\title{Twisted differential operators of negative level \\ and prismatic crystals}
\author{Michel Gros, Bernard Le Stum \& Adolfo Quir\'os
\thanks{Supported by grant PGC2018-0953092-B-I00 (MCTU/AEI/FEDER, UE).}
}
\date{Version of \today}
\newtheorem{thm}{Theorem}[section]
\newtheorem{prop}[thm] {Proposition}
\newtheorem{cor}[thm] {Corollary}
\newtheorem{lem}[thm] {Lemma}
\newtheorem{dfn}[thm] {Definition}
\newenvironment{xmp}[1][Example]{\begin{trivlist} \item[\hskip \labelsep {\bfseries #1}]}{\end{trivlist}}
\newenvironment{xmps}[1][Examples]{\begin{trivlist} \item[\hskip \labelsep {\bfseries #1}]}{\end{trivlist}}
\newenvironment{rmk}[1][Remark]{\begin{trivlist} \item[\hskip \labelsep {\bfseries #1}]}{\end{trivlist}}
\newenvironment{rmks}[1][Remarks]{\begin{trivlist} \item[\hskip \labelsep {\bfseries #1}]}{\end{trivlist}}
\begin{document}

\maketitle

\begin{abstract}
We introduce twisted differential calculus of negative level and prove a descent theorem:
Frobenius pullback provides an equivalence between finitely presented modules endowed with a topologically quasi-nilpotent twisted connection of level minus one and those of level zero.
We explain how this is related to the existence of a Cartier operator on prismatic crystals. For the sake of readability, we limit ourselves to the case of dimension one.
\end{abstract}

\tableofcontents

\section*{Introduction}
\addcontentsline{toc}{section}{Introduction}

Recently Barghav Bhatt and Peter Scholze have introduced in \cite{BhattScholze19} the beautiful notion of a prism.
It is built on the concept of a thickening which goes back to Alexander Grothendieck in its modern form.
A thickening of a ring $\overline B$ (always commutative) is simply another ring $B$ together with a surjective map $B \to \overline B$.
Alternatively, we may consider the couple $(B, J)$ where $J$ denotes the kernel of the surjection so that $\overline B = B/J$.
The idea behind any theory using this concept is to allow only some particular thickenings with some extra structure in order to get a better grasp on $\overline B$, which is actually the main object of study even if it sometimes disappears from the picture.
Grothendieck used infinitesimal thickenings ($J$ nilpotent) in order to develop calculus over $\overline B$ and Berthelot extended the theory to positive characteristic $p$ by using PD-thickenings (divided powers on $J$).

A \emph{prism} is a particular kind of a thickening.
For example, if we start from $\overline B = \mathbb Z_{p}[\zeta]$ where $\zeta$ is a primitive $p$th root of unity, we may consider the thickening $B = \mathbb Z_{p}[[q-1]]$ obtained by sending $q$ to $\zeta$.
The ideal $J$ is then generated by the $q$-analog $(p)_{q} = 1 + q + \cdots + q^{p-1}$ of $p$.
We will only consider here prisms over this base that we may call $(p)_{q}$-prisms and we will stick to \emph{bounded} prisms (a very light finiteness condition).
More precisely, a prism (in this context) is a $(p)_{q}$-torsion free $(p,q-1)$-adically complete $\mathbb Z_{p}[[q-1]]$-algebra $B$ endowed with a $\delta$-structure\footnote{This is simply a stable version of a frobenius lift.} satisfying $\delta(q) = 0$ (identifying $q$ with its image in $B$) and such that $\overline B = B/(p)_{q}B$ has bounded $p^\infty$-torsion.
An important example from $p$-adic Hodge theory is Fontaine's ring $A_{\mathrm{inf}} := W(\mathcal O_{\mathbb C_{p}}^\flat)$ with $q = [\underline \zeta]$ (Teichm\"uller lifting of a sequence of $p$th roots of $\zeta$) in which case $\overline {A_{\mathrm{inf}}} = \mathcal O_{\mathbb C_{p}}$.
This applies more generally to \emph{perfectoid} integral rings which then correspond to what are called \emph{perfect} prisms.

Putting all infinitesimal thickenings together provides the infinitesimal site on which crystals are defined.
Any crystal\footnote{We only consider crystals with quasi-coherent components.} (say on $X$ smooth over $S$) gives rise to a module endowed with an integrable connection (on $X/S$) and this defines an equivalence of categories.
Actually, in positive characteristic $p$, one uses PD-thickenings and obtain the crystalline site.
A similar pattern occurs with prisms.
In this article, we consider the following situation: we let $R$ be a $(p)_{q}$-prism and $A$ a $(p, q-1)$-adically complete $R$-algebra which admits a coordinate $x$.
There exists a unique $\delta$-structure on $A$ such that $\delta(x)=0$ and a unique endomorphism $\sigma$ such that $\sigma(x) = qx$.
Then, one may consider the prismatic site of $\overline A/R$ and we will show in section \ref{despr} that a prismatic crystal gives rise to an $A$-module endowed with what we call a \emph{twisted connection of level $-1$}.
In general, a twisted connection of level $-m$ on an $A$-module $M$ is an $R$-linear map satisfying the following Leibniz rule:
\[
\forall f \in A, \forall s \in M, \quad \nabla(fs) = (p^m)_{q} s \otimes \mathrm d_{q^{p^m}}f + \sigma^{p^m}(f)\nabla(s).
\]
We will denote the corresponding category by $\mathrm{MIC}_{q}^{(-m)}(A/R)$.
There exists also the notion of a $q$-PD-thickening giving rise to the $q$-crystalline site and we showed in \cite{GrosLeStumQuiros20} that, if $\mathfrak a$ is a $q$-PD-ideal in $A$, then a $q$-crystal on $\overline A/\mathfrak a$ gives rise to an $A$-module endowed with a twisted connection of level $0$.

The main result of the present article is the following: we prove (theorem \ref{mainthm} and corollary \ref{eqmod}) that, if $A'$ denotes the frobenius pullback of $A$, then frobenius descent provides an equivalence of categories between $A'$-modules endowed with a twisted connection of level $-1$ and $A$-modules endowed with a twisted connection of level $0$ (when we focus on finitely presented topologically quasi-nilpotent objects).
Moreover, as we expected in \cite{Gros20}, section 6,  this is related to prisms via a very general Cartier morphism $C$, whose definition is inspired by the  proof of theorem 16.17 of \cite{BhattScholze19},  from the $q$-crystalline topos of a smooth formal scheme $\mathcal X$ to the prismatic topos of its frobenius pull back $\mathcal X'$.
When $\mathcal X = \mathrm{Spf}(A/\mathfrak a)$, in which case $\mathcal X' = \mathrm{Spf}(A'/(p)_{q}A)$, there exists a commutative diagram
\[
\xymatrix{
\{(p)_{q}\mathrm{-prismatic}\ \mathrm{crystals}\ \mathrm{on}\ \mathcal X'/R\} \ar[r]^-{C^{-1}} \ar[d] & \{q\mathrm{-crystals}\ \mathrm{on}\ \mathcal X/R\} \ar[d] \\
\mathrm{MIC}_{q}^{(-1)}(A'/R) \ar[r]^{F^*} & \mathrm{MIC}_{q}^{(0)}(A/R).
}
\]
As explained above, the bottom map is an equivalence (on finitely presented topologically quasi-nilpotent objects) and, introducing flat topology considerations, we will prove in a forthcoming article that  this is also the case for the vertical maps.
Of course, this implies that the upper map is also an equivalence and we expect this to be true more generally for any smooth formal scheme $\mathcal X$.
This would be a twisted arithmetic version of the results obtained by Oyama in \cite{Oyama17} and Xu in \cite{Xu19}.

Let us briefly describe the content of this article.
In the first section, we introduce the ring of twisted divided polynomials of negative level and show in proposition \ref{del1} that it automatically inherits a $\delta$-structure in the case of level $-1$.
In the second section, we explain the basics of twisted calculus on an adic ring and give a fundamental example.
In section three, we develop  twisted calculus of negative level.
In section four, we introduce the level raising functor and show in theorem \ref{mainthm} that this is an equivalence when we move from level $-1$ to level $0$ (Frobenius descent).
In section five, we recall some basic notions on prisms and show in corollary \ref{prismenv} that the ring of twisted divided polynomials of level minus one is the prismatic envelope of the polynomial ring for the \emph{symmetric} $\delta$-structure.
In the last section, we explain Cartier descent from prismatic crystals to $q$-PD-crystals and we show in proposition \ref{comdia} that this is compatible with the raising level map of section four (which is an equivalence in this case).

\section{Twisted divided powers of negative level} \label{partial}

We recast here some results from \cite{GrosLeStumQuiros19} (see also section 2 of \cite{GrosLeStumQuiros20}) and extend them to negative level. 

We let $R$ be any commutative ring.
We fix some $q \in R$ and denote by $(n)_{q} \in R$ the $q$-analog of an $n \in \mathbb Z_{\geq 0}$ (see \cite{LeStumQuiros15} for example for a presentation of the theory of $q$-analogs).
We let $A$ be any commutative $R$-algebra and fix some $x \in A$.
We also fix a prime $p$ and some $m \in \mathbb Z_{\geq 0}$.

If $y \in A$ and $n \in \mathbb Z_{\geq 0}$, we will consider the \emph{twisted powers}
\begin{equation}\label{naivpow}
\quad \xi^{(n)_{q,y}} := \prod_{i=0}^{n-1}\left(\xi + (i)_{q}y\right) \in A[\xi].
\end{equation}
They form an alternative basis for the polynomial ring $A[\xi]$ as a free $A$-module.
Note however that multiplication of twisted powers is kind of tricky because
\[
\xi^{(n_1)_{q,y}} \xi^{(n_2)_{q,y}} = \sum_{i=0}^{\min{(n_1,n_2)}} (-1)^i (i)_{q}! q^{\frac{i(i-1)}2}{n_1 \choose i}_{q}{n_2 \choose i}_{q} y^{i}\xi^{(n_1+n_2-i)_{q,y}},
\]
as shown in lemma 1.2 of \cite{GrosLeStumQuiros19}.
We will need to understand how blowing-up and frobenius act on twisted powers and we can already notice the following:

\begin{lem} If $y,z \in A$, then the blowing up
\[
A[\xi] \to A[\omega], \quad \xi \mapsto z\omega
\]
sends $\xi^{(n)_{q,zy}}$ to $z^n\omega^{(n)_{q,y}}$.
\end{lem}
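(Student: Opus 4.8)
The plan is to unwind the definition \eqref{naivpow} and carry out the substitution factor by factor. Denote by $\varphi\colon A[\xi]\to A[\omega]$ the $A$-algebra homomorphism with $\varphi(\xi)=z\omega$. Then
\[
\varphi\bigl(\xi^{(n)_{q,zy}}\bigr)=\varphi\left(\prod_{i=0}^{n-1}\bigl(\xi+(i)_{q}zy\bigr)\right)=\prod_{i=0}^{n-1}\bigl(z\omega+(i)_{q}zy\bigr),
\]
using that $\varphi$ is a ring homomorphism and that the $q$-integers $(i)_{q}$, being elements of $R$, are fixed by $\varphi$. The key observation is that each of the $n$ factors on the right equals $z\bigl(\omega+(i)_{q}y\bigr)$; pulling the common scalar $z$ out of all of them yields $z^{n}\prod_{i=0}^{n-1}\bigl(\omega+(i)_{q}y\bigr)=z^{n}\omega^{(n)_{q,y}}$, which is precisely the claim.

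I do not expect any genuine obstacle here: once \eqref{naivpow} is written out, the statement reduces to the elementary factorization $z\omega+(i)_{q}zy=z(\omega+(i)_{q}y)$, and the only point requiring (minimal) care is to keep track of $A$-linearity, so that the coefficients $(i)_{q}$ and the scalar $z$ may be moved past $\varphi$ freely. The case $n=0$ is consistent with the empty-product convention (both sides equal $1$), and when $z$ is a unit the lemma asserts that $\varphi$ is an isomorphism matching the two families of twisted powers up to the rescaling by $z^{n}$; these are the forms in which the result will be reused when analysing how blowing-up and frobenius act on twisted powers.
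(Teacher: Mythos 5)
Your argument is correct and is essentially the paper's own one-line proof: apply the $A$-algebra map factor by factor and use $z\omega+(i)_{q}zy=z(\omega+(i)_{q}y)$ to pull out $z^{n}$. No issues.
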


\begin{proof}
We have $\prod_{i=0}^{n-1}\left(z\omega + (i)_{q}zy\right) = z^n \prod_{i=0}^{n-1}\left(\omega + (i)_{q}y\right).$
\end{proof}

Later, we will assume that $R$ is a $\delta$-ring (see section 1 of \cite{GrosLeStumQuiros20} for a short review) and in particular that $R$ is endowed with a lifting of frobenius given by $\phi(f) = f^p + p\delta(f)$.
We will also assume that $q$ has rank one which means that $\delta(q) = 0$ (so that $\phi(q) = q^p$).
We will denote\footnote{Later we will write $
A' := R {}_{{}_{\phi}\nwarrow}\!\!\widehat \otimes_{R} A$ but the topology is discrete here.} by
\[
A' := R {}_{{}_{\phi}\nwarrow}\!\!\otimes_{R} A
\]
the frobenius pullback\footnote{We always use the geometric vocabulary and say ``pullback'' instead of ``scalar extension''.} of $A$.
It is easy to see that the semilinear morphism of rings
\[
A \to A', \quad f \mapsto f' := 1 {}_{{}_{\phi}\nwarrow}\!\!\otimes f
\]
has the following effect on twisted polynomials:
\[
A[\xi] \to A'[\xi], \quad \xi^{(n)_{q,y}} \mapsto \xi^{(n)_{q^p, y'}}
\]
(with $y \in A$ and $y' = 1 {}_{{}_{\phi}\nwarrow}\!\!\otimes y$).

Back to the general situation, the next result will follow from section 2 in \cite{GrosLeStumQuiros19}:

\begin{prop}
Given $y \in A$, there exists a unique \emph{natural} multiplication on the free $A$-module $A\langle \xi \rangle_{q,y}$ with basis $\{\xi^{[n]_{q,y}}\}_{n \in \mathbb Z_{\geq 0}}$ turning the $A$-linear map
\[
A[\xi] \to A\langle \xi \rangle_{q,y}, \quad \xi^{(n)_{q,y}}\mapsto (n)_{q}! \xi^{[n]_{q,y}}
\]
into a morphism of rings.
\end{prop}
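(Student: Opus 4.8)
The plan is to reduce everything to one universal situation and then to build the ring by hand, as a subring of a localisation of the polynomial ring. Reading ``natural'' as compatibility with base change in the datum $(R,q,A,y)$, it is enough to treat the universal case $R_{0} := \mathbb Z[q]$, $A_{0} := \mathbb Z[q,y]$ with $y$ the canonical coordinate: for an arbitrary $(R,q,A,y)$ there is a unique ring morphism $A_{0} \to A$ sending $q$ and $y$ to the prescribed elements, and one then \emph{defines} $A\langle \xi\rangle_{q,y} := A \otimes_{A_{0}} A_{0}\langle \xi \rangle_{q,y}$, the map $A[\xi] \to A\langle \xi\rangle_{q,y}$ being the base change of the universal one, hence an $A$-algebra morphism with the announced effect on twisted powers. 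So I work over $A_{0}$.

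Over the domain $A_{0} = \mathbb Z[q,y]$ each $(n)_{q} = 1 + q + \dots + q^{n-1}$ is nonzero, hence a non-zero-divisor. Let $S$ be the localisation of $A_{0}$ obtained by inverting all the $(n)_{q}$ with $n \geq 1$, so that $A_{0} \hookrightarrow S \hookrightarrow S[\xi]$ and every $(n)_{q}!$ becomes invertible. Since the twisted powers $\{\xi^{(n)_{q,y}}\}_{n}$ form an $A_{0}$-basis of $A_{0}[\xi]$, the rescaled elements $\xi^{[n]_{q,y}} := \xi^{(n)_{q,y}}/(n)_{q}!$ form an $S$-basis of $S[\xi]$. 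I take $A_{0}\langle \xi\rangle_{q,y}$ to be the $A_{0}$-submodule of $S[\xi]$ they generate: it is $A_{0}$-free on $\{\xi^{[n]_{q,y}}\}_{n}$ (these being even $S$-linearly independent), it contains $1 = \xi^{[0]_{q,y}}$, and it contains $A_{0}[\xi]$ because $\xi^{(n)_{q,y}} = (n)_{q}!\,\xi^{[n]_{q,y}}$. Everything then reduces to showing that $A_{0}\langle \xi\rangle_{q,y}$ is stable under the multiplication of $S[\xi]$.

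This stability is the heart of the matter, and the step I expect to require genuine work. Starting from the product formula for twisted powers recalled above (lemma 1.2 of \cite{GrosLeStumQuiros19}), dividing by $(n_{1})_{q}!\,(n_{2})_{q}!$ and using $\xi^{(n_{1}+n_{2}-i)_{q,y}} = (n_{1}+n_{2}-i)_{q}!\,\xi^{[n_{1}+n_{2}-i]_{q,y}}$, one is led to
\[
\xi^{[n_{1}]_{q,y}}\,\xi^{[n_{2}]_{q,y}} = \sum_{i=0}^{\min(n_{1},n_{2})} (-1)^{i}\, q^{\frac{i(i-1)}{2}}\, c_{i}\, y^{i}\, \xi^{[n_{1}+n_{2}-i]_{q,y}},
\]
where
\[
c_{i} = \frac{(i)_{q}!\,(n_{1}+n_{2}-i)_{q}!\,\binom{n_{1}}{i}_{q}\binom{n_{2}}{i}_{q}}{(n_{1})_{q}!\,(n_{2})_{q}!}.
\]
Two applications of $\binom{n}{i}_{q} = (n)_{q}!\big/\bigl((i)_{q}!\,(n-i)_{q}!\bigr)$ collapse $c_{i}$ to the $q$-multinomial coefficient $(n_{1}+n_{2}-i)_{q}!\big/\bigl((n_{1}-i)_{q}!\,(n_{2}-i)_{q}!\,(i)_{q}!\bigr)$, which is a Gaussian polynomial in $q$ with non-negative integer coefficients. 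In particular every structure constant lies in $\mathbb Z[q,y] = A_{0}$, which is exactly the stability required.

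It follows that $A_{0}\langle \xi\rangle_{q,y}$ is a commutative subring of $S[\xi]$ containing $A_{0}[\xi]$, so the inclusion corestricts to an $A_{0}$-algebra morphism $A_{0}[\xi] \to A_{0}\langle \xi\rangle_{q,y}$ carrying $\xi^{(n)_{q,y}}$ to $(n)_{q}!\,\xi^{[n]_{q,y}}$; the resulting multiplication is natural by construction. For uniqueness, a natural multiplication, evaluated on the universal $A_{0}$, must satisfy that $(n_{1})_{q}!\,(n_{2})_{q}! \cdot \bigl(\xi^{[n_{1}]_{q,y}}\xi^{[n_{2}]_{q,y}}\bigr)$ equals the image of $\xi^{(n_{1})_{q,y}}\xi^{(n_{2})_{q,y}}$, an element of $A_{0}\langle\xi\rangle_{q,y} \subseteq S[\xi]$; since $(n_{1})_{q}!\,(n_{2})_{q}!$ is a non-zero-divisor there, the products $\xi^{[n_{1}]_{q,y}}\xi^{[n_{2}]_{q,y}}$ are forced, and base change then pins down the multiplication on every $A\langle\xi\rangle_{q,y}$.
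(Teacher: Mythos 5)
Your proof is correct, and it is essentially a self-contained version of what the paper outsources: the paper's own proof consists of citing propositions 2.1 and 2.2 of \cite{GrosLeStumQuiros19} for existence, and for uniqueness reducing to a polynomial ring in infinitely many variables over $\mathbb Z[q]$ and then to the fraction field $\mathbb Q(q)$, where the divided powers are literal divisions. You instead work over the single universal datum $\mathbb Z[q] \to \mathbb Z[q,y]$, realize $A_0\langle\xi\rangle_{q,y}$ concretely as the $A_0$-span of the rescaled twisted powers inside $S[\xi]$ (with the $(n)_q$ inverted), and prove stability under multiplication by simplifying the structure constants to the $q$-multinomial coefficients $(n_1+n_2-i)_q!/\bigl((i)_q!(n_1-i)_q!(n_2-i)_q!\bigr) \in \mathbb Z[q]$ --- which is precisely the integrality fact hidden in the cited propositions, and is consistent with the explicit product formula the paper records later for $A\langle\omega\rangle_{q(-m)}$. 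Your uniqueness step is also slightly leaner: a non-zero-divisor cancellation in the torsion-free universal module, rather than passing to $\mathbb Q(q)$, and a one-variable universal object rather than infinitely many variables (one variable suffices since only the element $y$, not all of $A$, enters the structure constants). What the paper's route buys is brevity and reuse of the earlier machinery; what yours buys is a standalone construction in which associativity and commutativity come for free from the embedding into $S[\xi]$, at the cost of redoing the $q$-binomial computation. One small point to make explicit if you polish this: the ``multiplication on the free $A$-module'' should be understood as an $A$-bilinear (i.e. $A$-algebra) multiplication, which is what your structure-constant formulation tacitly assumes and what makes the base-change and uniqueness-transfer steps legitimate.
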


\begin{proof}
Since this is the first time that we use the term ``natural'', we should make it precise (and leave it to the reader's imagination in the future):
it means that if we are given a commutative diagram of commutative rings
\[
\xymatrix{
q_{1} \ar@{|->}[d] & R_{1} \ar[rr] \ar[d] && A_{1} \ar[d] & y_{1} \ar@{|->}[d] \\
q_{2} & R_{2} \ar[rr] && A_{2}& y_{2},
}
\]
then the diagram
\[
\xymatrix{
\xi \ar@{|->}[d] & A_{1}[\xi] \ar[rr] \ar[d] && A_{1}\langle \xi \rangle_{q_{1},y_{1}} \ar[d] & \xi^{[n]_{q_{1},y_{1}}} \ar@{|->}[d] \\
\xi & A_{2}[\xi] \ar[rr] && A_{2}\langle \xi \rangle_{q_{2},y_{2}} & \xi^{[n]_{q_{2},y_{2}}}
}
\]
is also commutative.
Let us now prove our assertion.
Existence follows from propositions 2.2 and 2.1 in \cite{GrosLeStumQuiros19}.
For uniqueness, we may first replace $A$ with some polynomial ring (in an infinite number of variables) over $\mathbb Z[q]$, then replace $\mathbb Z[q]$ with its fraction field $\mathbb Q(q)$ and finally rely on proposition 2.1 of \cite{GrosLeStumQuiros19} again.
\end{proof}

\begin{dfn} \label{zerol}
The ring $A\langle \xi \rangle_{q,y}$ is the ring of \emph{twisted divided polynomials}.
\end{dfn}

Note that, when $(n)_{q}!$ is invertible in $R$, we have
\[
\xi^{[n]_{q,y}} = \frac {\xi^{(n)_{q,y}}} {(n)_{q}!}
\]
so that $\xi^{[n]_{q,y}}$ deserves the name of a \emph{twisted divided power}.

We shall now show that blowing up and frobenius both extend to \emph{twisted} divided powers.

\begin{lem} \label{bloex}
If $y,z \in A$, then the blowing up $\xi \mapsto z\omega$ extends naturally in a unique way to a morphism of $A$-algebras
\[
A\langle\xi\rangle_{q,zy} \to A\langle\omega\rangle_{q,y}, \quad \xi^{[n]_{q,zy}} \mapsto z^n\omega^{[n]_{q,y}}.
\]
\end{lem}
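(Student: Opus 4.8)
The plan is to transport the already-established blowing-up map on twisted \emph{powers} (the Lemma just before Definition~\ref{zerol}) through the surjections from the polynomial rings onto the divided-power rings, and check that it descends. Concretely, consider the solid square
\[
\xymatrix{
A[\xi] \ar[r] \ar[d] & A[\omega] \ar[d] \\
A\langle\xi\rangle_{q,zy} \ar@{-->}[r] & A\langle\omega\rangle_{q,y},
}
\]
where the top horizontal map is $\xi\mapsto z\omega$, the left vertical map sends $\xi^{(n)_{q,zy}}$ to $(n)_{q}!\,\xi^{[n]_{q,zy}}$, and the right vertical map sends $\omega^{(n)_{q,y}}$ to $(n)_{q}!\,\omega^{[n]_{q,y}}$. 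By the first Lemma of the section the top map sends $\xi^{(n)_{q,zy}}$ to $z^n\omega^{(n)_{q,y}}$, so going around the square one gets $(n)_{q}!\,z^n\omega^{[n]_{q,y}}$; thus the prescription $\xi^{[n]_{q,zy}}\mapsto z^n\omega^{[n]_{q,y}}$ is exactly what is forced if the square is to commute, which already gives uniqueness of the dashed arrow as a map of $A$-modules once we know it is a ring map (the left vertical arrow has dense image after inverting the $(n)_{q}!$, so a ring homomorphism out of $A\langle\xi\rangle_{q,zy}$ is determined by the composite with it).

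For existence I would invoke naturality of the divided-power construction, exactly as in the preceding Proposition. The blowing-up $A[\xi]\to A[\omega]$, $\xi\mapsto z\omega$, is not quite a morphism in the shape of diagram used there (it is not the identity on the variable), but one can factor it: first the isomorphism-of-data observation that $A\langle\xi\rangle_{q,zy}$ and $A\langle z\omega\rangle_{q,zy}$ are canonically identified (rename the variable), and then note that the target $A\langle\omega\rangle_{q,y}$ receives $A\langle z\omega\rangle_{q,zy}$ because $(z\omega)^{(n)_{q,zy}} = z^n\omega^{(n)_{q,y}}$ is divisible in a coherent way — i.e. the $A$-linear map $z\omega\mapsto z\omega$ between the two polynomial rings carries the lattice spanned by the $(n)_{q}!$-multiples compatibly. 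The cleanest route is the same reduction-to-the-universal-case trick used for the previous Proposition: replace $A$ by a polynomial ring over $\mathbb Z[q]$ in variables including $z$ and the coefficients of $y$, reduce to $\mathbb Q(q)$ where all $(n)_{q}!$ are invertible, there the formula $\xi^{[n]_{q,zy}} = \xi^{(n)_{q,zy}}/(n)_{q}!$ holds and the asserted map is literally the blowing-up divided by the relevant factorials, which one checks lands in the $\mathbb Z[q]$-form; then specialize back.

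The one point that needs genuine care — the main obstacle — is verifying that the map so defined is multiplicative, i.e. respects the tricky product formula for twisted divided powers. Over $\mathbb Q(q)$ this is automatic since everything is a localization of the polynomial ring and the map is induced by a ring map there; the content is that the integrality is preserved, i.e. that $z^n\omega^{[n]_{q,y}}$ really is an $A$-combination of the basis $\{\omega^{[k]_{q,y}}\}$ with the right multiplicativity, which the universal-case argument handles because $A\langle\omega\rangle_{q,y}$ is by construction the $\mathbb Z[q]$-subalgebra generated by the $\omega^{[k]}$. So after the reduction there is nothing left to compute: multiplicativity, $A$-linearity and the explicit effect on basis elements all come for free from the generic fibre, and naturality (in the precise sense spelled out in the previous proof) is inherited from that of the level-zero construction. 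I would therefore present the proof as: (1) reduce to the universal case over $\mathbb Q(q)$; (2) there, divide the blowing-up of twisted powers by factorials and observe it preserves the integral form; (3) specialize and note uniqueness and naturality follow from density of the image of $A[\xi]$.
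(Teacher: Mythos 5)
Your proposal is correct and takes essentially the same route as the paper: the formula is first observed to define an $A$-linear extension of the blowing up, and both multiplicativity and uniqueness are then checked by the naturality reduction to the universal case (a polynomial ring over $\mathbb Z[q]$, then $\mathbb Q(q)$), where the twisted divided powers are literally $\xi^{(n)_{q,zy}}/(n)_{q}!$ and the claim is trivial. The only loose phrase is the appeal to a ``dense image'' of $A[\xi]$ (there is no topology in play, and the image is not dense in any useful sense unless the $(n)_{q}!$ are inverted); uniqueness really rests on the naturality reduction, which you also invoke, so this is a matter of wording rather than a gap.
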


\begin{proof}
First of all, the formula defines an $A$-linear map that extends the original blowing up.
In order to show that it is a morphism of rings, we may actually assume that $R = \mathbb Q(q)$ in which case the question becomes trivial.
The same argument can be used to show uniqueness.
\end{proof}

On proves exactly in the same way the following:

\begin{lem} \label{mid}
If $R$ is a $\delta$-ring with $\delta(q) = 0$ and $A'$ denotes the frobenius pullback of $A$, then there exists a unique natural semilinear morphism of rings
\[
A \langle \xi \rangle_{q,y} \to A' \langle \xi \rangle_{q^p,y'}, \quad \xi^{[n]_{q,y}} \mapsto \xi^{[n]_{q^p,y'}}
\]
(with $y \in A$ and $y' = 1 {}_{{}_{\phi}\nwarrow}\!\!\otimes y$).\qed
\end{lem}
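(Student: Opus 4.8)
The plan is to follow verbatim the pattern used for Lemma~\ref{bloex}, since the map to be produced is once more an $A$-semilinear map between free $A$-modules that extends a morphism of rings already known on twisted powers, the only genuinely new feature being that the base ring now carries a $\delta$-structure. First I would note that, as $\{\xi^{[n]_{q,y}}\}_{n}$ is an $A$-basis of $A\langle\xi\rangle_{q,y}$ and $\{\xi^{[n]_{q^p,y'}}\}_{n}$ an $A'$-basis of $A'\langle\xi\rangle_{q^p,y'}$, the prescription $\xi^{[n]_{q,y}}\mapsto\xi^{[n]_{q^p,y'}}$ does define an $A$-semilinear map $\Phi$ of modules, which moreover fits, through the canonical ring maps $A[\xi]\to A\langle\xi\rangle_{q,y}$ and $A'[\xi]\to A'\langle\xi\rangle_{q^p,y'}$, into a square with the semilinear morphism of rings $A[\xi]\to A'[\xi]$, $\xi^{(n)_{q,y}}\mapsto\xi^{(n)_{q^p,y'}}$ recalled above. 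Commutativity of that square is checked on the basis element $\xi^{(n)_{q,y}}=(n)_{q}!\,\xi^{[n]_{q,y}}$: both ways around send it to $(n)_{q^p}!\,\xi^{[n]_{q^p,y'}}$, the point being that $\phi(q)=q^p$ forces $\phi((n)_{q}!)=(n)_{q^p}!$, hence that the image of $(n)_{q}!$ in $A'$ is $(n)_{q^p}!$.

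Next I would prove that $\Phi$ respects multiplication by reducing to a universal situation, exactly as in the proof of Lemma~\ref{bloex} and in the uniqueness part of the proposition defining $A\langle\xi\rangle_{q,y}$: multiplicativity of $\Phi$ amounts to a family of identities among the structure constants of the twisted divided power algebra, and by naturality those constants are universal polynomials in $q$ and $y$ with integer coefficients. It therefore suffices to treat the case $R=\mathbb{Q}(q)$, $A=\mathbb{Q}(q)[y]$; over $\mathbb{Q}(q)$ all the factorials $(n)_{q}!$ and $(n)_{q^p}!$ are invertible, so the two canonical maps appearing in the square above are isomorphisms of rings, and $\Phi$ becomes conjugate to the semilinear morphism of rings on twisted powers and is thus itself a morphism of rings. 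Uniqueness --- and naturality --- follow from the same reduction, a natural semilinear ring map being in any case pinned down by its values on the basis $\{\xi^{[n]_{q,y}}\}_{n}$.

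I do not expect a deep obstacle here; the content is bookkeeping, and the statement really is proved ``exactly in the same way'' as Lemma~\ref{bloex}. The one step needing a little more care than in that lemma, where no $\delta$-structure was in play, is legitimising the reduction to $\mathbb{Q}(q)$: one must observe that $\mathbb{Q}(q)$ is a $\delta$-ring with $\delta(q)=0$ (being a $\mathbb{Q}$-algebra, it is a $\delta$-ring for the Frobenius lift $q\mapsto q^{p}$, with $\delta=\tfrac1p(\phi-(\,\cdot\,)^{p})$, which kills $q$), and that $\mathbb{Z}[q]\hookrightarrow\mathbb{Q}(q)$ is a homomorphism of $\delta$-rings, so that forming the frobenius pullback is compatible with this base change. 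The only other spot to watch is the commutativity of the square in the first step, i.e. that the factorials $(n)_{q}!$ and $(n)_{q^p}!$ genuinely match under $A\to A'$; everything else is routine.
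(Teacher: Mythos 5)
Your argument is correct and is essentially the paper's own: the paper proves Lemma \ref{mid} ``exactly in the same way'' as Lemma \ref{bloex}, namely by noting the formula defines a semilinear map extending the known morphism on twisted powers and then reducing multiplicativity and uniqueness to the universal case $R=\mathbb Q(q)$, where all $q$-factorials are invertible and the claim is trivial. Your extra remarks --- that $\phi((n)_q!)=(n)_{q^p}!$ makes the square with $A[\xi]\to A'[\xi]$ commute, and that $\mathbb Q(q)$ carries a $\delta$-structure with $\delta(q)=0$ so the reduction is legitimate --- are exactly the bookkeeping the paper leaves implicit.
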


Note that we actually obtain an isomorphism
\[
R {}_{{}_{\phi}\nwarrow}\!\!\otimes_{R} A \langle \xi \rangle_{q,y} \simeq A' \langle \xi \rangle_{q^p,y'}.
\]

Recall that we fixed some parameter $x \in A$ (that we haven't used yet).
We will then set $y := (1-q)x$ and simply write $\xi^{(n)_{q}}$ for the twisted powers so that
\[
\quad \xi^{(n)_{q}} := \prod_{i=0}^{n-1}\left(\xi + (1-q^i)x\right) \in A[\xi].
\]
In this situation, we will also denote by $A\langle \xi \rangle_{q}$ the ring of twisted divided polynomials and write $\xi^{[n]_{q}}$ for the corresponding twisted divided powers.

We can replace $q$ with $q^{p^m}$ in order to obtain a new ring $A\langle \xi \rangle_{q^{p^m}}$ but $y$ needs then to be replaced by $(1-q^{p^m})x= (p^m)_qy$ in order to fit the pattern (for fixed $x$) so that
\[
A\langle \xi \rangle_{q} := A\langle \xi \rangle_{q,y} \quad \mathrm{and} \quad A\langle \xi \rangle_{q^{p^m}} := A\langle \xi \rangle_{q^{p^m},(p^m)_{q}y}.
\]
The new ingredient in our story is the ring obtained by changing $q$ to $q^{p^m}$ but keeping the same $y$ (in which case we'd use $\omega$ rather than $\xi$ as indeterminate):

\begin{dfn} \label{pardef}
Given  $m \in \mathbb Z_{\geq 0}$, the ring of \emph{twisted divided polynomials of level $-m$ with respect to $p$} is
\[
A\langle \omega \rangle_{q(-m)} := A\langle \omega \rangle_{q^{p^m},y}
\]
with $y := (1-q)x$.
\end{dfn}

We will then write $\omega^{\{n\}_{q}} := \omega^{[n]_{q^{p^m},y}}$ and call them \emph{twisted divided powers of level $-m$}.

\begin{rmks}
\begin{enumerate}
\item
The ring of twisted divided polynomials of level $0$ is the same thing as the ring of twisted divided polynomials of definition \ref{zerol} : $A\langle \omega \rangle_{q(0)} = A\langle \xi \rangle_{q}$ (we will usually keep $\xi$ as indeterminate in this situation).
\item
If we write explicitly the multiplication on $A\langle \omega \rangle_{q(-m)}$, we get:
\begin{align*}
\omega^{\{ n_{1} \}_{q}} \omega^{\{ n_{2} \}_{q}} = \sum_{0\leq i \leq \min\{n_{1},n_{2}\}} q^{\frac{p^mi(i-1)}2}{n_{1} + n_{2} -i \choose n_{1}}_{q^{p^m}}{n_{1} \choose i}_{q^{p^m}} (q-1)^ix^i\omega^{ \{ n_{1}+n_{2}-i \}_{q} }.
\end{align*}
\end{enumerate}
\end{rmks}

We will now study blowing up and frobenius in this specific context.

\begin{prop} \label{propbl} The blowing up $\xi \mapsto (p^m)_{q} \omega$ extends uniquely to a natural morphism of $A$-algebras
\[
A\langle \xi \rangle_{q^{p^m}} \to A\langle \omega \rangle_{q(-m)}, \quad \xi^{[n]_{q^{p^m}}}\mapsto (p^m)_{q}^n \omega^{\{n\}_{q}}.
\]
\end{prop}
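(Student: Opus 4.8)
The plan is to recognize this proposition as a direct instance of Lemma~\ref{bloex}, once the normalization conventions are unwound. Recall that $A\langle \xi \rangle_{q^{p^m}}$ is, by convention, an abbreviation for $A\langle \xi \rangle_{q^{p^m},(p^m)_{q}y}$ with $y := (1-q)x$, whereas $A\langle \omega \rangle_{q(-m)} := A\langle \omega \rangle_{q^{p^m},y}$ by Definition~\ref{pardef}. I would therefore apply Lemma~\ref{bloex} after replacing $q$ by $q^{p^m}$ and choosing $z := (p^m)_{q}$: with these choices the element ``$zy$'' appearing there equals $(p^m)_{q}y$, so the source of the morphism produced by the lemma is exactly $A\langle \xi \rangle_{q^{p^m}}$, its target is $A\langle \omega \rangle_{q(-m)}$, the underlying blowing up $\xi \mapsto z\omega$ becomes $\xi \mapsto (p^m)_{q}\omega$, and the defining formula $\xi^{[n]_{q^{p^m},zy}} \mapsto z^{n}\omega^{[n]_{q^{p^m},y}}$ reads $\xi^{[n]_{q^{p^m}}} \mapsto (p^m)_{q}^{n}\omega^{\{n\}_{q}}$ in the notation of Definition~\ref{pardef}. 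Existence of a natural morphism of $A$-algebras extending the blowing up, as well as its uniqueness, then follow immediately.

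For the reader's benefit I would also recall, as in the proof of Lemma~\ref{bloex}, why this works: on polynomial rings the first lemma of this section gives the blowing up $A[\xi] \to A[\omega]$, $\xi \mapsto (p^m)_{q}\omega$, which sends $\xi^{(n)_{q^{p^m}}} = \prod_{i=0}^{n-1}\bigl(\xi + (i)_{q^{p^m}}(p^m)_{q}y\bigr)$ to $(p^m)_{q}^{n}\prod_{i=0}^{n-1}\bigl(\omega + (i)_{q^{p^m}}y\bigr)$; this polynomial map is compatible with the two canonical surjections onto the rings of twisted divided polynomials, hence descends to the $A$-linear map of the statement, and to see that the latter is a ring homomorphism (and that it is the only such extension) one reduces in the usual way to $R = \mathbb{Q}(q)$, where $\xi^{[n]_{q^{p^m}}} = \xi^{(n)_{q^{p^m}}}/(n)_{q^{p^m}}!$ and $\omega^{\{n\}_{q}} = \omega^{(n)_{q^{p^m},y}}/(n)_{q^{p^m}}!$, so that the assertion is forced by its polynomial-ring counterpart.

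There is no genuine difficulty here; the only point demanding attention is the bookkeeping of which $y$-parameter is attached to which ring — the source $A\langle \xi \rangle_{q^{p^m}}$ carrying the rescaled parameter $(p^m)_{q}y$ and the target $A\langle \omega \rangle_{q(-m)}$ carrying $y$ — since it is exactly this discrepancy, absorbed into the factor $z = (p^m)_{q}$, that makes Lemma~\ref{bloex} directly applicable.
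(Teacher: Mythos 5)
Your proof is correct and is exactly the paper's argument: the paper's entire proof is ``Apply lemma \ref{bloex} with $q$ replaced by $q^{p^m}$ and $z$ replaced by $(p^m)_{q}$,'' which is precisely your reduction, with the parameter bookkeeping $(p^m)_{q}y$ versus $y$ made explicit. The extra paragraph merely restates the proof of Lemma \ref{bloex} itself and adds nothing new, but it is accurate.
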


\begin{proof}
Apply lemma \ref{bloex} with $q$ replaced by $q^{p^m}$ and $z$ replaced by $(p^m)_{q}$.
\end{proof}

\begin{rmks}
\begin{enumerate}
\item
When $(p^m)_{q}$ (resp. and $(n)_{q^{p^m}}!$) is invertible in $R$, we may identify both rings in the proposition and we have
\[
\omega^{\{n\}_{q}} = \frac {\xi^{[n]_{q^{p^m}}}} {(p^m)_{q}^n} \quad \left(\mathrm{resp.}\ = \frac {\xi^{(n)_{q^{p^m}}}} {(p^m)_{q}^n(n)_{q^{p^m}}!}\right)
\]
In particular, we will have $\omega = \xi/(p^m)_{q}$ and this is why it is important to use different letters.
\item 
More generally, there exists a natural morphism of $A$-algebras
\[
A\langle \omega \rangle_{q^{p^r}(-m+r)}\to A\langle \omega \rangle_{q(-m)}, \quad \omega^{\{n\}_{q^{p^r}} }\mapsto (p^{m-r})_{q}^n \omega^{\{n\}_{q}}
\]
whenever $r \geq m$.
\item 
As a consequence, if $R$ is a $\delta$-ring with $\delta(q) = 0$, then there exists a unique natural semilinear morphism of rings
\[
A \langle \omega \rangle_{q(-m+1)}\to A' \langle \omega \rangle_{q(-m)}, \quad \omega^{\{n\}_{q}}\mapsto (p)_{q}^n\omega^{\{n\}_{q}}.
\]
 (using $x':= 1 {}_{{}_{\phi}\nwarrow}\!\!\otimes x \in A' := R {}_{{}_{\phi}\nwarrow}\!\!\otimes_{R} A$ as fixed parameter on the right hand side).
\end{enumerate}
\end{rmks}

Assume $R$ is a $\delta$-ring and $A$ is a $\delta$-$R$-algebra with both $q$ and $x$ of rank one.

\begin{dfn}
The \emph{symmetric $\delta$-structure} on $A[\xi]$ is defined by
\[
\delta(\xi) = \sum_{i=1}^{p-1} \frac 1p {p \choose i} x^{p-i}\xi^{i}.
\]
\end{dfn}

It is easy to check that this is the unique $\delta$-structure that extends the $\delta$-structure of $A$ and such that $x+\xi$ also has rank one.
We will also consider the corresponding \emph{relative} frobenius
\[
F : A'[\xi] = R {}_{{}_{\phi}\nwarrow}\!\!\otimes_{R} A[\xi] \to A[\xi]
\]
which is then given by $F(\xi) = (x + \xi)^p-x^p$.

We focus up to the end of the section on the case $m=1$.

\begin{thm} \label{propdivf}
Assume $R$ is a $\delta$-ring and $A$ is a $\delta$-$R$-algebra with both $q$ and $x$ of rank one.
Then the relative frobenius $F : A'[\xi] \to A[\xi]$ extends uniquely (through blowing up) to a \emph{natural} morphism
\[
[F] : A'\langle \omega \rangle_{q(-1)} \to A\langle \xi \rangle_{q}.
\]
\end{thm}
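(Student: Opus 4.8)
The plan is to mimic the strategy already used several times in this section: verify the statement after reducing to a universal situation where everything is invertible, so that all the divided-power rings coincide with honest localizations of polynomial rings and the morphism is forced by the formula. First I would make the naturality claim precise as in the proof of Proposition following Definition \ref{zerol}, and observe that by the usual argument it suffices to construct $[F]$; uniqueness is then automatic because $A'\langle\omega\rangle_{q(-1)}$ is generated as an $A'$-algebra by the $\omega^{\{n\}_q}$, and these must go to the images dictated by compatibility with blowing up, i.e. $(p)_q^n\,[F](\omega^{\{n\}_q})$ must equal the image of $\xi^{[n]_{q^p}}$ under $A'\langle\xi\rangle_{q^p}\to A\langle\xi\rangle_q$ obtained from the relative frobenius.

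The heart of the argument is the construction. I would first treat the ``generic'' case: replace $A$ by a polynomial ring in two variables $x, t$ over $\mathbb Z[q]$ (with the obvious $\delta$-structure making $q$ and $x$ of rank one), and then invert $p$ and all the $(n)_{q^p}!$ and $(p)_q$. Over this ring $A\langle\xi\rangle_q = A[\xi/(n)_q!]$, $A'\langle\xi\rangle_{q^p}$ and $A'\langle\omega\rangle_{q(-1)} = A'[\omega]$ with $\omega = \xi/(p)_q$ are all just polynomial (or localized polynomial) rings in one variable, and the relative frobenius $F:A'[\xi]\to A[\xi]$, $F(\xi) = (x+\xi)^p - x^p$, already exists on the nose. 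Factoring $F(\xi) = (x+\xi)^p - x^p = \prod_{i=0}^{p-1}\big((x+\xi) - \zeta^i x\big)$ over a suitable extension is not available integrally, but the key combinatorial fact I would isolate is that $F$ sends $\xi^{(n)_{q^p,y'}}$ (with $y' = (1-q^p)x$) into the $A$-submodule generated by the $\xi^{(m)_{q,y}}$, $m \le pn$, with coefficients divisible by the expected power of $(p)_q$; this is exactly the sort of identity established in section 2 of \cite{GrosLeStumQuiros19} and in \cite{GrosLeStumQuiros20}, and it is what forces $[F](\omega^{\{n\}_q})$ to land in $A\langle\xi\rangle_q$ after dividing by $(p)_q^n$. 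Once the formula for $[F]$ is pinned down over the generic ring, naturality of all the blowing-up maps (Lemma \ref{bloex}, Proposition \ref{propbl}) lets me specialize $x, t$ and the base to recover a well-defined $A$-linear map $[F]:A'\langle\omega\rangle_{q(-1)}\to A\langle\xi\rangle_q$ for arbitrary $R, A, x$; that it is a ring homomorphism, and the unique natural one extending $F$, again follows by reduction to the generic case where it is a tautology.

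The main obstacle is precisely the integrality/divisibility statement: showing that after the blowing-up $\xi\mapsto(p)_q\omega$ the image of $F$ really does extend to the divided-power ring of level $-1$, i.e. that no denominators beyond the prescribed $(p)_q^n$ appear. This is where the specific shape of the symmetric $\delta$-structure and of $F(\xi) = (x+\xi)^p - x^p$ must be used — one needs that the relative frobenius is compatible with the twisted structure in a way that matches the $q$-binomial identities governing multiplication in $A\langle\omega\rangle_{q(-1)}$ (the displayed product formula in the Remarks after Definition \ref{pardef}). I would extract this as a lemma on twisted powers, proving it either by a direct $q$-binomial computation or, more cleanly, by exploiting that $A\langle\omega\rangle_{q(-1)}$ is characterized by a universal property and checking $F$ factors through it; the rest of the proof is then the routine ``reduce to $\mathbb Q(q)$ and invert everything'' machinery already deployed for Lemmas \ref{bloex} and \ref{mid}.
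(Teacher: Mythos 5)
You have the right skeleton --- make ``natural'' precise, reduce to a generic ring where $p$, $(p)_q$ and the $(n)_{q^p}!$ are invertible so that both sides become (localized) polynomial rings, observe that $(p)_q^n[F](\omega^{\{n\}_q})$ is forced by compatibility with blowing up, then specialize --- and this matches the frame of the paper's argument. But the entire content of the theorem is the divisibility that you yourself call ``the main obstacle'' and then leave unproved. Concretely, one must show that if $F(\xi^{(n)_{q^p}})=\sum_i a_{n,i}x^{pn-i}\xi^{(i)_{q}}$ with $a_{n,i}=\sum_{j=0}^n(-1)^{n-j}q^{p(n-j)(n-j-1)/2}{n\choose j}_{q^p}{pj\choose i}_{q}$, then $b_{n,i}:=\frac{(i)_q!}{(n)_{q^p}!\,(p)_q^n}\,a_{n,i}$ lies in $R$ (and that the sum runs only over $i\geq n$), so that $[F](\omega^{\{n\}_q})=\sum_{i=n}^{pn}b_{n,i}x^{pn-i}\xi^{[i]_q}$ is integrally defined. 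This is a genuinely delicate $q$-binomial statement in which the $q$-factorials matter, not merely ``coefficients divisible by the expected power of $(p)_q$'', and it is not established in section 2 of \cite{GrosLeStumQuiros19} (those results only construct the multiplication on twisted divided polynomials); it is the object of section 7 of that reference, which is exactly what the paper's proof invokes together with the explicit formula above. As written, your proposal asserts, rather than proves or correctly locates, the one nontrivial fact.

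Your proposed ``cleaner'' alternative --- obtaining $[F]$ from a universal property of $A'\langle\omega\rangle_{q(-1)}$ --- is circular at this stage of the paper: the identification of $A'\langle\omega\rangle_{q(-1)}$ as the $(p)_q$-envelope of $(A'[\xi],\xi)$ is Theorem \ref{pqenv}, whose proof uses the $\delta$-structure of Proposition \ref{del1}, itself built from the very coefficients $b_{n,i}$; and the $\delta$-structure on the target $A\langle\xi\rangle_q$ is deduced in this paper as a corollary of $[F]$. Even granting those structures from prior work, the universal property would still require checking that $F$ is a morphism of $\delta$-rings with $F(\xi)=(x+\xi)^p-x^p\in(p)_qA\langle\xi\rangle_q$, which is already the $n=1$ instance of the divisibility you are trying to bypass. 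So either route ends up needing the explicit computation (or a precise citation of it), and that is what is missing from your proposal.
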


\begin{proof}
We have to prove that there exists a unique natural morphism making the diagram
\[
\xymatrix{
\xi \ar@{|->}[d] & A'[\xi] \ar[rr]^{F} \ar[d] && A[\xi] \ar[d] & \xi \ar@{|->}[d] \\
(p)_{q}\omega & A'\langle \omega \rangle_{q(-1)} \ar@{-->}[rr]^{[F]} && A\langle \xi \rangle_{q} & \xi.
}
\]
commutative.
This requires some work but this is done in section 7 of \cite{GrosLeStumQuiros19} where we showed that $[F]$ is explicitly given by
\[
\omega^{\{n\}} \mapsto \sum_{i=n}^{pn} b_{n,i}x^{pn-i}\xi^{[i]}
\]
with
\begin{equation} \label{bn}
b_{n,i} := \frac {(i)_{q}!}{(n)_{q^p}!(p)_{q}^n}a_{n,i} \in R
\end{equation}
and
\begin{equation}\label{an}
a_{n,i} := \sum_{j=0}^n (-1)^{n-j} q^{\frac {p(n-j)(n-j-1)}2} {n \choose j}_{q^p} {pj \choose i}_{q}. \qedhere
\end{equation}
\end{proof}

\begin{dfn} \label{divfrob}
The morphism $[F]$ is the \emph{divided frobenius}.
\end{dfn}

\begin{cor}
The symmetric $\delta$-structure of $A[\xi]$ extends uniquely in a natural way to $A\langle \xi \rangle_{q}$.
\end{cor}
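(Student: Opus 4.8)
The plan is to realise the Frobenius of the sought $\delta$-structure as a composite of the divided Frobenius with a level-raising map, to identify it with an extension of the symmetric Frobenius by a short diagram chase, and then to verify that it genuinely underlies a $\delta$-structure through a naturality reduction to a $p$-torsion free case, finished off by a localisation argument. First I would build the Frobenius. The symmetric $\delta$-structure on $A[\xi]$ is nothing but its Frobenius $\phi_{A[\xi]}$, and the relative Frobenius $F$ of Theorem~\ref{propdivf} factors as $\phi_{A[\xi]}$ precomposed with the canonical $\phi$-semilinear map $A[\xi]\to A'[\xi]$, $b\mapsto 1\,{}_{{}_{\phi}\nwarrow}\!\!\otimes b$. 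I would then put
\[
\Phi:=[F]\circ\iota\colon A\langle\xi\rangle_{q}\longrightarrow A\langle\xi\rangle_{q},
\]
where $[F]\colon A'\langle\omega\rangle_{q(-1)}\to A\langle\xi\rangle_{q}$ is the divided Frobenius and $\iota\colon A\langle\xi\rangle_{q}=A\langle\omega\rangle_{q(0)}\to A'\langle\omega\rangle_{q(-1)}$, $\omega^{\{n\}_{q}}\mapsto(p)_{q}^{n}\omega^{\{n\}_{q}}$, is the case $m=1$ of the natural semilinear morphism of the third remark after Proposition~\ref{propbl}; this is a $\phi$-semilinear ring endomorphism of $A\langle\xi\rangle_{q}$.

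Next I would check that $\Phi$ extends $\phi_{A[\xi]}$ along the canonical map $A[\xi]\to A\langle\xi\rangle_{q}$. It is enough to observe that the blowing up $\xi\mapsto(p)_{q}\omega$, from $A'[\xi]$ to $A'\langle\omega\rangle_{q(-1)}$, sits in a commutative square together with $\iota$, the canonical map $A[\xi]\to A\langle\xi\rangle_{q}$ and the canonical semilinear map $A[\xi]\to A'[\xi]$ — the two composites are $\phi$-semilinear ring homomorphisms that agree on $A$ and on $\xi$, hence everywhere — and then to paste this square onto the commutative square of Theorem~\ref{propdivf}, using the factorisation $\phi_{A[\xi]}=F\circ(b\mapsto 1\otimes b)$.

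The substantive part is to upgrade $\Phi$ to a $\delta$-structure, i.e.\ to prove that $\delta(c):=\tfrac1p\bigl(\Phi(c)-c^{p}\bigr)$ lies in $A\langle\xi\rangle_{q}$ for every $c$ (the $\delta$-identities then holding because they hold after inverting $p$). Everything used is natural in the datum $(R,A,q,x)$, and $A\langle\xi\rangle_{q}=A\otimes_{\mathbb{Z}[q][x]}\mathbb{Z}[q][x]\langle\xi\rangle_{q}$ as rings, so a $\delta$-structure on $\mathbb{Z}[q][x]\langle\xi\rangle_{q}$ over $\mathbb{Z}[q]$ yields one on $A\langle\xi\rangle_{q}$ over $R$ by base change; hence one may assume $R=\mathbb{Z}[q]$ and $A=\mathbb{Z}[q][x]$, in which case $A\langle\xi\rangle_{q}$ is free over $\mathbb{Z}[q][x]$, hence $p$-torsion free, and it suffices to know that $\Phi$ lifts the $p$-power map modulo $p$. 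To see this I would invert all the $(n)_{q}$: over the $p$-torsion free $\delta$-ring $S:=\mathbb{Z}[q,(n)_{q}^{-1}:n\geq 1]$ the $q$-factorials become units, so the canonical map identifies $S[x]\langle\xi\rangle_{q}$ with the polynomial ring $S[x][\xi]$, and under this identification $\Phi=[F]\circ\iota$ becomes the Frobenius of the symmetric $\delta$-structure (again by the diagram chase above, now over $S$, where the canonical map is an isomorphism), which is a lift of the $p$-power map. Since $A\langle\xi\rangle_{q}\hookrightarrow S[x]\langle\xi\rangle_{q}$ is injective and compatible with $\Phi$, it remains only to check that $p\,S[x]\langle\xi\rangle_{q}\cap A\langle\xi\rangle_{q}=p\,A\langle\xi\rangle_{q}$; writing everything in the common basis $\{\xi^{[n]_{q}}\}$, this comes down to the fact that $p$ is coprime to each $(n)_{q}$ in the UFD $\mathbb{Z}[q]$. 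I expect this last passage — going from ``$\Phi$ restricts to the symmetric Frobenius on a localisation'' to ``$\delta$ is integral on $A\langle\xi\rangle_{q}$'' — to be the only genuine obstacle; the rest is formal.

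Finally, uniqueness goes the same way: over $\mathbb{Q}(q)[x]$ the map $A[\xi]\to A\langle\xi\rangle_{q}$ is an isomorphism, so the symmetric structure extends there in only one way, while in the universal case $A\langle\xi\rangle_{q}$ embeds into $\mathbb{Q}(q)[x][\xi]$ compatibly with any natural extension, which is thereby forced to be the restriction of the symmetric $\delta$-structure; naturality then propagates uniqueness to arbitrary $(R,A,q,x)$.
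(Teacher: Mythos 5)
Your construction is exactly the paper's: the extension is defined as the composite of the natural level map $A\langle \xi \rangle_{q} \to A'\langle \omega \rangle_{q(-1)}$, $\xi \mapsto (p)_{q}\omega$, with the divided frobenius $[F]$, and uniqueness is obtained by the usual reduction to the case where the $q$-factorials are invertible. The verification you spell out (reduction to the universal $p$-torsion-free case $R=\mathbb Z[q]$, $A=\mathbb Z[q][x]$, then localization at the $(n)_{q}$ and a coprimality argument to see that $\Phi$ lifts the $p$-power map integrally) is precisely what the paper leaves to the reader as ``easily check'', and it is correct.
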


\begin{proof}
We define $\phi$ as the composition
\[
\xymatrix{
\xi \ar@{|->}[d] & A\langle \xi \rangle_{q} \ar[d] \ar[rrd]^{\phi} && \\
(p)_{q}\omega & A'\langle \omega \rangle_{q(-1)} \ar[rr]^{[F]} && A\langle \xi \rangle_{q}
}
\]
and easily check that it solves the problem.
\end{proof}

This last result also holds for $A\langle \omega \rangle_{q(-1)}$ but we need to go back to our fancy formulas from \cite{GrosLeStumQuiros19} in order to prove it:

\begin{prop} \label{del1}
Assume $R$ is a $\delta$-ring and $A$ is a $\delta$-$R$-algebra with both $q$ and $x$ of rank one.
Then the symmetric $\delta$-structure of $A[\xi]$ extends uniquely (through blowing up) in a natural way to $A\langle \omega \rangle_{q(-1)}$.
\end{prop}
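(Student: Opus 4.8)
The plan is to imitate, one level down, the construction used for $A\langle \xi \rangle_{q}$ in the previous corollary: there, $\phi$ on $A\langle \xi \rangle_{q}$ was obtained by composing the canonical map $A\langle \xi \rangle_{q} \to A'\langle \omega \rangle_{q(-1)}$ with the divided frobenius $[F] : A'\langle \omega \rangle_{q(-1)} \to A\langle \xi \rangle_{q}$. To put a $\delta$-structure on $A\langle \omega \rangle_{q(-1)}$ we need the analogous ``one step further'' divided frobenius, a natural morphism $[F]_{-1} : A''\langle \varpi \rangle_{q(-2)} \to A'\langle \omega \rangle_{q(-1)}$ lifting the relative frobenius $A'[\omega] \to A'[\omega]$ attached to the symmetric $\delta$-structure (here $A'' := R\,{}_{\phi}\!\nwarrow\!\!\otimes_R A'$), and then define $\phi$ on $A\langle \omega \rangle_{q(-1)}$ as the composite $A\langle \omega \rangle_{q(-1)} \to A'\langle \varpi \rangle_{q(-2)} \to A''\langle \varpi \rangle_{q(-2)} \to A'\langle \omega \rangle_{q(-1)}$, the first arrow being the semilinear map of remark 3 after Proposition \ref{propbl} (with $m=2$), the second its $A'$-base change, and the third $[F]_{-1}$. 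Equivalently — and this is the cleaner route — I would show directly that the explicit formula \eqref{bn}--\eqref{an} for $[F]$ makes sense and defines a natural ring morphism \emph{after replacing $q$ by $q^{p}$ throughout and $\xi$ by $\omega$}, i.e. that the same combinatorial identities proved in section 7 of \cite{GrosLeStumQuiros19} hold verbatim with $q \rightsquigarrow q^{p}$; this is legitimate because those identities are polynomial identities in $q$ over $\mathbb{Z}$, hence stable under the substitution $q \mapsto q^{p}$.

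Concretely, the steps are as follows. First, extend the symmetric $\delta$-structure from $A[\xi]$ to $A[\omega] = A[\xi/(p)_q]$ through the blowing up $\xi \mapsto (p)_q\omega$; one checks it is the unique $\delta$-structure on $A[\omega]$ lifting that of $A$ for which $x + (p)_q\omega$ has rank one, and compute the corresponding relative frobenius $A'[\omega] \to A[\omega]$. Second, invoke Theorem \ref{propdivf} with $q$ replaced by $q^{p}$ — which is still a rank-one element of the $\delta$-ring $R$, since $\delta(q^p) = 0$ follows from $\delta(q)=0$ — to get a natural divided frobenius $[F]' : A'\langle \varpi \rangle_{q^p(-1)} \to A\langle \omega \rangle_{q^p}$; then precompose with the base-change and level-comparison maps of Proposition \ref{propbl} and its remarks (parts 2 and 3) to land in $A\langle \omega \rangle_{q(-1)}$ after the appropriate blow-up, obtaining a natural morphism $[F] : A'\langle \varpi \rangle_{q(-2)} \to A\langle \omega \rangle_{q(-1)}$ lifting the relative frobenius on $A[\omega]$. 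Third, define $\phi$ on $A\langle \omega \rangle_{q(-1)}$ as the composition of the semilinear map $A\langle \omega \rangle_{q(-1)} \to A'\langle \varpi \rangle_{q(-2)}$ (remark 3 after Proposition \ref{propbl}) with this $[F]$, set $\delta := (\phi - (\cdot)^p)/p$, and verify it is a genuine $\delta$-structure extending that of $A[\omega]$: additivity, the two Leibniz-type axioms, and the requirement that it restrict correctly are all checked, as usual, after reducing to $R = \mathbb{Q}(q)$ (or to a polynomial ring over $\mathbb{Z}[q]$ and then its fraction field), where $(p)_q$ and all $(n)_{q^{p^j}}!$ become invertible and everything is forced. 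Fourth, uniqueness: a natural $\delta$-structure extending the symmetric one on $A[\omega]$ through blowing up is unique because, again after passing to $\mathbb{Q}(q)$, the ring $A\langle \omega \rangle_{q(-1)}$ is identified with a subring of $A[\omega] \otimes \mathbb{Q}(q)$ on which $\phi$ is already determined, and naturality propagates this back.

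The main obstacle is Step 2–3, i.e. producing the ``deeper'' divided frobenius and checking it is multiplicative: this is exactly the point where in the level-$0$ case one had to fall back on the explicit coefficients $a_{n,i}, b_{n,i}$ of \eqref{an}--\eqref{bn} and the delicate $q$-binomial identities behind them. The saving grace, which I would emphasize, is that nothing genuinely new has to be proved: every identity in section 7 of \cite{GrosLeStumQuiros19} is an identity of polynomials (or rational functions) in the formal variable $q$ with integer coefficients, so it remains valid under $q \mapsto q^{p}$, and the combinatorial input of Theorem \ref{propdivf} applies mutatis mutandis. The only care needed is bookkeeping with the various blow-up factors $(p)_q^n$, $(p^2)_q^n$ and the identifications of Proposition \ref{propbl}, together with confirming that $\phi$ so defined indeed reduces modulo the blow-up to the symmetric $\phi$ on $A[\omega]$; both are routine once the framework is set up. The uniqueness clause then costs nothing beyond the standard reduction to $\mathbb{Q}(q)$.
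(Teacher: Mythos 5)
Your proposal has a genuine gap, and it sits at the very first concrete step. You claim that the symmetric $\delta$-structure of $A[\xi]$ descends through the blowing up $\xi \mapsto (p)_{q}\omega$ to a $\delta$-structure on the polynomial ring $A[\omega]$ for which $x + (p)_{q}\omega$ has rank one. This is false in general: rank one would force $\phi(\omega) = \left((x+(p)_{q}\omega)^p - x^p\right)/(p)_{q^p}$, which does not lie in $A[\omega]$, and the example given right after proposition \ref{del1} (for $p=2$, $\phi(\omega) = (1+q)^2\omega^{\{2\}} + (1+q)x\omega \notin A[\omega]$) is exactly a counterexample — this is the whole reason the statement concerns $A\langle\omega\rangle_{q(-1)}$ and not $A[\omega]$. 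Consequently the ``relative frobenius on $A[\omega]$'' that your Steps 2--3 propose to lift does not exist, so the deeper divided frobenius cannot be produced as a lift ``through blowing up'' of a polynomial-level map. Two further structural problems: the composite you write down ends at $A'\langle\omega\rangle_{q(-1)}$, not at $A\langle\omega\rangle_{q(-1)}$, so it only defines a Frobenius lift when $A$ is itself a frobenius pullback (precisely the caveat of the paper's closing remark, whereas the proposition is for arbitrary $A$); and the blow-up/level-comparison maps you invoke all go from the $q^{p^r}$-rings towards the $q$-rings, so they cannot be assembled into a morphism with source $A'\langle\varpi\rangle_{q(-2)}$ as you want. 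Finally, your verification strategy ``check everything after reducing to $\mathbb Q(q)$'' only establishes the ring-homomorphism property (legitimate, since these rings are free $A$-modules and inject into the localization); it cannot establish the $\delta$-condition, i.e. the congruence $\phi(z) \equiv z^p \bmod p$, because $p$ is invertible in $\mathbb Q(q)$. The paper instead reduces to the $p$-torsion-free case $R=\mathbb Z[q]$, $A=R[x]$, where $\phi$- and $\delta$-structures are equivalent, and argues integrally.

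The salvageable kernel of your ``cleaner route'' is, however, essentially what the paper does, provided it is anchored to $A[\xi]$ rather than to a nonexistent structure on $A[\omega]$: the paper defines the endomorphism directly by $\omega^{\{n\}_{q}} \mapsto \sum_{i=n}^{pn}(p)_{q}^i\,\phi(b_{n,i})\,x^{pn-i}\omega^{\{i\}_{q}}$, i.e. the coefficients $b_{n,i}$ of \eqref{bn} with $q$ replaced by $q^p$ together with the blow-up factors $(p)_{q}^i$, and proves multiplicativity by comparing with the explicit frobenius $\xi^{(n)_{q^p}} \mapsto \sum_{i}\phi(a_{n,i})x^{pn-i}\xi^{(i)_{q^p}}$ on $A[\xi]$ along the blow-up $\xi^{(n)_{q^p}} \mapsto (n)_{q^p}!\,(p)_{q}^n\,\omega^{\{n\}_{q}}$, using $\phi((i)_{q}!) = (i)_{q^p}!$. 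If you want a ``maps only'' construction in the spirit of your plan, the correct routing is: the semilinear map of lemma \ref{mid} applied with $q$ replaced by $q^p$ (which lands in $A'\langle\varpi\rangle_{q^p(-1)}$, not in $A'\langle\varpi\rangle_{q(-2)}$), followed by theorem \ref{propdivf} with $q$ replaced by $q^p$, followed by the blow-up $A\langle\xi\rangle_{q^p} \to A\langle\omega\rangle_{q(-1)}$ of proposition \ref{propbl}; this composite is an endomorphism of $A\langle\omega\rangle_{q(-1)}$ and recovers exactly the paper's formula — but one must still verify integrally that it reduces to the frobenius modulo $p$, which your $\mathbb Q(q)$ reduction does not provide.
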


\begin{proof}
It is actually sufficient to show that there exists a unique \emph{natural} $\phi$-structure on $A\langle \omega \rangle_{q(-1)}$: we may assume that $R = \mathbb Z[q]$ and $A=R[x]$ are $p$-torsion free and the notions of $\delta$ and $\phi$-structures are then equivalent.
In order to avoid confusions, let us denote by $\phi_{A} : A \to A$ the frobenius of $A$.
It follows from proposition 7.5 (and lemma 7.8 which shows that the sum actually starts at $i=n$) of \cite{GrosLeStumQuiros19} that the (absolute) frobenius on $A[\xi]$ is the $\phi_{A}$-linear morphism given by
\[
\phi : A[\xi] \to A[\xi], \quad \xi^{(n)_{q^p}} \mapsto \sum_{i=n}^{pn} \phi(a_{n,i})x^{pn-i}\xi^{(i)_{q^p}}
\]
where the $a_{n,i}$ are defined in \eqref{an}.
We consider now the natural $\phi_{A}$-linear morphism of $A$-modules
\[
\phi : A\langle \omega \rangle_{q(-1)} \to A\langle \omega \rangle_{q(-1)}, \quad \omega^{\{n\}_{q}} \mapsto \sum_{i=n}^{pn} (p)_{q}^i\phi(b_{n,i})x^{pn-i}\omega^{\{i\}_{q}}
\]
where the $b_{n,i}$ are defined in \eqref{bn}.
We want to show that this is a morphism of rings that reduces to the frobenius modulo $p$.
Actually, since the blowing up map
\[
A[\xi] \to A\langle \omega \rangle_{q(-1)}, \quad \xi^{(n)_{q^p}}\mapsto (n)_{q^p}! (p)_{q}^n \omega^{\{n\}_{q}}
\]
becomes an isomorphism when all $q$-analogs are invertible, it is sufficient to prove that the diagram
\[
\xymatrix{
\xi^{(n)_{q^p}} \ar@{|->}[d] & A[\xi] \ar[rr]^{\phi} \ar[d] && A[\xi] \ar[d] & \xi^{(n)_{q^p}} \ar@{|->}[d] \\
(n)_{q^p}! (p)_{q}^n \omega^{\{n\}_{q}} & A\langle \omega \rangle_{q(-1)} \ar[rr]^{\phi} && A\langle \omega \rangle_{q(-1)} & (n)_{q^p}! (p)_{q}^n \omega^{\{n\}_{q}}
}
\]
is commutative.
We compute the image of $\xi^{(n)_{q^p}}$ along both paths.
On one hand, we find
\[
\sum_{i=n}^{pn} (i)_{q^p}! (p)_{q}^i \phi(a_{n,i})x^{pn-i}\omega^{\{i\}_{q}}
\]
and on the other hand
\[
\phi\left((n)_{q^p}! (p)_{q}^n\right) \sum_{i=n}^{pn} (p)_{q}^i\phi(b_{n,i})x^{pn-i}\omega^{\{i\}_{q}}.
\]
In order to conclude, it is therefore sufficient to recall that $\phi((i)_{q}!) = (i)_{q^p}!$ and formula \eqref{bn} then provides an equality
\[
(i)_{q^p}! \phi(a_{n,i}) = \phi\left((n)_{q^p}! (p)_{q}^n\right)\phi(b_{n,i}).\qedhere
\]
\end{proof}

\begin{xmp}
Assume $p=2$.
Then, we have
\[
\phi(\xi) = (1+q)\xi^{[2]}+ (1+q)x\xi \quad \mathrm{and} \quad \phi(\omega) = (1+q)^2\omega^{\{2\}} + (1+q)x\omega.
\]
In particular, we see that there is no \emph{natural} $\delta$-structure (i.e. compatible with blowing up) on the polynomial ring $A[\omega]$ in general:
we do have $\phi(\xi) \in A[\xi]$ but $\phi(\omega) \notin A[\omega]$.
\end{xmp}

\begin{rmk}
The diagram
\[
\xymatrix{
A'\langle \omega \rangle_{q(-1)} \ar[rr]^{[F]} \ar[rrd]^{\phi} && A\langle \xi \rangle_{q} \ar[d] & \xi \ar@{|->}[d] \\
&& A'\langle \omega \rangle_{q(-1)} & (p)_{q}\omega
}
\]
is commutative and may be used to define the $\delta$-structure of $A\langle \omega \rangle_{q(-1)}$ when $A$ itself is a frobenius pullback (which will be the case in practice).
\end{rmk}

\section{Twisted coordinate} \label{coord}

This section is completely independent of the previous one.
We recall some notions introduced in \cite{LeStumQuiros18} and give an important example of a situation where these notions apply.
However, we work on adic rings as in \cite{GrosLeStumQuiros20}, and not merely on usual rings.

We assume that $R$ is an adic ring (not necessarily complete) and that $A$ is a \emph{complete} adic $R$-algebra (not necessarily $R$-adic at this point).
Adic rings are \emph{always} assumed to admit a finitely generated ideal of definition.
Unless otherwise specified, completion is always meant relative to the adic topology.
We will not need it here but we could even assume that $R$ and $A$ are Huber rings as long as $P_{A/R} := A \otimes_{R} A$ is also a Huber ring (multiplication on $P_{A/R}$ need not be continuous in general).

We denote by
\[
p_{1} : A \to P_{A/R}, f \mapsto f \otimes 1 \quad \mathrm{and} \quad p_{2} : A \to P_{A/R}, f \mapsto 1 \otimes f
\]
the ``projections'' and by
\[
\Delta : P_{A/R} \to A, f \otimes g \mapsto fg
\]
the ``diagonal map'' (we use the - contravariant - geometric vocabulary and notations).
Unless otherwise specified, we will always consider $P_{A/R}$ as an $A$-module through $p_{1}$ and call $p_{2}$ the ``Taylor map''.

Assume now that $A$ is a \emph{twisted $R$-algebra}, which simply means that the $R$-algebra $A$ is endowed with a \emph{continuous} endomorphism $\sigma$.
We extend $\sigma$ to $P_{A/R}$ in an asymmetric way by the formula $\sigma(f \otimes g) = \sigma(f) \otimes g$.
We let $I_{A/R}$ be the kernel of $\Delta$ and define the \emph{twisted powers} and \emph{twisted principal parts of order $n$}:
\[
I_{A/R}^{(n+1)_{\sigma}} := I_{A/R}\sigma(I_{A/R}) \cdots \sigma^n(I_{A/R})
\quad \mathrm{and} \quad
P_{A/R,(n)_{\sigma}} := \widehat P_{A/R}/\overline{I_{A/R}^{(n+1)_{\sigma}}}
\]
(where the hat (resp. the bar) indicates the completion (resp. the closure)).
\begin{dfn}
Assume $A$ is a \emph{twisted $R$-algebra} and let $x \in A$.
Then $x$ is a \emph{$\sigma$-coordinate} if the canonical map
\[
A[\xi]_{\leq n} \to P_{A/R,(n)_{\sigma}}, \quad \xi \mapsto \overline{1 \otimes x - x \otimes 1}
\]
is bijective\footnote{This is the $\sigma$-analog of \emph{differential smoothness} (of relative dimension one).} for all $n \in \mathbb Z_{\geq 0}$.
\end{dfn}

It is actually convenient to extend $\sigma$ to the polynomial ring $A[\xi]$ by requiring that $\sigma(x+\xi) = x + \xi$ and to introduce the \emph{twisted powers} $\xi^{(n+1)_{\sigma}} := \xi\sigma(\xi)\ldots \sigma^n(\xi) \in A[\xi]$.
One easily sees that $x$ is a $\sigma$-coordinate if and only if
\[
A[\xi]/(\xi^{(n+1)_{\sigma}}) \simeq P_{A/R,(n)_{\sigma}}
\]
(isomorphism of $A$-algebras).
In general, we also set
\[
\Omega_{A/R,\sigma} := \overline I_{A/R}/\overline I_{A/R}^{(2)_{\sigma}}
\]
and we denote by $\mathrm d_{\sigma} : A \to \Omega_{A/R,\sigma}$ the map induced by $p_{2} - p_{1}$.
This $A$-module represents the \emph{$\sigma$-derivations} $D : A \to M$, i.e. the $R$-linear maps satisfying the twisted Leibniz rule
\[
\forall f, g \in A, \quad D(fg) = D(f)g + \sigma(f)D(g).
\]
When $x$ is a $\sigma$-coordinate, $\Omega_{A/R,\sigma}$ is a free module on the generator $\mathrm d_{\sigma}x$ and we denote by $\partial_{\sigma}$ the corresponding derivation of $A$ (so that $\mathrm d_{\sigma} f = \partial_{\sigma}(f)\mathrm d_{\sigma} x$).

In order to make clear the relation with the notions introduced in section \ref{partial}, note that if $x$ is a $\sigma$-coordinate such that $\sigma(x) = qx$ with $q \in R$, then $\xi^{(n+1)_{q}} = \xi^{(n+1)_{\sigma}}$.
In this situation, we will use $q$ rather than $\sigma$ as index or prefix in all our notations so that we will write $I_{A/R}^{(n+1)_{q}}$, $P_{A/R,(n)_{q}}$, $\Omega_{A/R,q}$, $\mathrm d_{q}$ or $\partial_{q}$, for example, and say $q$-coordinate or $q$-derivation (even if everything actually depends on $\sigma$).

Let us give an important example where there exists a $q$-coordinate (we already used these results without proofs in \cite{GrosLeStumQuiros20}).

\begin{dfn} \label{topet}
A \emph{topologically \'etale coordinate} $x$ on $A$ with respect to $R$ is the image of an indeterminate $X$ under some topologically \'etale (that is, formally \'etale and topologically finitely presented) map $R[X] \to A$.
\end{dfn}

\begin{prop} \label{liftsig}
If $q-1$ is topologically nilpotent and $x$ is a topologically \'etale coordinate on $A$, then there exists a unique continuous endomorphism $\sigma$ of $A$ such that $\sigma(x) = qx$.
Moreover, $x$ is a $q$-coordinate on $A$.
\end{prop}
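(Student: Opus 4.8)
The plan is to construct $\sigma$ by a standard formal-\'etale lifting argument and then to verify the $q$-coordinate property by reduction to the polynomial ring. First I would produce $\sigma$. Since $x$ is a topologically \'etale coordinate, there is a formally \'etale, topologically finitely presented map $R[X] \to A$ sending $X$ to $x$. Consider the $R$-algebra map $R[X] \to A$ given by $X \mapsto qx$; this is well-defined and continuous because $q \in R$ and $A$ is an $R$-algebra. I want to extend it along $R[X] \to A$ to a continuous map $A \to A$. The obstruction to such a lift lives in the cotangent complex / first-order deformations, and formal \'etaleness of $R[X] \to A$ kills it: more precisely, working modulo powers of an ideal of definition and passing to the limit (here is where topological finite presentation and completeness of $A$ are used, so that the inverse limit of the finite-level lifts converges to an honest continuous endomorphism), the formally \'etale property gives existence and uniqueness of the dotted arrow in
\[
\xymatrix{
R[X] \ar[r] \ar[d] & A \ar@{-->}[d]^{\sigma} \\
A \ar[r]_{\mathrm{id}} & A
}
\]
where the left vertical map is $X \mapsto x$, the top horizontal map is $X \mapsto qx$, and one checks the square commutes on $R[X]$ so a unique continuous $\sigma$ exists. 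By construction $\sigma(x) = qx$, and uniqueness of $\sigma$ (as a continuous $R$-endomorphism with $\sigma(x)=qx$) also follows from the uniqueness in the lifting property, since any such $\sigma$ restricts to $X \mapsto qx$ on $R[X]$.

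Next I would check that $x$ is a $q$-coordinate, i.e. that $A[\xi]/(\xi^{(n+1)_q}) \simeq P_{A/R,(n)_\sigma}$ for all $n$. The canonical $A$-algebra map $A[\xi] \to \widehat P_{A/R}$, $\xi \mapsto \overline{1\otimes x - x\otimes 1}$, visibly sends $\xi^{(n+1)_q} = \xi^{(n+1)_\sigma}$ into $\overline{I_{A/R}^{(n+1)_\sigma}}$ (since $\xi \mapsto$ an element of $I_{A/R}$ and $\sigma^i(\xi)$ maps into $\sigma^i(I_{A/R})$), so it factors through $A[\xi]_{\leq n} \to P_{A/R,(n)_\sigma}$ after quotienting; it remains to see this is bijective. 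For this I would reduce to the polynomial case: since $R[X] \to A$ is topologically \'etale, the module of $\sigma$-principal parts is compatible with this base change, $P_{A/R,(n)_\sigma} \simeq A \,\widehat\otimes_{R[X]}\, P_{R[X]/R,(n)_\sigma}$ (formal \'etaleness makes the relative principal parts of $A$ over $R$ "the same" as those of $R[X]$ over $R$, up to completed base change along $R[X]\to A$). Then it suffices to treat $A = R[X]$ with $\sigma(X) = qX$, where $P_{R[X]/R} = R[X]\otimes_R R[X] = R[X][\xi]$ with $\xi = 1\otimes X - X\otimes 1$, the ideal $I$ is $(\xi)$, and one computes $I^{(n+1)_\sigma} = (\xi\sigma(\xi)\cdots\sigma^n(\xi)) = (\xi^{(n+1)_q})$ directly — giving the desired isomorphism on the nose, no completion issue at finite level.

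The main obstacle is the first part: making the formal-\'etale lift produce a genuinely \emph{continuous} endomorphism of the \emph{complete} adic ring $A$, rather than just a ring map. One has to run the lifting criterion at each truncation $A/\mathfrak a^k$ (for $\mathfrak a$ a finitely generated ideal of definition), check the resulting system is compatible, and take the limit, using that $A$ is complete and that topological finite presentation guarantees the finite-level problems are the ones controlled by formal \'etaleness; this is where the hypothesis "$q-1$ is topologically nilpotent" enters, ensuring $X \mapsto qx$ and $X \mapsto x$ agree modulo $\mathfrak a$ so that the two maps are the identity modulo the ideal of definition and the deformation-theoretic lifting criterion applies at each step. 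The second part, by contrast, is essentially bookkeeping once base change for $\sigma$-principal parts along a formally \'etale map is set up, and the polynomial computation $I^{(n+1)_\sigma} = (\xi^{(n+1)_q})$ is immediate from the definitions.
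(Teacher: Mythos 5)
Your construction of $\sigma$ is essentially the paper's: both arguments run the formal-\'etale lifting criterion for $R[X]\to A$ against a (topologically) nilpotent ideal containing $q-1$, using that $X\mapsto qx$ and $X\mapsto x$ agree modulo that ideal, and then pass to the limit using completeness (the paper phrases this as ``$A$ is $(q-1)$-adically complete, so assume $q-1$ nilpotent''). Only a cosmetic remark here: the square you draw is not literally the lifting square (the dashed $\sigma$ should be a lift across a nilpotent surjection such as $A/\mathfrak a^k\twoheadrightarrow A/\mathfrak a$, not the right-hand vertical of a square whose bottom is the identity), but your prose at the end describes the correct mechanism.

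The second half, however, has a genuine gap. You reduce the $q$-coordinate property to the polynomial case via the asserted base change $P_{A/R,(n)_\sigma}\simeq A\,\widehat\otimes_{R[X]}P_{R[X]/R,(n)_\sigma}$, presented as a routine consequence of formal \'etaleness. For \emph{twisted} principal parts this is not a standard fact, and it is essentially equivalent to the statement you are trying to prove. The classical \'etale-invariance of principal parts rests on $P_{A/R}/I^{n+1}$ being a nilpotent thickening of $A$; here the relevant kernel is $I_{A/R}/\overline{I_{A/R}^{(n+1)_\sigma}}$, and the twisted powers $I_{A/R}^{(n+1)_\sigma}=I\,\sigma(I)\cdots\sigma^n(I)$ only contain a power of $I$ up to $(q-1)$-correction terms (because $\sigma\equiv\mathrm{id}\bmod(q-1)$); so any proof of your base-change claim must invoke the topological nilpotence of $q-1$ and argue by reduction modulo $q-1$ --- a hypothesis your second part never uses, which is the symptom of the missing step. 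This is exactly what the paper does instead of invoking twisted base change: modulo $q-1$ the twisted principal parts become the classical ones and the map $A[\xi]/(\xi^{(n+1)_q})\to P_{A/R,(n)_q}$ reduces to the classical isomorphism for an \'etale coordinate; surjectivity then follows from Nakayama, and injectivity is obtained by constructing a section of the map through another formal-\'etale lifting (from $R[X,\xi]/\xi^{(n+1)}$). So your overall shape is right, but the step you call ``essentially bookkeeping'' is where the actual work lies, and as written it is unjustified.
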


\begin{proof}
Since $A$ is complete adic and $q-1$ is topologically nilpotent, $A$ is complete for the $(q-1)$-adic topology.
We may therefore assume that $q-1$ is nilpotent.
The existence and uniqueness of $\sigma$ are trivially true when $A = R[X]$ and $x = X$.
In general, they follow from the commutativity of the diagram
\[
\xymatrix{
R[X] \ar[r]^{\sigma} \ar[d] & R[X] \ar[r] & A. \ar@{->>}[d] \\
A \ar[rr] \ar@{-->}[rru] && A/(q-1)
}
\]
since $R[X] \to A$ is formally \'etale.
Now, since $x$ is a topologically \'etale coordinate on $A$ over $R$, it is also a topologically \'etale coordinate on $A/(q-1)$ over $R/(q-1)$.
Thus, modulo $q-1$, the canonical map 
\begin{equation} \label{canm}
A[\xi]/\xi^{(n+1)} \to P_{A/R,(n)_{q}}
\end{equation}
reduces to an isomorphism
\[
(A/(q-1))[\xi]/\xi^{n+1} \simeq P_{A/(q-1),n}.
\]
Surjectivity of \eqref{canm} therefore follows from Nakayama's lemma.
Moreover, we can build a section using the commutative diagram
\[
\xymatrix{
R[X,\xi]/\xi^{(n+1)} \ar[rr] \ar[d] && A[\xi]/\xi^{(n+1)} \ar@{->>}[d] \\
P_{A,(n)_{q}} \ar[r] \ar@{-->}[rru] & P_{A/(q-1),n} & (A/(q-1))[\xi]/\xi^{n+1} \ar[l]_-\simeq 
}
\]
and obtain injectivity.
\end{proof}

One may also produce a frobenius on $A$ using the same methods:

\begin{prop} \label{frolif}
Assume $p$ is a topologically nilpotent prime in $A$ and $R$ is a $\phi$-ring.
If $x$ is a topologically \'etale coordinate on $A$, then there exists a unique structure of $\phi$-$R$-algebra on $A$ with $x$ of rank one (i.e. $\phi(x) = x^p$). 
Moreover, the relative frobenius
\[
F: A' := R {}_{{}_{\phi}\nwarrow}\!\!\otimes_{R} A \to A
\]
is free of rank $p$.
\end{prop}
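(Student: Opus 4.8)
The plan is to mimic, in the adic/formal setting, the standard argument that a lift of Frobenius along a formally étale morphism exists and is essentially unique, and then to compute the rank of the relative Frobenius directly in the coordinate case and transfer the computation by base change.

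First I would reduce to the case where $p$ is nilpotent in $A$: since $A$ is complete adic and $p$ is topologically nilpotent, $A$ is $p$-adically complete, so $A = \varprojlim A/p^k A$ and it suffices to construct $\phi$ compatibly on each $A/p^k A$, each of which has $p$ nilpotent. For existence and uniqueness of the $\phi$-structure, I would argue exactly as in the proof of Proposition \ref{liftsig}: the claim is trivial for $A = R[X]$ with $\phi(X) = X^p$ (and $\phi$ on $R$ the given $\phi$-ring structure), and in general one uses that $R[X] \to A$ is formally étale together with the square
\[
\xymatrix{
R[X] \ar[r]^{\phi} \ar[d] & R[X] \ar[r] & A \ar@{->>}[d] \\
A \ar[rr] \ar@{-->}[rru] && A/pA
}
\]
which commutes because $\phi$ reduces to the $p$-power map modulo $p$; formal étaleness yields the unique diagonal lift $A \to A$, and one checks it is a ring homomorphism over $\phi|_R$ lifting Frobenius, hence (after passing to the limit) gives the unique $\phi$-$R$-algebra structure on $A$ with $x$ of rank one. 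Uniqueness is the same lifting property applied to two candidate lifts that agree modulo $p$.

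For the freeness and rank of $F : A' = R\,{}_{{}_{\phi}\nwarrow}\!\!\otimes_R A \to A$, I would again localize to $R[X]$: here $A' = R[X']$ with $X' = 1 \otimes X$ (up to the identification $R\,{}_{{}_{\phi}\nwarrow}\!\!\otimes_R R[X] \simeq R[X']$), and $F(X') = X^p$, so $F$ identifies $A$ with $R[X'][X]/(X^p - X')$, which is visibly free of rank $p$ over $A'$ with basis $1, X, \dots, X^{p-1}$. For the general $A$, since $R[X] \to A$ is topologically formally étale, base change along $\phi$ commutes with the completed tensor product and $A' = A\,\widehat\otimes_{R[X']}\,R[X']$-type reasoning shows $F : A' \to A$ is the base change (completed, along the formally étale map) of $R[X'] \to R[X']/(X^p - X')\cdot$(appropriately) — concretely, $A \simeq A' \,\widehat\otimes_{R[X']} R[X]$ where $R[X'] \to R[X]$, $X' \mapsto X^p$, is finite free of rank $p$; freeness of rank $p$ is preserved by the (completed) base change along the topologically finitely presented map $R[X'] \to A'$. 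I would present the bases explicitly: $1, x, \dots, x^{p-1}$ generate $A$ as an $A'$-module.

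The main obstacle I expect is the bookkeeping around \emph{completed} versus ordinary tensor products and the assertion that formal étaleness is stable under the $\phi$-twisted base change in the adic category — i.e. genuinely checking that $A' = R\,{}_{{}_{\phi}\nwarrow}\!\!\widehat\otimes_R A$ still has $x' $ as a topologically étale coordinate and that $A \simeq A' \,\widehat\otimes_{R[X']} R[X]$ as topological rings. Once that compatibility is in hand, everything reduces to the elementary rank-$p$ computation for $R[X]/(X^p - X')$, and the $\delta$-structure claim follows formally since $p$-torsion-free-ness is not needed (we are only asserting existence of a $\phi$-structure, which on a general ring is extra data, but here it is forced by the universal property, and one may then feed it into the standard dictionary when $A$ happens to be $p$-torsion free). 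I would keep the proof short by citing Proposition \ref{liftsig} for the lifting technique and only spelling out the rank computation.
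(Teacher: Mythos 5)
Your proposal is correct and follows essentially the same route as the paper: reduce to $p$ nilpotent and use the formal-\'etaleness lifting trick of Proposition \ref{liftsig} (the paper lifts the \emph{relative} Frobenius $F : A' \to A$ along $R[X] \to A'$, whereas you lift the absolute $\phi$ along $R[X] \to A$ --- equivalent data). The freeness of rank $p$ is likewise obtained in the paper exactly as you suggest, by pulling back the polynomial-case isomorphism $A'[T]/(T^p - 1 \otimes x) \simeq A$ along $R[X] \to A'$, i.e.\ the identification you flag as the remaining bookkeeping, which the paper treats at the same level of brevity.
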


\begin{proof}
We proceed exactly as above and it is sufficient to prove existence and uniqueness of the \emph{relative} frobenius $F$.
First, we reduce to the case where $p$ is nilpotent in $A$.
In the case $A = R[X]$ and $x = X$, there is nothing to do.
In general, existence and uniqueness follow from the commutativity of the diagram
\[
\xymatrix{
R[X] \ar[rr]^{F} \ar[d] && R[X] \ar[r] & A \ar@{->>}[d] \\
A' \ar@{->>}[rr] \ar@{-->}[rrru] && A'/p \ar[r]^{F} & A/p ,
}
\]
in which the last map is the \emph{relative} frobenius of $A/p$.
It only remains to verify that $F:A' \to A$ is free of rank $p$: actually, there exists an isomorphism $A'[T]/(T^p -1 \otimes x) \simeq A$ which is obtained from the analog map when $A = R[X]$ after pulling back along $R[X] \to A'$.
\end{proof}

We can improve a little bit on the previous result (see also \cite{BhattScholze19}, lemma 2.18):

\begin{prop} \label{frolifp}
Assume $p$ is a topologically nilpotent prime in $A$ and $R$ is a $\delta$-ring.
If $x$ is a topologically \'etale coordinate on $A$, then there exists a unique structure of $\delta$-$R$-algebra on $A$ with $x$ of rank one (i.e. $\delta(x) = 0$).
\end{prop}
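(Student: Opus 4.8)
The plan is to reduce the construction of a $\delta$-structure to the construction of a Frobenius lift, exactly as in the proof of Proposition \ref{frolif}, and then check that the Frobenius lift produced there is compatible with a $\delta$-structure (which is automatic once one is careful about torsion). First I would reduce to the case where $p$ is nilpotent in $A$: since $A$ is complete adic and $p$ is topologically nilpotent, $A$ is the inverse limit of the quotients $A/p^N A$, a $\delta$-structure is the same as a compatible system of $\delta$-structures on the quotients, and likewise for uniqueness; moreover a topologically étale coordinate reduces to an étale coordinate on each quotient. So assume $p^N = 0$ in $A$.

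Next I would observe that on $A$, once $p$ is nilpotent, giving a $\delta$-$R$-algebra structure is \emph{not} the same as giving a Frobenius lift, because $A$ need not be $p$-torsion free. The standard trick is to first build the structure on a $p$-torsion free model and then descend. Concretely: write $A$ as a quotient of a (topologically) polynomial $R$-algebra, or better, lift the étale coordinate map $R[X] \to A$ to an étale map $\widetilde R[X] \to \widetilde A$ where $\widetilde R$ is a $p$-torsion free $\delta$-ring lifting $R$ (for the universal case $R = \mathbb{Z}[q]$, $A = R[x]$, $A$ is already $p$-torsion free and $\delta$ is equivalent to $\phi$). In that $p$-torsion free situation, Proposition \ref{frolif} gives a unique relative Frobenius $F$ with $\phi(x) = x^p$, hence a unique endomorphism $\phi$ of $\widetilde A$ lifting Frobenius mod $p$ with $\phi(x) = x^p$; since $\widetilde A$ is $p$-torsion free, the formula $\delta(f) = (\phi(f) - f^p)/p$ defines the desired $\delta$-structure, and $\delta(x) = 0$. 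By naturality (formal étaleness makes all these constructions functorial in the diagram $R \to A$), one then descends to the original $A$ by base change. Uniqueness follows the same pattern: any two $\delta$-structures on $A$ with $\delta(x)=0$ agree after pulling back to the universal $p$-torsion free model where $\delta$ is determined by $\phi$, which is unique by Proposition \ref{frolif}.

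The main obstacle is the passage from ``Frobenius lift'' to ``$\delta$-structure'' when $A$ itself has $p$-torsion: one must genuinely exhibit the étale lift $\widetilde A$ over a $p$-torsion free $\delta$-ring $\widetilde R$ (this uses formal étaleness of $R[X] \to A$ to lift along the square-zero-type extension $\widetilde R \to R$, and topological finite presentation to control completions), and then check that the resulting $\delta$ on $\widetilde A$ does descend to $A$, i.e. that $\delta$ kills the defining ideal. This is where one invokes that a $\delta$-structure on a quotient $B/I$ exists precisely when $\phi(I) \subseteq I + pB$ together with a divisibility condition, which here is guaranteed by functoriality of the construction of $\phi$ from Proposition \ref{frolif}. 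The remaining verifications — that $\delta$ is additive and multiplicative, that it extends the $\delta$-structure of $R$, that $\delta(x) = 0$ — are routine once the torsion-free model is in place, and I would leave them to the reader as in the analogous earlier proofs.
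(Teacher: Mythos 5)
There is a genuine gap, and it sits exactly at the point you identify as ``the main obstacle''. Your plan is to transfer the problem to a $p$-torsion free model: lift $R$ to a $p$-torsion free $\delta$-ring $\widetilde R$, lift the \'etale coordinate map to $\widetilde R[X] \to \widetilde A$, apply Proposition \ref{frolif} there, set $\delta = (\phi - (\,\cdot\,)^p)/p$, and descend. But none of these steps is justified: a $p$-torsion free $\delta$-ring $\widetilde R$ surjecting onto $R$ is not a nilpotent (or adically complete) thickening of $R$, so formal \'etaleness of $R[X] \to A$ gives you no handle to produce $\widetilde A$; even granting $\widetilde A$, descending $\delta$ to $A$ requires $\ker(\widetilde A \to A)$ to be a $\delta$-ideal, which you assert ``by functoriality'' without argument (functoriality of the Frobenius lift only controls $\phi$, and $\phi(I) \subset I + pB$ does not by itself make $I$ a $\delta$-ideal); and uniqueness cannot be checked on a torsion-free model at all, because when $A$ has $p$-torsion two distinct $\delta$-structures on $A$ can induce the same Frobenius lift, so agreement of the associated $\phi$'s upstairs proves nothing about $\delta$ on $A$. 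In short, the reduction ``$\delta$-structure $=$ Frobenius lift on a torsion-free lift'' is precisely what fails here, and the proposal does not repair it.

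The paper's proof avoids the torsion issue entirely by using the length-two Witt vector description: a $\delta$-structure on $A$ is a ring section of the projection $\mathrm W_{1}(A) \to A$, whose kernel $V_{1}$ satisfies $V_{1}^{2} \subset pV_{1}$. Since $p$ is topologically nilpotent, $\mathrm W_{1}(A) \to A$ is therefore a (pro-)nilpotent thickening, and formal \'etaleness of $R[X] \to A$ applies directly: the composite $R[X] \to \mathrm W_{1}(R[X]) \to \mathrm W_{1}(A)$, where the first map is the trivial $\delta$-structure with $\delta(X)=0$, extends \emph{uniquely} to a section $A \to \mathrm W_{1}(A)$ over the identity of $A$. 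Existence, uniqueness, and $\delta(x)=0$ all come out of this single lifting argument, with no torsion-free model, no descent of ideals, and no separate uniqueness discussion. If you want to salvage your approach, you would have to replace the torsion-free lift by this Witt-vector thickening (or argue as in lemma 2.18 of \cite{BhattScholze19}); as written, the central step of your proof does not go through.
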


\begin{proof}
There exists a unique structure of $\delta$-$R$-algebra on $R[X]$ such that $\delta(X) = 0$.
Now, by definition (see section 1 of \cite{GrosLeStumQuiros20}), a $\delta$-structure on $A$ is a section of the projection $\mathrm W_{1}(A) \to A$ (where $W_{1}$ denote the Witt vectors of length two).
Note that the kernel $V_{1}$ of this projection satisfies $V_{1}^2 \subset pV_{1}$.
By functoriality, there exists a commutative diagram
\[
\xymatrix{
\mathrm W_{1}(R[X]) \ar[r] \ar[d]& R[X] \ar@/_.5cm/[l] \ar[d] \\
\mathrm W_{1}(A) \ar[r] & A.
}
\]
where the backward upper-map is the above trivial $\delta$-structure of $R[X]$.
Since $V_{1}$ is nilpotent modulo $p$ and $p$ is topologically nilpotent, the composite map
\[
R[X] \to \mathrm W_{1}(R\{X\}) \to W_{1}(A)
\]
extends uniquely to $A$ and defines a $\delta$-structure on $A$ with $\delta(x) = 0$.
\end{proof}

\section{Twisted calculus of negative level} \label{neg}

We keep the same hypotheses as before: $R$ is an adic ring and $A$ is a complete adic $R$-algebra, but we also fix\footnote{Actually, the whole story will only depend on the integer $k:=p^m$ but the terminology and the notations involve $m$.}, as in section \ref{partial}, a prime $p$ and some $m \in \mathbb Z_{\geq 0}$.
We assume that $A$ is a twisted $R$-algebra and that $x \in A$ is a $q^{p^m}$-coordinate (we will denote by $\sigma^{p^m}$ the corresponding endomorphism of $A$ even if it needs not be a power).

\begin{dfn}
A \emph{twisted connection of level $-m$} on an $A$-module $M$ is an $R$-linear map $\nabla : M \to M \otimes \Omega_{A/R,q^{p^m}}$ such that
\[
\forall f \in A, \forall s \in M, \quad \nabla(fs) = (p^m)_{q}\ s \otimes \mathrm d_{q^{p^m}}f + \sigma^{p^m}(f)\nabla(s).
\]
\end{dfn}

Intuitively, this is a twisted connection with a vertical pole: a twisted connection on $M[1/(p^m)_{q}]$ which is defined over $A$.

We will denote\footnote{Be careful that our notation is different from Shiho's \cite{Shiho15}. He use $\mathrm{MIC}^{(m)}$ for connections of level $-m$ but we prefer to keep $\mathrm{MIC}^{(m)}$ for connections of positive level.} by $\mathrm{MIC}_{q}^{(-m)}(A/R)$ the category of $A$-modules endowed with a twisted connection of level $-m$.
The \emph{twisted de Rham cohomology of level $-m$} of $M$ is defined as
\[
H^0_{\mathrm{dR},q(-m)}(M) = \ker \nabla \quad \mathrm{and} \quad H^1_{\mathrm{dR},q(-m)}(M) = \mathrm{coker}\; \nabla
\]
or in the derived category as
\[
\mathrm R\Gamma_{\mathrm{dR},q(-m)}(M) = [M \overset \nabla \to M\otimes \Omega_{A/R,q^{p^m}}].
\]

\begin{xmps}
\begin{enumerate}
\item When $m=0$, we fall back onto the notion of a twisted connection from our previous articles: $\mathrm{MIC}_{q}^{(0)}(A/R) = \mathrm{MIC}_{q}(A/R)$.
\item When $q=1$, we remove the word ``twisted'' and we obtain a \emph{$p^m$-connection} in the sense of Deligne or Simpson
as in definition 1.1 of \cite{Shiho15}.
\item When $(p^m)_{q} = 0$ and $\sigma^{p^m} = \mathrm{Id}$ (which is then automatic in practice), a twisted connection of level $-m$ is the same thing as a \emph{Higgs field} and does not depend on $q$, $p$ or $m$ anymore.
This condition obviously holds when $p=0$ but this also happens for example in the important case when $q$ is a $p^m$th root of unity.
\item The notion of a twisted connection of negative level is also related to the notion of a \emph{differential operator of finite radius} that was studied in \cite{LeStumQuiros17} as well as, in the untwisted case, to the notion of a \emph{congruence level} introduced in \cite{HuygheSchmidtStrauch17*}.
See also corollary 1.3.15 in \cite{Kisin06}, where Mark Kisin associates a $\lambda$-connection to a weakly admissible module.
\end{enumerate}
\end{xmps}

\begin{dfn}
A \emph{twisted derivation of level $-m$} on an $A$-module $M$ is an $R$-linear map $\theta : M \to M$ such that
\[
\forall f \in A, \forall s \in M, \quad \theta(fs) = (p^m)_{q} \partial_{q^{p^m}}(f)s + \sigma^{p^m}(f)\theta(s).
\]
\end{dfn}

Of course, twisted connections and twisted derivations of level $-m$ correspond bijectively via
\[
\nabla(s) = \theta(s)\mathrm d_{q^{p^m}}x,
\]
but the later notion is often more convenient for computations.

\begin{dfn}
The \emph{ring of twisted differential operators of level $-m$ on $A/R$} is the free $A$-module $\mathrm D_{A/R,q}^{(-m)}$ on the generators $\partial_{q}^{\langle n \rangle}$ for $n \in \mathbb Z_{\geq 0}$ with the commutation rules
\[
\forall f \in A, \quad \partial_{q}^{\langle 1 \rangle} \circ f = (p^m)_{q} \partial_{q^{p^m}}(f) + \sigma^{p^m}(f)\partial_{q}^{\langle 1 \rangle}
\]
and
\[
\forall n_{1}, n_{2} \in \mathbb Z_{\geq 0}, \quad \partial_{q}^{\langle n_{1} \rangle} \circ \partial_{q}^{\langle n_{2} \rangle} = \partial_{q}^{\langle n_{1} + n_{2} \rangle}.
\]
\end{dfn}

\begin{rmks}
\begin{enumerate}
\item
If we assume that $x$ is also a $q$-coordinate and if we denote by $\mathrm D_{A/R,q}$ the ring of twisted differential operators that we already considered in our previous articles, then there exists a natural $A$-linear morphism of rings
\begin{align} \label{incl}
\mathrm D_{A/R,q}^{(-m)} \to \mathrm D_{A/R,q}, \quad \partial_{q}^{\langle n\rangle} \mapsto (p^m)_{q}^n\partial_{q^{p^m}}^n.
\end{align}
More generally, we have
\[
\mathrm D_{A/R,q}^{(-m)} \to \mathrm D_{A/R,q^{p^r}}^{(-m+r)}, \quad \partial_{q}^{\langle n\rangle} \mapsto (p^r)_{q}^n\partial_{q^{p^r}}^{\langle n\rangle}
\]
for any $r \leq m$ if $x$ is also a $q^{p^{m-r}}$-coordinate.
\item
Giving a twisted connection (or a twisted derivation) of level $-m$ on an $A$-module $M$ is equivalent to giving the structure of a $\mathrm D_{A/R,q}^{(-m)}$-module.
For the trivial twisted connection of level $-m$ on $A$, we can check that
\[
 \partial_{q}^{\langle n \rangle}(f) = (p^m)_{q}^n \partial_{q^{p^m}}^{n}(f).
\]
\item It is not difficult to see that
\[
\mathrm R\Gamma_{dR,q(-m)}(M) = \mathrm {RHom}_{\mathrm D_{A/R,q}^{(-m)}}(A,M).
\]
\end{enumerate}
\end{rmks}

\begin{xmps}
\begin{enumerate}
\item In the case $m=0$, we fall back onto the ring $\mathrm D_{A/R,q}$ already considered in our previous articles.
\item
In the case $q=1$, then $\mathrm D_{A/R,q}^{(-m)}$ is the same thing as the ring $\mathrm D_{A/R}^{(-m)}$ of differential operators of level $-m$ introduced in definition 2.1 of \cite{Shiho15} (see also \cite{HuygheSchmidtStrauch17*}).
\item When $A$ is $(p)_{q}$-torsion-free and $x$ is also a $q$-coordinate, the map \eqref{incl} is injective, and we could then as well define $\mathrm D_{A/R,q}^{(-m)}$ as the $A$-subalgebra of $\mathrm D_{A/R,q^{p^m}}$ generated by $\partial_{q}^{\langle 1\rangle} := (p^m)_{q}\partial_{q^{p^m}}$.
\end{enumerate}
\end{xmps}

Recall that we introduced in definition \ref{pardef} the ring $A\langle \omega \rangle_{q(-m)}$ of twisted divided polynomials of level $-m$.
We will denote by $I_{A/R}^{\{n+1\}_{q}}$ the free $A$-module generated by $\omega^{\{ n' \}_{q}}$ for $n' > n$.
This defines an ideal filtration on $A\langle \omega \rangle_{q(-m)}$.

\begin{lem}
\begin{enumerate}
\item
The blowing up $\xi \mapsto (p^m)_{q}\omega$ induces a canonical morphism of $A$-algebras
\[
A[\xi]/(\xi^{(n+1)_{q^{p^m}}}) \to A\langle\omega\rangle_{q(-m)}/I_{A/R}^{\{n+1\}_{q}}.
\]
\item 
If $(p^m)_{q}^n (n)_{q^{p^m}}! \to 0$ in $A$, then the blowing up provides a canonical morphism of $A$-algebras
\[
A[[\xi]]_{q^{p^m}} := \varprojlim A[\xi]/(\xi^{(n+1)_{q^{p^m}}}) \to \widehat {A\langle\omega\rangle}_{q(-m)}, \quad \xi^{(n)_{q^{p^m}}} \mapsto (p^m)_{q}^n (n)_{q^{p^m}}! \omega^{\{n\}_{q}}
\]
(recall that completion is meant with respect to the adic topology of $A$).
\end{enumerate}
\end{lem}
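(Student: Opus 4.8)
The plan is to reduce everything to the key algebraic fact, already available, that the blowing-up map from Proposition \ref{propbl} is a morphism of rings sending $\xi^{[n]_{q^{p^m}}}$ to $(p^m)_{q}^n\omega^{\{n\}_{q}}$. For part (1), I would argue as follows. First observe that the ideal $I_{A/R}^{\{n+1\}_{q}}$ (the free submodule on $\omega^{\{n'\}_{q}}$ with $n'>n$) is indeed an ideal: this follows directly from the explicit multiplication formula recorded in the second Remark after Definition \ref{pardef}, since a product $\omega^{\{n_1\}_{q}}\omega^{\{n_2\}_{q}}$ is a combination of $\omega^{\{n_1+n_2-i\}_{q}}$ with $0\le i\le\min\{n_1,n_2\}$, and if say $n_1>n$ then every index $n_1+n_2-i\ge n_1-i+... $ — more carefully, $n_1+n_2-i \ge n_1 \ge n+1$ when $n_2\ge i$, which always holds. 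Hence $I_{A/R}^{\{n+1\}_{q}}$ absorbs multiplication. Next, under the ring map $A[\xi]\to A\langle\omega\rangle_{q(-m)}$, $\xi\mapsto(p^m)_{q}\omega$, the element $\xi^{(n+1)_{q^{p^m}}}=\prod_{i=0}^{n}(\xi+(1-q^{p^mi})x)$ maps into $I_{A/R}^{\{n+1\}_{q}}$: indeed $\xi^{(n+1)_{q^{p^m}}}$ is, up to the factor $(n+1)_{q^{p^m}}!$, the image of $\xi^{[n+1]_{q^{p^m}}}$, which Proposition \ref{propbl} sends to $(p^m)_{q}^{n+1}\omega^{\{n+1\}_{q}}\in I_{A/R}^{\{n+1\}_{q}}$; one must be slightly careful since $(n+1)_{q^{p^m}}!$ need not be invertible, but the divided polynomial ring is defined over an arbitrary base and one can check the containment after base change to $\mathbb{Q}(q)$ where all $q$-analogs are invertible, then conclude by naturality exactly as in the proofs of Lemmas \ref{bloex} and \ref{mid}. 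Therefore the ideal $(\xi^{(n+1)_{q^{p^m}}})$ maps into $I_{A/R}^{\{n+1\}_{q}}$ and we get the claimed morphism on quotients.

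For part (2), I would simply pass to the inverse limit. The left-hand side $A[[\xi]]_{q^{p^m}}$ is by definition $\varprojlim_n A[\xi]/(\xi^{(n+1)_{q^{p^m}}})$, and on the right we complete $A\langle\omega\rangle_{q(-m)}$ for the adic topology of $A$. The point is that the morphisms of part (1) are not quite compatible with a single target, so instead I would directly define the map on the explicit $A$-basis by $\xi^{(n)_{q^{p^m}}}\mapsto (p^m)_{q}^n(n)_{q^{p^m}}!\,\omega^{\{n\}_{q}}$ and check it extends continuously. Here the hypothesis $(p^m)_{q}^n(n)_{q^{p^m}}!\to 0$ in $A$ is exactly what guarantees that the image of the $n$-th basis element tends to $0$ in $\widehat{A\langle\omega\rangle}_{q(-m)}$, so that an arbitrary element $\sum_n a_n\xi^{(n)_{q^{p^m}}}$ (a priori a formal sum, i.e. an element of the projective limit) has convergent image $\sum_n a_n(p^m)_{q}^n(n)_{q^{p^m}}!\,\omega^{\{n\}_{q}}$. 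That this assignment is multiplicative follows from part (1) — or rather from its proof — since modulo each $I_{A/R}^{\{N\}_{q}}$ the map agrees with the ring homomorphism of (1) composed with $A[\xi]\to A[\xi]/(\xi^{(N)_{q^{p^m}}})$; passing to the limit preserves multiplicativity.

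The main obstacle, and the only place requiring genuine care, is the non-invertibility of the $q$-factorials: one cannot literally write $\xi^{[n]_{q^{p^m}}}=\xi^{(n)_{q^{p^m}}}/(n)_{q^{p^m}}!$ over a general base, so the identification of the image of $\xi^{(n+1)_{q^{p^m}}}$ as lying in $I_{A/R}^{\{n+1\}_{q}}$ must be done by the now-standard reduction to the generic case (universal coefficient ring, then invert, then use naturality) rather than by a direct formula. Everything else — that $I_{A/R}^{\{n+1\}_{q}}$ is an ideal, that the maps are compatible, that convergence holds in the complete ring — is a routine unwinding of the explicit multiplication formula and of the definition of the adic topology, and I would dispatch it briefly.
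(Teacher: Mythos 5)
Your proof is correct and takes essentially the same route as the paper: part (1) comes down to the fact that the composite of the canonical map $A[\xi]\to A\langle\xi\rangle_{q^{p^m}}$ with the blowing up of Proposition \ref{propbl} is a ring morphism sending $\xi^{(n+1)_{q^{p^m}}}$ to $(n+1)_{q^{p^m}}!\,(p^m)_{q}^{n+1}\omega^{\{n+1\}_{q}}\in I_{A/R}^{\{n+1\}_{q}}$, and part (2) uses the hypothesis $(p^m)_{q}^{n}(n)_{q^{p^m}}!\to 0$ to pass to the completed limit. Two minor remarks: the detour through $\mathbb Q(q)$ in (1) is unnecessary, since this composite is defined over an arbitrary base and its value on $\xi^{(n+1)_{q^{p^m}}}$ is immediate from the definitions; and in (2), your multiplicativity check modulo the filtration by the $I_{A/R}^{\{N\}_{q}}$ tacitly uses that this filtration is separated in $\widehat{A\langle\omega\rangle}_{q(-m)}$ (true, but worth saying), whereas the paper instead reduces modulo powers $\mathfrak a^{r(n)}$ of an ideal of definition of $A$, so that the map is a limit of ring morphisms and multiplicativity is automatic.
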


\begin{proof}
The first assertion follows from the fact that $(p^m)_{q}^{n} (n)_{q^{p^m}}! \omega^{\{n\}_{q}} \in I_{A/R}^{\{n\}_{q}}$ and the second one is obtained by taking the limit on both sides.
More precisely, our hypothesis implies the existence of an ordered function $n \mapsto r(n)$ and a canonical morphism
\[
A[\xi]/(\xi^{(n+1)_{q^{p^m}}}) \to A\langle\omega\rangle_{q(-m)}/\mathfrak a^{r(n)}A\langle\omega\rangle_{q(-m)}
\]
if $\mathfrak a$ is an ideal of definition in $A$.
\end{proof}

\begin{dfn} \label{deftayl}
\begin{enumerate}
\item
The \emph{twisted Taylor map level $-m$ and order $n$} of $A/R$ is the composite
\[
\xymatrix{
\theta_{n} : A \ar[r]^-{p_{2}} & P_{A/R} \ar[r] & \widehat P_{A/R}/\widehat I_{A/R}^{(n+1)_{q^{p^m}}} \ar[d]^-\simeq \\ && A[\xi]/(\xi^{(n+1)_{q^{p^m}}}) \ar[r] & {A\langle\omega\rangle}_{q(-m)}/I_{A/R}^{\{n+1\}_{q}}.
}
\]
\item
If $(p^m)_{q}^n (n)_{q^{p^m}}! \to 0$ in $A$, then the \emph{twisted Taylor map of level $-m$} of $A/R$ is the composite map
\[
\xymatrix@R=0cm{
\theta : A \ar[r]^-{p_{2}} & P_{A/R} \ar[r] & \varprojlim \widehat P_{A/R}/\widehat I_{A/R}^{(n+1)_{q^{p^m}}} \ar[r]^-\simeq & A[[\xi]]_{q^{p^m}} \ar[r] & \widehat {A\langle\omega\rangle}_{q(-m)}.
}
\]
\end{enumerate}
\end{dfn}

\begin{rmks}
\begin{enumerate}
\item
The twisted Taylor map of level $-m$ is a morphism of $R$-algebras that endows ${A\langle\omega\rangle}_{q(-m)}/I_{A/R}^{\{n+1\}_{q}}$ (resp. $\widehat {A\langle\omega\rangle}_{q(-m)}$) with a second structure of $A$-module commonly called the ``right'' structure (the usual one being the ``left'' structure).
\item
It will follow from proposition \ref{dual} below that the twisted Taylor map of level $-m$ is explicitly given by
\[
f \mapsto \sum \partial_{q}^{\langle i \rangle}(f)\omega^{\{i\}_{q}}.
\]
This actually provides an alternative definition.
\end{enumerate}
\end{rmks}

\begin{dfn}
A \emph{twisted differential operator of level $-m$ and order at most $n$} is an $A$-linear map
\[
u : A\langle \omega \rangle_{q(-m)}/I_{A/R}^{\{n+1\}_{q}} \otimes'_{A} M \to N
\]
(in which the $\otimes'$ indicates that we are using the ``right'' structure given by the twisted Taylor map of level $-m$ on the left hand side).
\end{dfn}

One can compose twisted differential operators of level $-m$ of different orders in the usual way using the diagonal maps
\begin{equation} \label{diagon2}
\xymatrix@R=0cm{
A\langle\omega\rangle_{q(-m)}/I_{A/R}^{\{n_{1} + n_{2} +1\}_{q}} \ar[rr]^-{\Delta_{n_{1},n_{2}}} && A\langle\omega\rangle_{q(-m)}/I_{A/R}^{\{n_{1}+1\}_{q}} \otimes'_{A}A\langle\omega\rangle_{q(-m)}/I_{A/R}^{\{n_{2}+1\}_{q}} \\
\omega^{\{i\}_{q}} \ar@{|->}[rr] && \sum_{i_{1}+i_{2}=i} \omega^{\{i_{1}\}_{q}} \otimes \omega^{\{i_{2}\}_{q}}.
}
\end{equation}
More precisely, the composition of two differential operators $u_{1}$ and $u_{2}$ of order at most $n_{1}$ and $n_{2}$ respectively is the differential operator of order at most $n_{1} + n_{2}$ defined by
\[
u_{1}u_{2}:= u_{1} \circ (\mathrm{Id} \otimes' u_{2}) \circ (\Delta_{n_{1},n_{2}} \otimes' \mathrm{Id}).
\]

\begin{prop} \label{dual}
The twisted differential operators of level $-m$ of all orders from $A$ to itself form a ring which is isomorphic to $D_{A/R,q}^{(-m)}$.
More precisely, the basis $\{\omega^{\{n\}_{q}}\}_{n \in \mathbb Z_{\geq 0}}$ of $A\langle \omega \rangle_{q(-m)}$ is ``topologically'' dual to the basis $\{\partial_{q}^{\langle n \rangle }\}_{n\in \mathbb Z_{\geq 0}}$ of $D_{A/R,q}^{(-m)}$.
\end{prop}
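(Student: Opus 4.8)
The plan is to set up an explicit $A$-bilinear pairing between $A\langle\omega\rangle_{q(-m)}$ and $\mathrm D_{A/R,q}^{(-m)}$ and check that it realizes the "topological duality" claimed, in the sense that the map sending a twisted differential operator $u\colon A\langle\omega\rangle_{q(-m)}/I_{A/R}^{\{n+1\}_{q}}\to A$ to its restriction along the twisted Taylor map identifies $u$ with the element $\sum_i u(\omega^{\{i\}_{q}})\,\partial_q^{\langle i\rangle}$ of $\mathrm D_{A/R,q}^{(-m)}$ of order $\leq n$. First I would define, for each $n$, the pairing $\langle -,-\rangle\colon \mathrm D_{A/R,q}^{(-m)}/(\text{order}>n)\times A\langle\omega\rangle_{q(-m)}/I_{A/R}^{\{n+1\}_{q}}\to A$ by declaring $\{\partial_q^{\langle i\rangle}\}$ and $\{\omega^{\{j\}_{q}}\}$ to be dual bases, i.e. $\langle\partial_q^{\langle i\rangle},\omega^{\{j\}_{q}}\rangle=\delta_{ij}$, extended $A$-linearly in the left variable (using the left structure) — the point being that this extends to a genuine operator $A\to A$ precisely because $A\langle\omega\rangle_{q(-m)}/I_{A/R}^{\{n+1\}_{q}}$ carries the right $A$-structure via $\theta_n$, and one reads off the action as $P\mapsto\big(f\mapsto\langle P,\theta_n(f)\rangle\big)$.

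The key steps, in order, are: (1) show the assignment $u\mapsto(u(\omega^{\{i\}_{q}}))_i$ is a bijection between $A$-linear maps $A\langle\omega\rangle_{q(-m)}/I_{A/R}^{\{n+1\}_{q}}\to A$ and the free $A$-module $\mathrm D_{A/R,q}^{(-m),\leq n}$ — this is immediate from freeness of $A\langle\omega\rangle_{q(-m)}/I_{A/R}^{\{n+1\}_{q}}$ on $\{\omega^{\{i\}_{q}}\}_{i\leq n}$; (2) verify that under this bijection the \emph{operator} attached to $u$ (via precomposition with $\theta_n$) satisfies the Leibniz rule defining a twisted differential operator, which by the remark after Definition \ref{deftayl} means computing $\theta_n(fg)$ and using that $\theta_n$ is a ring homomorphism — here the twisted Leibniz rule $\partial_q^{\langle 1\rangle}\circ f=(p^m)_q\partial_{q^{p^m}}(f)+\sigma^{p^m}(f)\partial_q^{\langle 1\rangle}$ must drop out of the multiplication formula on $A\langle\omega\rangle_{q(-m)}$ recorded in the Remarks after Definition \ref{pardef}; (3) check that the composition law for differential operators defined through the diagonal $\Delta_{n_1,n_2}$ of \eqref{diagon2} corresponds, under the pairing, exactly to the composition rule $\partial_q^{\langle n_1\rangle}\circ\partial_q^{\langle n_2\rangle}=\partial_q^{\langle n_1+n_2\rangle}$ — i.e. that $\Delta_{n_1,n_2}$ is the comultiplication dual to this multiplication; (4) pass to the limit over $n$ to get the ring isomorphism with all of $\mathrm D_{A/R,q}^{(-m)}$, noting the "topological" qualifier accounts for the fact that a general element of the dual of $\widehat{A\langle\omega\rangle}_{q(-m)}$ is an infinite sum $\sum_i a_i\partial_q^{\langle i\rangle}$ but each differential operator of \emph{finite} order at most $n$ is a finite such sum.

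The main obstacle I expect is step (3): verifying that the diagonal map $\omega^{\{i\}_{q}}\mapsto\sum_{i_1+i_2=i}\omega^{\{i_1\}_{q}}\otimes\omega^{\{i_2\}_{q}}$ is a well-defined morphism of $A$-algebras compatible with \emph{both} the left and right ($\theta_n$-induced) module structures, and that it is genuinely coassociative and dual to the additive comultiplication on the $\partial_q^{\langle n\rangle}$. The subtlety is that the multiplication on $A\langle\omega\rangle_{q(-m)}$ is the complicated twisted-divided-power product (with the $q^{p^m}$-binomial coefficients and the $(q-1)^ix^i$ correction terms), so one must check that $\Delta_{n_1,n_2}$ really does respect it; this is best done by the now-familiar reduction technique of the paper — reduce to $R=\mathbb Q(q)$, $A=\mathbb Q(q)[x]$, where $(p^m)_q$ and all $q$-factorials are invertible, $\omega^{\{n\}_{q}}=\xi^{(n)_{q^{p^m}}}/((p^m)_q^n(n)_{q^{p^m}}!)$, and the statement becomes a manipulation of honest (twisted) power series that can be checked directly, then invoke naturality to transport back. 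Once the divided-power Hopf-algebra-style bookkeeping is in place, the identification with $\mathrm D_{A/R,q}^{(-m)}$ and its ring structure follows formally.
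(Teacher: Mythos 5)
Your proposal follows essentially the same route as the paper: the paper's proof consists precisely of declaring the dual-basis pairing and checking that it yields a ring isomorphism $\mathrm D_{A/R,q}^{(-m)} \simeq \varinjlim_{n} \mathrm{Hom}_{A}\left(A\langle \omega \rangle_{q(-m)}/I_{A/R}^{\{n+1\}_{q}}, A\right)$, with the details left to the reader. Your steps (1)--(4), including the reduction to $\mathbb Q(q)$ for the compatibility of the diagonal with the twisted divided-power multiplication, are exactly the omitted verification carried out in the paper's spirit, so the argument is correct.
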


\begin{proof}
The point is to check that the bilinear map
\[
\mathrm D_{A/R,q}^{(-m)} \times A\langle \omega \rangle_{q(-m)}/I_{A/R}^{\{n+1\}_{q}} \to A, \quad (\partial_{q}^{\langle n \rangle}, \omega^{\{k\}}_{q}) \mapsto \left\{\begin{array}{l} 1 \ \mathrm{if}\ n=k \\ 0 \ \mathrm{otherwise}\end{array}\right.
\]
provides an isomorphism of rings
\[
\mathrm D_{A/R,q}^{(-m)} \simeq \varinjlim_{n} \mathrm{Hom}_{A}\left(A\langle \omega \rangle_{q(-m)}/I_{A/R}^{\{n+1\}_{q}}, A\right).
\]
Details are left to the reader.
\end{proof}

\begin{dfn}
A \emph{twisted Taylor structure of level $-m$} on an $A$-module $M$ is a compatible family of $A$-linear maps
\[
\theta_{n} : M \to M \otimes_{A} A\langle \omega \rangle_{q(-m)}/I_{A/R}^{\{n+1\}_{q}}
\]
such that
\[
\forall n_{1}, n_{2} \in \mathbb Z_{\geq 0}, \quad (\theta_{n_{1}} \otimes \mathrm{Id}_{A\langle \omega \rangle_{q(-m)}/I^{\{n_{2}+1\}_{q}}}) \circ \theta_{n_{2}} = (\mathrm{Id}_{M} \otimes \Delta_{n_{1},n_{2}}) \circ \theta_{n_{1}+n_{2}}.
\]
\end{dfn}

\begin{rmks}
\begin{enumerate}
\item
A twisted Taylor structure of level $-m$ is equivalent to a $\mathrm D_{A/R,q}^{(-m)}$-module structure via the formula
\[
\theta_{n}(s) = \sum_{i=0}^n \partial_{q}^{\langle i \rangle }s \otimes \omega^{\{i\}_{q}}.
\]
This is therefore also equivalent to a twisted connection (or derivation) of level $-m$.
\item One may define the \v Cech-Alexander cohomology of an $A$-module with a twisted Taylor structure of level $-m$ and show that this is isomorphic to de Rham cohomology.
\item
As in definition \ref{hyperq} below,  when $(p)_{q}^n (n)_{q^p}! \to 0$, there exists an ``hyper'' equivalent to the notion of a twisted Taylor structure of level $-m$ with $A\langle \omega \rangle_{q(-m)}/I_{A/R}^{\{n+1\}_{q}}$ replaced with of $\widehat {A\langle\omega\rangle}_{q(-m)}$.
\end{enumerate}
\end{rmks}

There remains one more notion to investigate (we denote by
\[
\Delta_{n} : A\langle \omega \rangle_{q(-m)}/I^{\{n+1\}_{q}} \to A
\]
the canonical map sending $\omega$ to $0$).

\begin{dfn} \label{hyperq}
\begin{enumerate}
\item 
A \emph{twisted stratification of level $-m$} on an $A$-module $M$ is a compatible family of $A\langle \omega \rangle_{q(-m)}/I^{\{n+1\}_{q}}$-linear isomorphisms
\[
\epsilon_{n} : A\langle \omega \rangle_{q(-m)}/I^{\{n+1\}_{q}} \otimes_{A}' M \simeq M \otimes_{A} A\langle \omega \rangle_{q(-m)}/I^{\{n+1\}_{q}}
\]
satisfying the normalization and cocycle conditions
\[
\Delta_{n}^*(\epsilon_{n}) = \mathrm{Id}_{M} \quad \mathrm{and} \quad \Delta_{n,n}^*(\epsilon_{n}) = p_{1}^*(\epsilon_{n}) \circ p_{2}^*(\epsilon_{n}).
\]
\item If $(p)_{q}^n (n)_{q^p}! \to 0$, then a \emph{twisted  hyper-stratification of level $-m$} on an $A$-module $M$ is an $\widehat{A\langle \omega \rangle}_{q(-m)}$-linear isomorphism
\[
\epsilon : \widehat{A\langle \omega \rangle}_{q(-m)} \otimes_{A}' M \simeq M \otimes_{A} \widehat{A\langle \omega \rangle}_{q(-m)}
\]
satisfying the normalization and cocycle conditions. 
\end{enumerate}
\end{dfn}

We will denote by $\mathrm{Strat}_{q}^{(-m)}(A/R)$ (resp. $\widehat{\mathrm{Strat}}_{q}^{(-m)}(A/R)$) the category of twisted stratified (resp.  hyper-stratified) modules of level $-m$.

\begin{rmks}
\begin{enumerate}
\item
The following are equivalent:
\begin{itemize}
\item a twisted connection (or derivation) of level $m$,
\item a structure of a $D_{A/R,q}^{(-m)}$-module,
\item a twisted Taylor structure of level $m$,
\item a twisted stratification of level $m$.
\end{itemize}
In particular, there exists an isomorphism of categories
\[
\mathrm{MIC}_{q}^{(-m)}(A/R) \simeq \mathrm{Strat}_{q}^{(-m)}(A/R).
\]
\item
Any twisted  hyper-stratification of level $-m$  induces a twisted stratification of level $-m$ and we get a fully faithful functor
\[
\widehat{\mathrm{Strat}}_{q}^{(-m)}(A/R) \to \mathrm{Strat}_{q}^{(-m)}(A/R) \simeq \mathrm{MIC}_{q}^{(-m)}(A/R).
\]
\end{enumerate}
\end{rmks}

\begin{prop}
When $M$ is finitely presented, a hyper-stratification of level $-m$ is equivalent to a \emph{topologically quasi-nilpotent} twisted connection of level $-m$, meaning that
\[
\forall s \in M, \quad \partial^{\langle k \rangle}(s) \to 0.
\]
\end{prop}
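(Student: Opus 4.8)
The plan is to run everything through the chain of equivalences recalled just above. Since the functor $\widehat{\mathrm{Strat}}_q^{(-m)}(A/R)\to\mathrm{MIC}_q^{(-m)}(A/R)$ is fully faithful, it suffices to identify its essential image among finitely presented modules, and via the isomorphism $\mathrm{MIC}_q^{(-m)}(A/R)\simeq\mathrm{Strat}_q^{(-m)}(A/R)$ this amounts to showing: for finitely presented $M$, the stratification $\{\epsilon_n\}_n$ — equivalently the twisted Taylor structure $\{\theta_n\}_n$, equivalently the connection with operators $\partial^{\langle i\rangle}$ — attached to $M$ extends to a hyper-stratification of level $-m$ if and only if $\partial^{\langle k\rangle}(s)\to 0$ in $M$ for every $s$. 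The single structural ingredient I would set up first is the identification, valid because $M$ is finitely presented over the complete adic ring $A$, of $M\otimes_A\widehat{A\langle\omega\rangle}_{q(-m)}$ with the $\mathfrak a$-adic completion of $\bigoplus_{i\geq 0}M\omega^{\{i\}_q}$, that is, with the module of series $\sum_i m_i\omega^{\{i\}_q}$ whose coefficients satisfy $m_i\to 0$ in $M$; together with the fact that its reduction modulo $I^{\{n+1\}_q}$ is $M\otimes_A A\langle\omega\rangle_{q(-m)}/I^{\{n+1\}_q}$.

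For the direction "hyper-stratification $\Rightarrow$ quasi-nilpotent connection", I would take a hyper-stratification $\epsilon$ on $M$: reducing modulo $I^{\{n+1\}_q}$ it induces the stratifications $\epsilon_n$ of the underlying connection, so the $R$-linear map $\theta : M\to M\otimes_A\widehat{A\langle\omega\rangle}_{q(-m)}$, $s\mapsto\epsilon(1\otimes' s)$, reduces to $\theta_n$ for every $n$. By the remark following the definition of a twisted Taylor structure, $\theta_n(s)=\sum_{i\leq n}\partial^{\langle i\rangle}(s)\otimes\omega^{\{i\}_q}$, hence $\theta(s)=\sum_i\partial^{\langle i\rangle}(s)\otimes\omega^{\{i\}_q}$; since this element lies in $M\otimes_A\widehat{A\langle\omega\rangle}_{q(-m)}$, the identification above forces $\partial^{\langle i\rangle}(s)\to 0$.

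For the converse, starting from a finitely presented $\mathrm D_{A/R,q}^{(-m)}$-module $M$ with $\partial^{\langle i\rangle}(s)\to 0$ for all $s$, the series $\theta(s):=\sum_i\partial^{\langle i\rangle}(s)\otimes\omega^{\{i\}_q}$ converges by the identification, yielding an $R$-linear $\theta : M\to M\otimes_A\widehat{A\langle\omega\rangle}_{q(-m)}$ that reduces to each $\theta_n$. I would then linearize $\theta$ over $\widehat{A\langle\omega\rangle}_{q(-m)}$ — which is legitimate over $\otimes'_A$ precisely because of the twisted Leibniz rule, i.e. because the twisted Taylor map of level $-m$ of $A$ sends $f$ to $\sum_i\partial^{\langle i\rangle}(f)\omega^{\{i\}_q}$ — to obtain a continuous $\widehat{A\langle\omega\rangle}_{q(-m)}$-linear map $\epsilon : \widehat{A\langle\omega\rangle}_{q(-m)}\otimes'_A M\to M\otimes_A\widehat{A\langle\omega\rangle}_{q(-m)}$. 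The normalization and cocycle conditions for $\epsilon$ are the limits of the corresponding identities for the $\epsilon_n$, which hold because $M$ is a $\mathrm D_{A/R,q}^{(-m)}$-module, and they pass to the limit by continuity. It then remains to see that $\epsilon$ is bijective: each reduction $\epsilon\bmod I^{\{n+1\}_q}$ equals $\epsilon_n$, hence is an isomorphism (via $\mathrm{MIC}\simeq\mathrm{Strat}$), so injectivity of $\epsilon$ is immediate, and for surjectivity one transports $\eta=\sum_i m_i\omega^{\{i\}_q}$ (with $m_i\to 0$) to the compatible family $\epsilon_n^{-1}(\eta\bmod I^{\{n+1\}_q})$ and must check that it lies in $\widehat{A\langle\omega\rangle}_{q(-m)}\otimes'_A M$ — equivalently, that the "inverse" twisted connection, whose operators $\partial^{\prime\langle i\rangle}$ are read off from the $\epsilon_n^{-1}$, is again topologically quasi-nilpotent.

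This last point is where I expect the real work to be: extracting from the finite-level counit and cocycle identities (equivalently from $\epsilon_n\circ\epsilon_n^{-1}=\mathrm{Id}$) the recursion expressing $\partial^{\prime\langle i\rangle}$ in terms of the $\partial^{\langle j\rangle}$, whose structure constants are the explicit $q$-analogs occurring in the multiplication of the $\omega^{\{n\}_q}$ and hence lie in the (bounded) ring $A$, and then showing by induction that $\partial^{\langle i\rangle}(s)\to 0$ for all $s$ propagates to $\partial^{\prime\langle i\rangle}(s)\to 0$ for all $s$. Everything else — the forward direction, the constructions of $\theta$ and $\epsilon$, and the verification of the normalization and cocycle identities — is formal once the description of $M\otimes_A\widehat{A\langle\omega\rangle}_{q(-m)}$ for finitely presented $M$ is in place, and finite presentation of $M$ is used only there (finite generation would not suffice). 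Alternatively, when the level-raising functor is available as an equivalence (for $m=1$ this is theorem~\ref{mainthm}), one can bypass the bijectivity argument by transporting the statement to level $0$ and invoking the corresponding assertion of \cite{GrosLeStumQuiros20}.
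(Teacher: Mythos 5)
Your argument is, in substance, the standard Taylor-series argument that the paper itself invokes (its proof is literally a pointer to proposition 6.3 of \cite{GrosLeStumQuiros20}): for $M$ finitely presented, identify $M \otimes_{A} \widehat{A\langle\omega\rangle}_{q(-m)}$ with series $\sum_{i} s_{i} \otimes \omega^{\{i\}_{q}}$ whose coefficients tend to $0$, read off quasi-nilpotence from a hyper-stratification via $\epsilon(1\otimes' s)=\sum_{i}\partial_{q}^{\langle i\rangle}(s)\otimes\omega^{\{i\}_{q}}$, and conversely use convergence of this series to linearize and pass to the limit in the finite-order normalization and cocycle identities. So the forward direction, the construction of $\epsilon$, and the role of finite presentation are exactly as intended.

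The one step you defer, surjectivity of $\epsilon$, is also the only substantive content of the proposition, so your proposal is a correct plan rather than a complete proof at precisely the point where the cited proposition does its work. Your diagnosis is right: since $\varprojlim_{n}\bigl(A\langle\omega\rangle_{q(-m)}/I^{\{n+1\}_{q}}\otimes'_{A}M\bigr)$ consists of series with \emph{arbitrary} coefficients while $\widehat{A\langle\omega\rangle}_{q(-m)}\otimes'_{A}M$ requires coefficients tending to $0$, everything comes down to convergence of the coefficients of the compatible family $\epsilon_{n}^{-1}$. But the induction on new operators $\partial'^{\langle i\rangle}$ you envisage can be avoided: at each finite order $\epsilon_{n}$ respects the (nilpotent) augmentation filtration by the images of $I^{\{k\}_{q}}$ and induces the identity on the associated graded, so $\epsilon_{n}^{-1}$ is obtained by a finite unipotent inversion whose coefficients are universal $A$-linear combinations of composites of the \emph{same} operators $\partial_{q}^{\langle j\rangle}$; the hypothesis $\partial_{q}^{\langle j\rangle}(s)\to 0$, applied to the finitely many generators of $M$, then yields the required convergence directly, with no fresh quasi-nilpotence statement to establish. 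Finally, the alternative route through theorem \ref{mainthm} only covers $m=1$ and ultimately appeals to the very same level-zero statement of \cite{GrosLeStumQuiros20} that the paper cites (and the theorem occurs later in the paper), so it is not really an independent shortcut.
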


\begin{proof}
Standard (see the proof of proposition 6.3 in \cite{GrosLeStumQuiros20}, for example).
\end{proof}

\section{Level raising and Frobenius descent}

We fix a prime $p$ and a non-negative integer $m$.
All commutative rings are assumed to be $\mathbb Z[q]_{(p,q-1)}$-algebras and are endowed with their $(p, q-1)$-adic topology.
Let $R$ be a $(p)_{q}$-torsion free $\delta$-ring such that $q := q1_{R}$ has rank one and $A$ a complete $R$-algebra with fixed topologically \'etale coordinate $x$ (see definition \ref{topet}).

We saw in propositions \ref{liftsig} and \ref{frolifp} that, in this situation, there exists on $A$ a unique structure of twisted $R$-algebra such that $x$ is a $q$-coordinate and a unique $\delta$-structure such that $x$ has rank one.
Note that $x$ is also a $q^{p^m}$-coordinate, i.e. a coordinate with respect to $\sigma^{p^m}$ where $\sigma$ denotes the endomorphism of $A$.
Also, the construction applies as well to $A' := R {}_{{}_{\phi}\nwarrow}\!\!\widehat\otimes_{R} A$ with $x$ replaced by $x':=1 {}_{{}_{\phi}\nwarrow}\!\!\widehat\otimes x$.
Finally, recall from proposition \ref{frolif} that the relative frobenius $F : A' \to A$ is free on the generators $1, x, \ldots, x^{p-1}$.

We will need below the following commutation rules:

\begin{lem} \label{commute} 
\begin{enumerate}
\item
$F\circ \sigma^{p^{m}} = \sigma^{p^{m-1}} \circ F$
\item
$(p)_{q^{p^{m-1}}}x^{p-1} F \circ \partial_{q^{p^{m}}} = \partial_{q^{p^{m-1}}} \circ F$.
\end{enumerate}
\end{lem}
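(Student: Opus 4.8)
The plan is to prove the first identity directly and then to deduce the second from it.

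First I would treat (1). Both $F\circ\sigma^{p^{m}}$ and $\sigma^{p^{m-1}}\circ F$ are continuous homomorphisms of $R$-algebras $A'\to A$, so, exactly as in the proofs of Propositions \ref{liftsig} and \ref{frolif} (reducing to the polynomial case, where a continuous $R$-algebra map out of $A'$ is determined by the image of the coordinate $x'$), it is enough to check them on $x'$. I would then compute $F(\sigma^{p^{m}}(x'))=F(q^{p^{m}}x')=q^{p^{m}}x^{p}$ on one side, and $\sigma^{p^{m-1}}(F(x'))=\sigma^{p^{m-1}}(x^{p})=(q^{p^{m-1}}x)^{p}=q^{p^{m}}x^{p}$ on the other, which settles (1).

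For (2), write $D_{1}$ and $D_{2}$ for the two sides; both are $R$-linear maps $A'\to A$, and I would regard $A$ as an $A'$-module through $F$. Using that $F$ is a ring homomorphism and that $\partial_{q^{p^{m}}}$ is a $\sigma^{p^{m}}$-derivation of $A'$, a short computation shows $D_{1}(ab)=D_{1}(a)F(b)+F(\sigma^{p^{m}}(a))D_{1}(b)$ for $a,b\in A'$; and using that $\partial_{q^{p^{m-1}}}$ is a $\sigma^{p^{m-1}}$-derivation of $A$ \emph{together with} part (1) in the form $\sigma^{p^{m-1}}\circ F=F\circ\sigma^{p^{m}}$, the same computation shows $D_{2}(ab)=D_{2}(a)F(b)+F(\sigma^{p^{m}}(a))D_{2}(b)$. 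Hence $D_{1}$ and $D_{2}$ are both $\sigma^{p^{m}}$-derivations from $A'$ into $A$ (for the $A'$-action via $F$); since $x'$ is a $q^{p^{m}}$-coordinate, $\Omega_{A'/R,q^{p^{m}}}$ is free of rank one on $\mathrm d_{q^{p^{m}}}x'$, so such a map is determined by its value on $x'$. It then remains to note $D_{1}(x')=(p)_{q^{p^{m-1}}}x^{p-1}F(\partial_{q^{p^{m}}}(x'))=(p)_{q^{p^{m-1}}}x^{p-1}$ (since $\partial_{q^{p^{m}}}(x')=1$) and $D_{2}(x')=\partial_{q^{p^{m-1}}}(F(x'))=\partial_{q^{p^{m-1}}}(x^{p})=(p)_{q^{p^{m-1}}}x^{p-1}$, so $D_{1}=D_{2}$.

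The step I expect to be the real (if modest) obstacle is the bookkeeping: keeping track of whether each $\sigma^{p^{j}}$ and each $\partial_{q^{p^{j}}}$ is being taken on $A$ or on $A'$, and recognising that part (1) is precisely what makes the twisted Leibniz rules for $D_{1}$ and $D_{2}$ coincide, so that the reduction of (2) to the single value at $x'$ is legitimate. The reduction-to-the-coordinate arguments themselves are of the same routine nature as those already used above for maps into complete adic rings endowed with a topologically étale coordinate.
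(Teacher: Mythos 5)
Your proof is correct and follows essentially the same route as the paper: part (1) is checked on the coordinate $x'$ via the uniqueness of a continuous morphism $A'\to A$ determined by its value on the topologically étale coordinate, and part (2) is proved by showing both sides are $\sigma^{p^{m}}$-derivations from $A'$ to $A$ (viewed as an $A'$-module via $F$), using part (1) exactly where you indicate, and then comparing values on $x'$.
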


\begin{proof}
\begin{enumerate}
\item
Since both $p$ and $q-1$ are topologically nilpotent and $x'$ is a topologically \'etale coordinate on $A'$, there exists a unique morphism $A' \to A$ sending $x'$ to $q^{p^m}x^p$.
Now, we compute
\[
(F \circ \sigma^{p^m})(x') =F(q^{p^m}x') = q^{p^{m}}x^p
\]
and
\[
(\sigma^{p^{m-1}} \circ F)(x') = \sigma^{p^{m-1}}(x^p) = (\sigma^{p^{m-1}}(x))^p =  (q^{p^{m-1}}x)^p  = q^{p^{m}}x^p.
\]
\item
We first show that both maps are actually $q^{p^{m}}$-derivations from $A'$ to $A$ (seen as an $A'$-module via $F$).
In other words, they must both satisfy
\[
\forall f,g \in A', \quad D(fg) = F(f) D(g)+ F(\sigma^{p^m}(g))D(f).
\]
For the left hand side, this is automatic because we are composing a $q^{p^m}$-derivation of $A'$ with an $A'$-linear map.
For the right hand side, this follows from the first part of the lemma because
\begin{align*}
(\partial_{q^{p^{m-1}}} \circ F)(fg) & = \partial_{q^{p^{m-1}}}(F(f)F(g))\\
&= F(f)\partial_{q^{p^{m-1}}}(F(g)) + \sigma^{p^{m-1}}(F(g))\partial_{q^{p^{m-1}}}(F(f)) 
\\& = F(f)(\partial_{q^{p^{m-1}}} \circ F)(g) + F(\sigma^{p^{m}}(g))(\partial_{q^{p^{m-1}}} \circ F)(f) .
\end{align*}
Since $\Omega_{A'/R,q^{p^m}}$ is free on $\mathrm d_{q^{p^m}}x'$, a $q^{p^m}$-derivation of $A'$ is determined by its value on $x'$.
Now, we compute
\[
(p)_{q^{p^{m-1}}}x^{p-1} (F \circ \partial_{q^{p^{m}}})(x') = (p)_{q^{p^{m-1}}} x^{p-1}
\]
and
\[
 (\partial_{q^{p^{m-1}}} \circ F)(x') = \partial_{q^{p^{m-1}}}(x^p) = (p)_{q^{p^{m-1}}}x^{p-1}. \qedhere
\]
\end{enumerate}
\end{proof}

We can raise the level of a twisted derivation as follows:

\begin{prop} \label{pullb}
If $\theta'$ is a twisted derivation of level $-m$ on an $A'$-module $M'$, then there exists a unique derivation $\theta$ of level $-m+1$ on $M := A {}_{{}_{F}\nwarrow}\!\!\otimes_{A'} M'$ such that
\[
\forall s \in M', \quad \theta(1 \otimes s) = x^{p-1} \otimes \theta'(s).
\]
\end{prop}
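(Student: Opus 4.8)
The plan is to prove uniqueness formally and then to \emph{construct} $\theta$ directly, the only genuine computation being a balancing check that forces the factor $x^{p-1}$.

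\textbf{Uniqueness.} By proposition \ref{frolif} the relative frobenius $F : A' \to A$ is free of rank $p$, so every element of $M = A \otimes_{A'} M'$ (where $A$ is regarded as an $A'$-algebra through $F$) is an $A$-linear combination of elements $1 \otimes s$ with $s \in M'$. Since a twisted derivation of level $-m+1$ is $R$-linear and satisfies $\theta(g t) = (p^{m-1})_{q}\,\partial_{q^{p^{m-1}}}(g)\, t + \sigma^{p^{m-1}}(g)\theta(t)$, it is determined by its values on the elements $1 \otimes s$; hence the prescription $\theta(1 \otimes s) = x^{p-1}\otimes \theta'(s)$ leaves no freedom.

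\textbf{Existence.} Consider the biadditive map $\tilde\theta : A \times M' \to M$ defined by
\[
\tilde\theta(g, s) := (p^{m-1})_{q}\,\partial_{q^{p^{m-1}}}(g)\otimes s \;+\; \sigma^{p^{m-1}}(g)\, x^{p-1}\otimes \theta'(s).
\]
To obtain an $R$-linear map $\theta : M \to M$ it suffices to check that $\tilde\theta$ is balanced over $A'$, i.e.\ that $\tilde\theta(g F(a), s) = \tilde\theta(g, a s)$ for all $a \in A'$. Expanding the left-hand side by the twisted Leibniz rule for $\partial_{q^{p^{m-1}}}$ and by multiplicativity of $\sigma^{p^{m-1}}$, then substituting $\sigma^{p^{m-1}} \circ F = F \circ \sigma^{p^m}$ and $\partial_{q^{p^{m-1}}} \circ F = (p)_{q^{p^{m-1}}}x^{p-1}\,(F \circ \partial_{q^{p^m}})$ from lemma \ref{commute} and using the elementary identity $(p^{m-1})_{q}\,(p)_{q^{p^{m-1}}} = (p^m)_{q}$, the expression collapses to
\[
(p^{m-1})_{q}\,\partial_{q^{p^{m-1}}}(g)\otimes a s \;+\; \sigma^{p^{m-1}}(g)\, x^{p-1}\otimes\bigl((p^m)_{q}\,\partial_{q^{p^m}}(a)\,s + \sigma^{p^m}(a)\,\theta'(s)\bigr);
\]
note that the $x^{p-1}$ produced by lemma \ref{commute}(2) is precisely absorbed by the $x^{p-1}$ built into the definition of $\tilde\theta$. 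By the defining Leibniz rule of the level $-m$ derivation $\theta'$, the parenthesised term equals $\theta'(a s)$, so the whole expression is $\tilde\theta(g, a s)$, as required, and $\tilde\theta$ descends to an additive map $\theta : M \to M$.

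\textbf{Conclusion.} Since $\partial_{q^{p^{m-1}}}(1) = 0$ and $\sigma^{p^{m-1}}(1) = 1$ we get $\theta(1 \otimes s) = x^{p-1}\otimes \theta'(s)$. That $\theta$ satisfies the level $-m+1$ Leibniz rule is verified on a generator $g \otimes s$ using once more the twisted Leibniz rule for $\partial_{q^{p^{m-1}}}$ and multiplicativity of $\sigma^{p^{m-1}}$, and then extends to all of $M$ by additivity; $R$-linearity is clear since $\partial_{q^{p^{m-1}}}$, $\sigma^{p^{m-1}}$ and $\theta'$ are $R$-linear. The single delicate point is the balancing computation above: the commutation relations of lemma \ref{commute}, together with the factorisation $(p^m)_{q} = (p^{m-1})_{q}(p)_{q^{p^{m-1}}}$, are exactly what make it work, and they are also what single out $x^{p-1}$ as the correct coefficient.
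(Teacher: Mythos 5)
Your proof is correct and follows essentially the same route as the paper's: the same forced formula $\theta(f\otimes s) = (p^{m-1})_{q}\,\partial_{q^{p^{m-1}}}(f)\otimes s + x^{p-1}\sigma^{p^{m-1}}(f)\otimes\theta'(s)$, well-definedness via lemma \ref{commute} together with $(p^{m-1})_{q}(p)_{q^{p^{m-1}}} = (p^m)_{q}$, and a final Leibniz-rule check. The only (harmless) difference is presentational: you verify full $A'$-middle-linearity of the biadditive map, whereas the paper checks the case $g=1$ and leaves the reduction implicit.
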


\begin{proof}
Necessarily, we will have for $f \in A$ and $s \in M'$,
\begin{align*}
\theta(f \otimes s) &= (p^{m-1})_{q} \partial_{q^{p^{m-1}}}(f) \otimes s + x^{p-1}\sigma^{p^{m-1}}(f) \otimes \theta'(s)
\end{align*}
and we first need to show that this is well defined.
On the one hand, we have for $f \in A'$,
\begin{align*}
\theta(1 \otimes fs) &=x^{p-1} \otimes \theta'(fs)
\\ & = x^{p-1} \otimes (p^m)_{q} \partial_{q^{p^m}}(f)s + x^{p-1} \otimes \sigma^{p^m}(f)\theta'(s) \\
& = (p^m)_{q} x^{p-1} F(\partial_{q^{p^m}}(f)) \otimes s + x^{p-1} F(\sigma^{p^m}(f))\otimes \theta'(s)
\end{align*}
and on the other hand, we have
\begin{align*}
\theta(F(f) \otimes s) =(p^{m-1})_{q} \partial_{q^{p^{m-1}}}(F(f)) \otimes s + x^{p-1}\sigma^{p^{m-1}}(F(f)) \otimes \theta'(s).
\end{align*}
We may therefore rely on lemma \ref{commute} because $(p^{m-1})_{q} (p)_{q^{p^{m-1}}} = (p^m)_{q}$. 
It only remains to show that $\theta$ is a twisted derivation:
\begin{align*}
\theta(fg\otimes s) & = (p^{m-1})_{q} \partial_{q^{p^{m-1}}}(fg) \otimes s + x^{p-1}\sigma^{p^{m-1}}(fg) \otimes \theta'(s) \\
&= (p^{m-1})_{q} \partial_{q^{p^{m-1}}}(f)g \otimes s + (p^{m-1})_{q} \sigma^{p^{m-1}}(f)\partial_{q^{p^{m-1}}}(g) \otimes s \\
&+ x^{p-1}\sigma^{p^{m-1}}(f)\sigma^{p^{m-1}}(g)\otimes \theta'(s) \\
&= (p^{m-1})_{q} \partial_{q^{p^{m-1}}}(f)g \otimes s \\
&+\sigma^{p^{m-1}}(f)\left( (p^{m-1})_{q} \partial_{q^{p^{m-1}}}(g) \otimes s + x^{p-1}\sigma^{p^{m-1}}(g)\otimes \theta'(s)\right) \\
&=(p^{m-1})_{q} \partial_{q^{p^{m-1}}}(f)g \otimes s +\sigma^{p^{m-1}}(f)\theta(g \otimes s). \qedhere
\end{align*}
\end{proof}

\begin{rmks}
\begin{enumerate}
\item
Alternatively, a twisted connection $\nabla'$ of level $-m$ on $M'$ gives rise to a twisted connection of level $-m+1$ on $M := A {}_{{}_{F}\nwarrow}\!\!\otimes_{A'} M'$.
\item
This may also be interpreted in terms of $\mathcal D$-modules but the functor does \emph{not} come from a morphism between rings of differential operators.
It will be necessary to consider hyper-stratifications to go further.
\end{enumerate}
\end{rmks}

\begin{dfn} 
The functor
\begin{equation}\label{levlr}
F^* : \mathrm{MIC}_{q}^{(-m)}(A'/R) \to \mathrm{MIC}_{q}^{(-m+1)}(A/R), \quad (M', \theta') \mapsto (M, \theta)
\end{equation}
is the \emph{level raising} functor for twisted connections of negative level.
\end{dfn}

\begin{rmks}
\begin{enumerate}
\item
We showed in corollary 8.9 of \cite{GrosLeStumQuiros19} that the level raising functor 
\begin{equation}\label{levlr0}
F^* : \mathrm{MIC}_{q}^{(-1)}(A'/R) \to \mathrm{MIC}_{q}(A/R), \quad (M', \theta') \mapsto (M, \theta)
\end{equation}
induces an equivalence on topologically quasi-nilpotent objects objects killed by $(p)_{q}$.
This does not extend to higher $m$.
\item
Shiho proved in theorem 3.1 of \cite{Shiho15} that the level rising functor \eqref{levlr0} is also an equivalence when $q=1$ (the untwisted case) on topologically quasi-nilpotent objects.
Again, this does not extend to higher $m$.
\end{enumerate}
\end{rmks}

Up to the end of the section, we focus on the case $m=1$.

The divided frobenius introduced in definition \ref{divfrob} is $F$-linear with respect to the ``left'' structure.
One can show that it is also $F$-linear with respect to the ``right'' structure.
More precisely, if we denote by
\[
[F]_{n} : A'\langle \omega \rangle_{q(-1)}/I^{\{n+1\}_{q}} \to A\langle\xi \rangle_{q}/I^{[n+1]} \quad \left(\mathrm{resp.}\ \widehat{[F]} : \widehat{A'\langle\omega \rangle}_{q(-1)} \to \widehat{A\langle\xi \rangle}_{q} \right) 
\]
the \emph{divided frobenius of order $n$} (resp. the \emph{completed divided frobenius}), then we have:

\begin{lem}
The diagrams
\[
\xymatrix{
A' \ar[r]^-{\theta_{n}} \ar[d]^F & A'\langle \omega \rangle_{q(-1)}/I^{\{n+1\}_{q}} \ar[d]^{[F]_{n}} \\
A \ar[r]^-{\theta_{n}} &A\langle\xi \rangle_{q}/I^{[n+1]}
}
\quad \mathrm{and} \quad 
\xymatrix{
A' \ar[r]^-{\theta} \ar[d]^F & \widehat {A'\langle\omega\rangle}_{q(-1)} \ar[d]^{\widehat {[F]}} \\
A \ar[r]^-{\theta} & \widehat {A\langle \xi \rangle}_{q}
}
\]
are commutative.
\end{lem}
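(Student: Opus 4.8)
The plan is to deduce the commutativity of each square from the equality of two \emph{continuous morphisms of $R$-algebras} out of $A'$, and then to pin these down by their effect on the single element $x'$. Indeed $[F]_n$ and $\widehat{[F]}$ are $R$-linear ring homomorphisms (they are $F$-linear for the ``left'' structures and $F$ restricts to the identity on $R$), $F\colon A'\to A$ is the continuous $R$-linear relative Frobenius, and the twisted Taylor maps $\theta_n,\theta$ are morphisms of $R$-algebras; hence $[F]_n\circ\theta_n$ and $\theta_n\circ F$ (resp.\ $\widehat{[F]}\circ\theta$ and $\theta\circ F$) are both continuous morphisms of $R$-algebras from $A'$ to the complete adic $R$-algebra $A\langle\xi\rangle_q/I^{[n+1]}$ (resp.\ $\widehat{A\langle\xi\rangle}_q$), a ring in which $q-1$ is topologically nilpotent. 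As in the proofs of Propositions \ref{liftsig} and \ref{frolif} and of Lemma \ref{commute}, such a morphism is determined by the image of the topologically \'etale coordinate $x'$ (one reduces modulo a power of $q-1$ and uses that $R[X']\to A'$ is formally \'etale, the two maps having the same reduction modulo $q-1$). So it suffices to check the equality on $x'$.

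I would then run the following computation for the left square; the right one is identical, with $\theta_n,[F]_n$ replaced by $\theta,\widehat{[F]}$ and the finite-order quotients by the completions. By the definition of the twisted Taylor map of level $-1$ and order $n$, $x'$ maps via $p_2$ to $x'+\xi$ and then, through the blowing up $\xi\mapsto (p)_q\omega$, to $\theta_n(x')=x'+(p)_q\omega^{\{1\}_q}$ in $A'\langle\omega\rangle_{q(-1)}/I^{\{n+1\}_q}$; likewise $\theta_n(x)=x+\xi^{[1]}$ in $A\langle\xi\rangle_q/I^{[n+1]}$. Now $[F]_n$ extends $F\colon A'[\xi]\to A[\xi]$ through that blowing up (Theorem \ref{propdivf}): it sends $x'$ to $F(x')=x^p$ (because $\delta(x)=0$) and it sends $(p)_q\omega^{\{1\}_q}$, the image of $\xi$, to the image of $F(\xi)=(x+\xi)^p-x^p$ under the canonical ring map $A[\xi]\to A\langle\xi\rangle_q$. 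Since that map carries $\xi^i$ to $(\xi^{[1]})^i$, the image of $(x+\xi)^p-x^p$ equals $(x+\xi^{[1]})^p-x^p$ by the binomial theorem in the commutative ring $A\langle\xi\rangle_q/I^{[n+1]}$. Therefore
\[
[F]_n(\theta_n(x')) = x^p + \left((x+\xi^{[1]})^p-x^p\right) = (x+\xi^{[1]})^p = \theta_n(x)^p = \theta_n(x^p) = \theta_n(F(x')),
\]
which is exactly the required identity.

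I expect the only delicate points to be bookkeeping rather than anything deep: one must keep the ``left'' and ``right'' $A$-module structures apart and remember the normalisation $\omega^{\{1\}_q}=\xi/(p)_q$ built into the blowing up, so that $\theta_n(x')$ and $[F]_n\big((p)_q\omega^{\{1\}_q}\big)$ come out with the correct constants; and one must make sure the rigidity step really applies, i.e.\ that both composites are continuous $R$-algebra maps with the same reduction modulo $q-1$. Alternatively one can bypass the rigidity step by combining the explicit form $\theta_n(f)=\sum_i\partial_q^{\langle i\rangle}(f)\,\omega^{\{i\}_q}$ of the Taylor map (see the remark after Proposition \ref{dual}) with the $F$-linearity of $[F]$ for the ``left'' structure, reducing to the operator identity $\partial_q^{[i]}\circ F=\sum_j b_{j,i}\,x^{pj-i}\,F\circ\partial_q^{\langle j\rangle}$, which itself comes down to the same one-variable computation; but the check on $x'$ above is the most economical. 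In any case the substantive work has already been carried out in the construction and naturality of the divided Frobenius in Theorem \ref{propdivf}, and the present statement only records that $[F]$ is compatible with the canonical maps $p_2$ on the two sides.
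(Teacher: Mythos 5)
Your evaluation at the coordinate is fine: $\theta_n(x')=x'+(p)_q\omega^{\{1\}_q}$, $[F]_n(x')=x^p$, and by theorem \ref{propdivf} the element $(p)_q\omega^{\{1\}_q}$ is sent to the image of $F(\xi)=(x+\xi)^p-x^p$, so both composites take $x'$ to $(x+\xi^{[1]})^p$. The gap is in the rigidity step you use to conclude. Two continuous $R$-algebra maps out of $A'$ that agree on the topologically \'etale coordinate $x'$ are \emph{not} determined by that value alone (think of $A'$ obtained by adjoining a square root of $1+x'$ \'etale-locally: two maps can agree on $x'$ and differ on the root). Formal \'etaleness of $R[X']\to A'$ only gives uniqueness of lifts along topologically nilpotent thickenings, so your argument additionally needs the two composites to agree modulo a topologically nilpotent ideal, and this is exactly the point you dispose of in a parenthesis ("the two maps having the same reduction modulo $q-1$"). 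Modulo $q-1$ alone this agreement is not a formality: it is precisely the untwisted ($q=1$) instance of the lemma being proved (Berthelot/Shiho's divided-Frobenius compatibility), so it cannot be invoked as obvious. Modulo the full ideal of definition $(p,q-1)$ the agreement does hold, but it requires an argument you do not give: on the level $-1$ side the image of $I_{A'/R}$ is generated by $(p)_q\omega$, hence lies in $(p,q-1)$, so $[F]_n\circ\theta_n$ reduces to $f\mapsto F(f)$; on the level $0$ side one must check that $\theta_n\circ F$ also reduces to $F$, which uses that $F$ lifts the relative Frobenius mod $p$ together with the fact that $(\xi^{[1]})^p=(p)_q!\,\xi^{[p]}+(q-1)(\cdots)$ vanishes mod $(p,q-1)$, so that $\theta_n(g)^p\equiv g^p$ there. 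Without these inputs the proof as written does not close; with them it does, but they are the substantive part, not bookkeeping. (Your fallback via the operator identity $\partial^{[i]}\circ F=\sum_j b_{j,i}x^{pj-i}F\circ\partial_q^{\langle j\rangle}$ has the same issue: it does not reduce to a one-variable check without a similar density/rigidity argument.)

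For comparison, the paper avoids coordinates and rigidity altogether: it simply unwinds definition \ref{deftayl}, writing $\theta$ as the composite $A\to P_{A/R}\to\varprojlim \widehat P_{A/R}/\widehat I_{A/R}^{(n+1)}\simeq A[[\xi]]\to\widehat{A\langle\cdot\rangle}$, and checks that $F$ (i.e.\ $F\otimes F$ on $P$) makes each intermediate square commute by naturality, the only non-formal square being the last one, which is exactly the compatibility of $[F]$ with the blowing up provided by theorem \ref{propdivf}. If you want to keep your route, you should replace the mod-$(q-1)$ remark by the mod-$(p,q-1)$ argument sketched above; otherwise the definitional/functorial decomposition is both shorter and free of the congruence issue.
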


\begin{proof}
Using definition \ref{deftayl}, this follows from the commutativity of the various squares in the diagram (we only draw the completed case)
\[
\xymatrix{
A' \ar[r]^-{p_{2}} \ar[d]^F & P_{A'/R} \ar[r] \ar[d]^F & \varprojlim \widehat P_{A'/R}/\widehat I_{A'/R}^{(n+1)_{q^p}} \ar[r]^-\simeq \ar[d]^F & A'[[\xi]]_{q^p} \ar[r] \ar[d]^F & \widehat {A'\langle\omega\rangle}_{q(-1)} \ar[d]^{\widehat{[F]}}\\
A \ar[r]^-{p_{2}} & P_{A/R} \ar[r] & \varprojlim \widehat P_{A/R}/\widehat I_{A/R}^{(n+1)_{q}} \ar[r]^-\simeq & A[[\xi]]_{q} \ar[r] & \widehat {A\langle \xi \rangle}_{q},
}
\]
the last one coming from theorem \ref{propdivf}.
\end{proof}

The following definitions therefore make sense:

\begin{dfn}
\begin{enumerate}
\item
The \emph{level raising functor}
\[
F^* : \mathrm{Strat}_{q}^{(-1)}(A'/R) \to \mathrm{Strat}_{q}^{(0)}(A/R)
\]
is the functor obtained by pulling back the stratification along the divided frobenius of finite orders.
\item
The \emph{level raising functor}
\[
F^* : \widehat{\mathrm{Strat}}_{q}^{(-1)}(A'/R) \to \widehat{\mathrm{Strat}}_{q}^{(0)}(A/R)
\]
is the functor obtained by pulling back the hyper-stratification along the completed divided frobenius.
\end{enumerate}
\end{dfn}

\begin{rmks}
\begin{enumerate}
\item
Since $[F](\omega) \equiv x^{p-1}\xi \mod \xi^{[2]_{q}}$, one easily sees that the diagram
\[
\xymatrix{
\mathrm{Strat}_{q}^{(-1)}(A'/R) \ar[r]^{F^*} \ar[d]^{\simeq} & \mathrm{Strat}_{q}^{(0)}(A/R) \ar[d]^{\simeq}
\\ \mathrm{MIC}_{q}^{(-1)}(A'/R)\ar[r]^{F^*} & \mathrm{MIC}_{q}^{(0)}(A/R)
}
\]
is commutative.
\item
The diagram
\[
\xymatrix{
\widehat{\mathrm{Strat}}_{q}^{(-1)}(A'/R)\ar[r]^{F^*} \ar[d] & \widehat{\mathrm{Strat}}_{q}^{(0)}(A/R) \ar[d]
\\ \mathrm{Strat}_{q}^{(-1)}(A'/R)\ar[r]^{F^*} & \mathrm{Strat}_{q}^{(0)}(A/R)
}
\]
is obviously also commutative.
\end{enumerate}
\end{rmks}

\begin{prop}
The divided frobenius $[F] : A'\langle\omega \rangle_{q(-1)} \to A\langle\xi \rangle_{q} $ is free of degree $p^2$.
\end{prop}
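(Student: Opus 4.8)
The statement means that $[F]$ exhibits $A\langle\xi\rangle_{q}$ as a free module of rank $p^{2}$ over $A'\langle\omega\rangle_{q(-1)}$. I would prove this by producing an explicit basis, namely the $p^{2}$ elements $x^{j}\xi^{[i]_{q}}$ with $0\le i,j\le p-1$. Since the module structure is $m\cdot\eta=[F](m)\,\eta$, since $A'\langle\omega\rangle_{q(-1)}=\bigoplus_{n\ge0}A'\,\omega^{\{n\}_{q}}$ as an $A'$-module, and since $[F]$ is semilinear over $F\colon A'\to A$ (it is a ring morphism extending $F$), every $A'\langle\omega\rangle_{q(-1)}$-linear combination of these $p^{2}$ elements is an $F(A')$-linear combination of the family
\[
\mathcal E:=\{\,[F](\omega^{\{n\}_{q}})\,x^{j}\xi^{[i]_{q}}\ :\ n\ge0,\ 0\le i,j\le p-1\,\}.
\]
So it is enough to show that $\mathcal E$ is an $A'$-basis of $A\langle\xi\rangle_{q}$, viewed as an $A'$-module through $F$.

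The plan is to compare $\mathcal E$ with the obvious basis coming from Proposition \ref{frolif}. Since $F\colon A'\to A$ is free of rank $p$ on $1,x,\dots,x^{p-1}$, we have $A=\bigoplus_{j=0}^{p-1}F(A')x^{j}$, and together with the $A$-basis $\{\xi^{[N]_{q}}\}_{N\ge0}$ of $A\langle\xi\rangle_{q}$ this yields the $A'$-basis $\mathcal B_{0}:=\{\,x^{j}\xi^{[N]_{q}}\ :\ N\ge0,\ 0\le j\le p-1\,\}$. Now filter $A\langle\xi\rangle_{q}$ by the submodules $\bigoplus_{k\ge N}A\,\xi^{[k]_{q}}$. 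Using the explicit description of $[F]$ from Theorem \ref{propdivf}, $[F](\omega^{\{n\}_{q}})=\sum_{i=n}^{pn}b_{n,i}x^{pn-i}\xi^{[i]_{q}}$ with top term $b_{n,pn}\xi^{[pn]_{q}}$, together with the multiplication formula for twisted divided powers (the case $m=0$ of the remark after Definition \ref{pardef}), whose top term is $\xi^{[k]_{q}}\xi^{[i]_{q}}={k+i\choose k}_{q}\xi^{[k+i]_{q}}+(\text{lower order})$, one computes that
\[
[F](\omega^{\{n\}_{q}})\,x^{j}\xi^{[i]_{q}}\equiv u_{n,i}\,x^{j}\xi^{[pn+i]_{q}}\pmod{\textstyle\bigoplus_{N<pn+i}A\,\xi^{[N]_{q}}},\qquad u_{n,i}:=b_{n,pn}{pn+i\choose pn}_{q}.
\]
The point is that $u_{n,i}$ is a unit in $A$, hence in $A'$: the ring $A$ is $(p,q-1)$-adically complete, and modulo $(p,q-1)$ one has $b_{n,pn}=\prod_{1\le k\le pn,\ p\nmid k}(k)_{q}\equiv\prod_{1\le k\le pn,\ p\nmid k}k$ and ${pn+i\choose pn}_{q}\equiv{pn+i\choose i}\equiv1$ (Lucas), both prime to $p$.

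It remains to run a triangularity argument. The map $(n,i,j)\mapsto(pn+i,j)$ is a bijection from the index set of $\mathcal E$ onto that of $\mathcal B_{0}$ (Euclidean division by $p$), and by the congruence above, writing the elements of $\mathcal E$ in the basis $\mathcal B_{0}$ and well-ordering both index sets by their first coordinate gives a column-finite matrix, triangular with respect to this order, with the units $u_{n,i}$ on the diagonal. One then solves for the coordinates of an arbitrary element of $A\langle\xi\rangle_{q}$ with respect to $\mathcal E$ from the top down: the solution exists, is unique, and has finite support, so $\mathcal E$ is an $A'$-basis, whence $\{x^{j}\xi^{[i]_{q}}\}_{0\le i,j\le p-1}$ is an $A'\langle\omega\rangle_{q(-1)}$-basis of $A\langle\xi\rangle_{q}$, of cardinality $p^{2}$. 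The only genuine computation is the determination of the top term of $[F](\omega^{\{n\}_{q}})\,x^{j}\xi^{[i]_{q}}$ and the verification that $u_{n,i}$ is a unit; I expect this, rather than the formal part, to be where the work sits. (Alternatively, one could factor $[F]$ as the base change of $A'\langle\omega\rangle_{q(-1)}$ along $F\colon A'\to A$ — free of rank $p$ by Proposition \ref{frolif} — followed by the divided-power extension of the relative Frobenius $\omega\mapsto(x+\xi)^{p}-x^{p}$ over $A$, which one would again show free of rank $p$ by the same computation, giving $p\cdot p=p^{2}$.)
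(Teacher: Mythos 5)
Your argument is correct and is essentially the paper's: the proof there also combines the freeness of $F : A' \to A$ on $1, x, \ldots, x^{p-1}$ with the fact that the leading coefficient $b_{n,pn} = \prod_{k=1}^n\prod_{i=1}^{p-1}(kp-i)_{q}$ of $[F](\omega^{\{n\}_{q}})$ is a unit (quoted from proposition 7.9 of \cite{GrosLeStumQuiros19}), which is exactly the triangularity input you verify by hand via $u_{n,i}=b_{n,pn}{pn+i \choose pn}_{q}$. The only organizational difference is that the paper first linearizes along $F$ and shows the resulting map is free on $1,\xi,\ldots,\xi^{p-1}$ --- precisely the two-step factorization you mention in your closing parenthesis --- so there is no substantive divergence (just note that invertibility of $u_{n,i}$ should be argued in $R$, or in $A'$, rather than deduced from invertibility in $A$).
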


\begin{proof}
Since $F : A' \to A$ is free of degree $p$, it is sufficient to prove that the $A$-linearization
\[
A {}_{{}_{F}\nwarrow}\!\!\otimes_{A'} A' \langle\omega \rangle_{q(-1)} \to A\langle\xi \rangle_{q}
\]
of $[F]$ is free on $1, \xi, \ldots, \xi^{p-1}$.
Thus, we have to show that $\{[F](\omega^{\{n\}_{q}})\xi^i\}_{0\leq i < p, n \in \mathbb Z_{\geq 0}}$ is a basis of $A\langle\xi \rangle_{q}$ as an $A$-module.
This follows from proposition 7.9 in \cite{GrosLeStumQuiros19} which gives the precise value of the leading coefficient
\[
b_{n,pn} = \prod_{k=1}^n\prod_{i=1}^{p-1} (kp-i)_{q} \in R^\times
\]
in the explicit formula recalled in the proof of theorem \ref{propdivf}.
\end{proof}

\begin{rmks}
\begin{enumerate}
\item
As a consequence, we see that the completed divided frobenius
\[
\widehat{[F]} : \widehat{A'\langle\omega \rangle}_{q(-1)} \to \widehat{A\langle\xi \rangle}_{q}
\]
is also finite free and in particular faithfully flat.
And the same is true for the morphism
\[
\widehat{[F]} \widehat\otimes \widehat{[F]} : \widehat{A'\langle\omega \rangle}_{q(-1)} \widehat\otimes_{A'} \widehat{A'\langle\omega \rangle}_{q(-1)} \to \widehat{A\langle\xi \rangle}_{q} \widehat\otimes_{A} \widehat{A\langle\xi \rangle}_{q}.
\]
\item The analogs at finite order are not true anymore.
\end{enumerate}
\end{rmks}

\begin{lem}
The composite map
\begin{equation} \label{compm}
A[\xi] \to A \otimes_{R} A \twoheadrightarrow A \otimes_{A'} A, \quad \xi \mapsto 1 \otimes x - x \otimes 1
\end{equation}
extends uniquely to a morphism of $\delta$-rings
\begin{equation} \label{desc}
u: A\langle\xi \rangle_{q} \to A \otimes_{A'} A
\end{equation}
and the diagram
\begin{equation} \label{magsq}
\xymatrix{
A'\langle \omega \rangle_{q(-1)} \ar[d]^{[F]} \ar[r] & A' \ar[d] \\
A\langle\xi \rangle_{q} \ar[r]^-u & A \otimes_{A'} A 
}
\end{equation}
is commutative (the upper map sends $\omega^{\{n\}_{q}}$ to $0$).
\end{lem}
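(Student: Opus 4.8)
The plan is to build the morphism $u$ in two stages: first extend \eqref{compm} from the polynomial ring $A[\xi]$ to the ring of twisted divided polynomials $A\langle\xi\rangle_q$, and then verify the $\delta$-compatibility and the commutativity of \eqref{magsq}. For the first stage, recall that $A\langle\xi\rangle_q$ is obtained from $A[\xi]$ by adjoining the twisted divided powers $\xi^{[n]_q}$, and that the target $A\otimes_{A'}A$ is a \emph{finite} $A$-algebra (indeed free of rank $p$, by proposition \ref{frolif}), hence $(p,q-1)$-adically complete. One should first check that the image of $\xi=1\otimes x-x\otimes 1$ in $A\otimes_{A'}A$ lies in an ideal where the relevant twisted divided powers make sense — concretely, the key point is that $\xi^{(p)_{q^p}}=\xi\sigma^p(\xi)\cdots\sigma^{p(p-1)}(\xi)$ becomes divisible by $(p)_q$ in $A\otimes_{A'}A$, because the relative Frobenius $F$ identifies $A$ with $A'[T]/(T^p-1\otimes x)$ and the norm of $\xi$ relative to $A'$ produces the factor $(p)_q$. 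Once that divisibility is established, the universal property of $A'\langle\omega\rangle_{q(-1)}$ (the twisted divided powers of level $-1$ being exactly those of $A\langle\xi\rangle_{q^p}$ after blowing up $\xi\mapsto(p)_q\omega$, by definition \ref{pardef} and proposition \ref{propbl}) forces a unique $A$-linear extension, and the naturality / reduction-to-$\mathbb Q(q)$ trick used throughout section \ref{partial} (e.g. in lemma \ref{bloex}) shows it is a ring homomorphism.

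For the $\delta$-structure, I would invoke the same principle as in proposition \ref{del1} and its corollary: since $A\langle\xi\rangle_q$ carries the (unique, natural) $\delta$-structure extending the symmetric one on $A[\xi]$, and $A\otimes_{A'}A$ carries the $\delta$-structure coming from that of $A$ via the two projections, it suffices to check $\delta$-compatibility on the generator $\xi$, i.e. that $u(\delta(\xi))=\delta(u(\xi))$ in $A\otimes_{A'}A$. Here $u(\xi)=1\otimes x-x\otimes 1$, and both $1\otimes x$ and $x\otimes 1$ have rank one (as $x$ does), so $\delta(1\otimes x-x\otimes 1)$ is computed from $\delta(-y)=(-1)^p\delta(y)-\tfrac{y^p+(-y)^p}{p}$ together with the addition formula for $\delta$; on the other side $u(\delta(\xi))=u\bigl(\sum_{i=1}^{p-1}\tfrac1p\binom{p}{i}x^{p-i}\xi^i\bigr)=\sum_{i=1}^{p-1}\tfrac1p\binom{p}{i}x^{p-i}(1\otimes x-x\otimes 1)^i$, and these two match precisely because the symmetric $\delta$-structure was designed so that $x+\xi$ has rank one — equivalently, so that $u(\xi)+x\otimes 1 = 1\otimes x$ has rank one, which is exactly the defining condition. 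So $\delta$-compatibility is essentially built into the definition of the symmetric structure.

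For the commutativity of \eqref{magsq}, chase $\omega^{\{n\}_q}$ both ways. Along the top and right, $\omega^{\{n\}_q}\mapsto 0$ for $n\ge 1$ and $1\mapsto 1$, landing in $A'\subset$ (then mapped to) $A\otimes_{A'}A$ as the image of $A'$ under $F$ composed with the diagonal-type structure map $A'\to A\otimes_{A'}A$. Along the left and bottom, $[F]$ sends $\omega^{\{n\}_q}$ to $\sum_{i=n}^{pn}b_{n,i}x^{pn-i}\xi^{[i]_q}$ (theorem \ref{propdivf}), which $u$ then sends to $\sum_{i=n}^{pn}b_{n,i}x^{pn-i}\,u(\xi^{[i]_q})$; the claim is that for $n\ge 1$ this sum vanishes in $A\otimes_{A'}A$. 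This is the crux and I expect it to be the main obstacle: it amounts to the statement that $u(\xi^{(i)_{q^p}})$ is divisible by $(p)_q^{\lceil i/p\rceil}$-type powers in $A\otimes_{A'}A$ with enough room to kill the denominators $(n)_{q^p}!(p)_q^n$ appearing in \eqref{bn}, and I would prove it by the cleanest available route: base-change the whole diagram along $A'\to\mathbb Q(q)\otimes(\text{something generic})$ where all $q$-analogs are invertible, so that $A\langle\xi\rangle_q$, $A'\langle\omega\rangle_{q(-1)}$ and $A\langle\xi\rangle_{q^p}$ all coincide with honest polynomial rings and $[F]$ becomes the actual relative Frobenius $F(\xi)=(x+\xi)^p-x^p$. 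There, $u([F](\omega))=u\bigl(\tfrac{F(\xi)}{(p)_q}\bigr)=\tfrac{(x\otimes 1 + (1\otimes x - x\otimes 1))^p-(x\otimes 1)^p}{(p)_q}=\tfrac{(1\otimes x)^p-(x\otimes 1)^p}{(p)_q}$, and since $1\otimes x^p=x^p\otimes 1$ in $A\otimes_{A'}A$ (the relative Frobenius $F$ being exactly the map along which we tensor), this vanishes; the factor $(p)_q$ is precisely what allows division, via the factorisation $T^p-S^p=\prod_{j}(T-q^jS)$ of $q$-numbers, so no denominators remain and the identity descends from the generic case back to $A$ by the torsion-freeness and naturality already used in the section. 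Finally, uniqueness of $u$ is immediate since $A[\xi]$ is dense in $A\langle\xi\rangle_q$ for the adic topology and the target is complete.
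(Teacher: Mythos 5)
Your treatment of the commutative square is essentially the paper's argument: the whole point is that $u(F(\xi))=1\otimes x^p-x^p\otimes 1=0$ in $A\otimes_{A'}A$, and the denominators $(n)_{q^p}!\,(p)_q^n$ of \eqref{bn} are disposed of by reducing, via naturality, to the universal case $R=\mathbb Z[q]_{(p,q-1)}$, $A=R[x]$ and using torsion-freeness there (in the paper's phrasing, $(n)_{q^p}!\,(p)_q^n[F](\omega^{\{n\}_q})=F(\xi^{(n)_{q^p}})$ is a multiple of $\phi(\xi)$, hence killed by $u$). Only note that for a general $A$ the map inverting all $q$-analogs need not be injective, so the reduction to the model case must come \emph{before} the division, not after.

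The genuine gap is in the construction of $u$ itself. The mechanism that produces, and uniquely determines, the extension of \eqref{compm} to $A\langle\xi\rangle_q$ is the universal property of $A\langle\xi\rangle_q$ as the $q$-PD (level $0$) envelope of $(A[\xi],(\xi))$ for the symmetric $\delta$-structure (theorem 3.6 of \cite{GrosLeStumQuiros20}), applied to the one-line verification that $\phi(\xi)\mapsto 1\otimes x^p-x^p\otimes 1=0$ in $A\otimes_{A'}A$, so the image ideal trivially satisfies the $q$-PD condition $\phi(J)\subset (p)_q B$. You never invoke this; instead you appeal to the universal property of $A'\langle\omega\rangle_{q(-1)}$, which is the wrong ring: it is the \emph{source} of $[F]$, not of $u$, and its universal property (theorem \ref{pqenv}) would require the image of the ideal to lie in $(p)_q(A\otimes_{A'}A)$ with a $(p)_q$-torsion-free target, conditions neither verified nor relevant here; your substitute input (divisibility of $\xi^{(p)_{q^p}}$ by $(p)_q$ via a norm argument) is unproved and is not the hypothesis of any universal property used in the paper. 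Your uniqueness argument also fails: $A[\xi]$ is \emph{not} dense in $A\langle\xi\rangle_q$ for the $(p,q-1)$-adic topology, since its image is $\bigoplus_n (n)_q!\,A\,\xi^{[n]_q}$ and $(p)_q!$ is a non-unit modulo $(p,q-1)$, so already $\xi^{[p]_q}$ does not lie in the closure; uniqueness (as a $\delta$-morphism) again comes from the envelope's universal property. Your observation that $u(x+\xi)=1\otimes x$ has rank one is correct and does show that $A[\xi]\to A\otimes_{A'}A$ is a $\delta$-map for the symmetric structure, but that only handles the polynomial ring, not the passage to twisted divided powers, which is exactly where the cited universal property is needed.
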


\begin{proof}
Since
\[
\phi(\xi) \in A[\xi] \mapsto 1 \otimes x^p - x^p \otimes 1 \in A \otimes_{R} A \mapsto 0 \in A \otimes_{A'} A,
\]
the first assertion follows from the universal property of $A\langle\xi \rangle_{q}$ (theorem 3.6 in \cite{GrosLeStumQuiros20}).
For the second assertion, we have to show that the map $u$ sends $[F](\omega^{\{n\}_{q}})$ to $0$ for $n > 0$.
By functoriality, we may clearly assume that $R = \mathbb Z[q]_{p,q-1}$ and $A=R[x]$.
It is then sufficient to show that $(n)_{q^p}!(p)_{q}^n[F](\omega^{\{n\}_{q}})$ is sent to $0$ for $n > 0$ but this is exactly $\phi(\xi^{(n)})$ which is a multiple of $\phi(\xi)$ which itself, as we saw above, is sent to $0$ in $A \otimes_{A'} A$.
\end{proof}

\begin{rmk}
The existence of the map $u$ relies on the universal property of twisted divided powers and is not trivial at all:
when $p=2$, it sends $\xi^{[2]_{q}}$ to $x^2 (1 \otimes 1) - x \otimes x$ and for odd $p$, it sends $\xi^{[p]_{q}}$ to
\[
\frac {(1-q)(\frac {p-3}2)_{q^p}}{(p-1)_{q}!} x^p (1 \otimes 1) + \sum_{i=1}^{p-1} \frac {(-1)^{i} }{(i)_{q}!(p-i)_{q}!} q^{\frac {i(i-1)}2}x^{i} \otimes x^{p-i}.
\]
\end{rmk}

We can now prove Berthelot's frobenius descent (see theorem 2.3.6 in \cite{Berthelot00}) in our situation (from level $0$ to level $-1$):

\begin{thm} \label{mainthm}
Raising level provides an equivalence of categories
\[
\widehat{\mathrm{Strat}}_{q}^{(-1)}(A'/R)\simeq \widehat{\mathrm{Strat}}_{q}^{(0)}(A/R).
\]
\end{thm}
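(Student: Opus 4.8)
The plan is to reduce the equivalence of categories of hyper-stratified modules to a faithfully flat descent statement along the completed divided frobenius $\widehat{[F]}\colon \widehat{A'\langle\omega\rangle}_{q(-1)} \to \widehat{A\langle\xi\rangle}_{q}$. A twisted hyper-stratification of level $-1$ on an $A'$-module $M'$ is exactly a descent datum for $M'$ relative to one of the two $A'$-algebra structures on $\widehat{A'\langle\omega\rangle}_{q(-1)}$ (left versus right, i.e. the Taylor map), and similarly at level $0$ over $A$. The level raising functor $F^*$ is, by the previous lemmas, the base change along $\widehat{[F]}$ compatibly with both ``left'' and ``right'' structures, with the cocycle condition transported via $\widehat{[F]}\widehat\otimes\widehat{[F]}$. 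So the content is: the square \eqref{magsq} (completed) exhibits $\widehat{A\langle\xi\rangle}_{q}$ as the ``base change'' of $\widehat{A'\langle\omega\rangle}_{q(-1)}$ along $A' \to A$ in a way compatible with both projections, and this base change is finite free, hence faithfully flat, so descent applies.

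**Key steps.** First I would recall from the lemma that the completed map $u\colon \widehat{A\langle\xi\rangle}_{q} \to A\,\widehat\otimes_{A'} A$ together with \eqref{magsq} identifies $A\,\widehat\otimes_{A'}A$ with the ``difference'' between the two descent problems; more precisely, one wants the cartesian-type diagram expressing that a hyper-stratification at level $0$ is the same as a hyper-stratification at level $-1$ together with a usual Frobenius descent datum for the pair $A' \to A$. Second, I would invoke the proposition just proved that $\widehat{[F]}$ is finite free (of degree $p^2$), hence faithfully flat, and likewise $\widehat{[F]}\widehat\otimes\widehat{[F]}$; this is what makes Beauville--Laszlo / faithfully flat descent applicable. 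Third, I would construct the quasi-inverse: given a hyper-stratified module $(M,\epsilon)$ of level $0$ over $A$, the isomorphism $\epsilon$ restricted along $u$ (using \eqref{magsq}) gives precisely a descent datum for $M$ along $F\colon A' \to A$; by faithfully flat descent this produces an $A'$-module $M'$ with $A\,{}_{{}_{F}\nwarrow}\!\!\otimes_{A'}M' \simeq M$, and the residual part of $\epsilon$ descends to a hyper-stratification of level $-1$ on $M'$ because $\widehat{[F]}$ and $\widehat{[F]}\widehat\otimes\widehat{[F]}$ are faithfully flat. Checking the normalization and cocycle conditions for the descended $\epsilon'$ is then a diagram chase using injectivity of $\widehat{[F]}$ (resp. of the double tensor). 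Finally, I would verify that the two constructions are mutually inverse, which again reduces to faithful flatness of $\widehat{[F]}$.

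**Main obstacle.** The delicate point is not the formal descent machinery but showing that the data genuinely match up: that pulling back a level $-1$ hyper-stratification along $\widehat{[F]}$ lands in the level $0$ hyper-stratifications (normalization via $\Delta_n$, cocycle via $\Delta_{n,n}$), and conversely that an arbitrary level $0$ hyper-stratification, when split according to the finite free decomposition $\widehat{[F]}$, yields both an honest Frobenius descent datum over $A' \to A$ \emph{and} a residual $\widehat{A'\langle\omega\rangle}_{q(-1)}$-linear isomorphism after descent. Concretely, the hard part will be verifying commutativity of the relevant cocycle squares, which rests on the compatibility of $[F]$ with the diagonal maps $\Delta_{n_1,n_2}$ (i.e. that $\widehat{[F]}$ is a morphism of the relevant coalgebra-like structures, together with the square \eqref{magsq} relating $u$ to $F$). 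Once that compatibility is in hand, the equivalence follows formally from faithfully flat descent applied to $\widehat{[F]}$, exactly as in Berthelot's argument for Frobenius descent of arithmetic $\mathcal D$-modules (theorem 2.3.6 in \cite{Berthelot00}).
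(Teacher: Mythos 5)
Your proposal is correct and follows essentially the same route as the paper: use the map $u$ and the commutative square \eqref{magsq} to turn a level~$0$ hyper-stratification into a descent datum along the finite free relative frobenius $F\colon A'\to A$, apply effectiveness of faithfully flat descent to recover $M'$, and then use the faithfulness (via finite freeness) of $\widehat{[F]}$ and of $\widehat{[F]}\widehat\otimes\widehat{[F]}$, together with the compatibility of $[F]$ with the Taylor maps, to descend the stratification isomorphism and verify the normalization and cocycle conditions. The only cosmetic difference is your invocation of Beauville--Laszlo, which is not needed: ordinary faithfully flat descent along $F$ suffices, exactly as in the paper.
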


\begin{proof}
Recall that a descent datum along $F$ on an $A$-module $M$ is an isomorphism $\epsilon : A \otimes_{A'} M \simeq M \otimes_{A'} A$ satisfying the cocycle and normalization conditions.
If we denote by $(A/A')-\mathrm{Mod}$ the category of $A$-modules endowed with a descent data along $F$, then there exists an equivalence of categories
\[
A'-\mathrm{Mod} \simeq (A/A')-\mathrm{Mod}, \quad M' \mapsto (A \otimes_{A'} M', \mathrm{can}),
\]
where $\mathrm{can}$ denotes the canonical descent datum (\cite[\href{https://stacks.math.columbia.edu/tag/023F}{Section 023F}]{stacks-project}).
This happens because $F$ is faithfully flat (descent is effective).
On the other hand, pulling back an hyper-$q$-stratification along the map 
\[
u: A\langle\xi \rangle_{q} \to A \otimes_{A'} A,
\]
 provides a descent datum and we obtain a functor
\[
\widehat{\mathrm{Strat}}_{q}^{(0)}(A/R) \to (A/A')-\mathrm{Mod}.
\]
Diagram \eqref{magsq} provides us with a commutative diagram of functors
\[
\xymatrix{
\widehat{\mathrm{Strat}}_{q}^{(-1)}(A'/R) \ar[d] \ar[r] & A'-\mathrm{Mod} \ar[d]^-{\simeq} \\
\widehat{\mathrm{Strat}}_{q}^{(0)}(A/R) \ar[r] & (A/A')-\mathrm{Mod}.
}
\]
Now, let $M', N' \in \widehat{\mathrm{Strat}}_{q}^{(-1)}(A'/R)$ and
\[
u : M := A \otimes_{A'} M' \to N := A \otimes_{A'} N'
\]
be a morphism in $\widehat{\mathrm{Strat}}_{q}^{(0)}(A/R)$.
The map $u$ induces a morphism in $(A/A')-\mathrm{Mod}$.
Since descent along $F$ is effective, it comes from a unique $A'$-linear map $u': M' \to N'$.
Moreover, since $\widehat{[F]}$ is faithful, the diagram
\[
\xymatrix{
\widehat{A'\langle \omega \rangle}_{q(-1)} \otimes_{A'}' M' \ar[r]^\epsilon \ar[d]^{1 \otimes u'} & M' \otimes_{A'} \widehat{A'\langle \omega \rangle}_{q(-1)} \ar[d]^{u' \otimes 1}
\\ \widehat{A'\langle \omega \rangle}_{q(-1)} \otimes_{A'}' N' \ar[r]^\epsilon & N' \otimes_{A'} \widehat{A'\langle \omega \rangle}_{q(-1)}
}
\]
is commutative.
It follows that $u'$ is actually a morphism in $\widehat{\mathrm{Strat}}_{q}^{(-1)}(A'/R)$ and our functor is therefore fully faithful.
Now, if $M \in \widehat{\mathrm{Strat}}_{q}^{(0)}(A/R)$, it has a descent datum along $F$ and comes therefore from an $A'$-module $M'$.
Since $\widehat{[F]}$ is fully faithful, the hyper-$q$-stratification of $M$ comes from a a unique isomorphism
\[
\widehat{A'\langle \omega \rangle}_{q(-1)} \otimes_{A'}' M' \simeq M' \otimes_{A'} \widehat{A'\langle \omega \rangle}_{q(-1)}.
\]
The cocycle and normalization conditions then follow from the faithfulness of $\widehat{[F]} \widehat\otimes \widehat{[F]}$.
\end{proof}

We may now state the twisted Simpson correspondence in this setting (generalizing corollary 8.9 of \cite{GrosLeStumQuiros19}):

\begin{cor} \label{eqmod}
Raising level induces an equivalence between locally quasi-nilpotent finitely presented $A'$-modules endowed with a twisted connection of level $-1$ and locally quasi-nilpotent finitely presented $A$-modules endowed with a twisted connection of level $0$. \qed
\end{cor}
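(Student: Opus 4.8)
The plan is to deduce the statement from Theorem~\ref{mainthm} by trading hyper-stratifications for topologically quasi-nilpotent connections on finitely presented modules. Everything substantial is already in place; the corollary should come out of assembling the equivalences recorded above.

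First I would invoke the proposition at the end of Section~\ref{neg}: on a finitely presented module, a twisted hyper-stratification of level $-m$ is the same datum as a topologically quasi-nilpotent twisted connection of level $-m$ (and in this affine $(p,q-1)$-adic situation the ``topologically'' and ``locally'' quasi-nilpotent conditions coincide, which is the first small point to pin down). Applying this for $m=0$ over $A$ and for $m=1$ over $A'$, the forgetful functor $\widehat{\mathrm{Strat}}_{q}^{(-m)} \to \mathrm{MIC}_{q}^{(-m)}$, which is fully faithful in general, restricts to an equivalence between the finitely presented part of $\widehat{\mathrm{Strat}}_{q}^{(-m)}$ and the finitely presented, locally quasi-nilpotent part of $\mathrm{MIC}_{q}^{(-m)}$. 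Moreover, by the two commutative squares recorded above relating $\widehat{\mathrm{Strat}}$, $\mathrm{Strat}$ and $\mathrm{MIC}$ (the second one using $[F](\omega)\equiv x^{p-1}\xi$ modulo $\xi^{[2]_{q}}$), the level raising functor $F^{*}$ on $\widehat{\mathrm{Strat}}$ is carried, under these identifications, to the functor $F^{*}$ of \eqref{levlr}, namely $(M',\theta')\mapsto (A\otimes_{A'}M',\theta)$.

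Next I would check that the underlying-module operation $M'\mapsto A\otimes_{A'}M'$ preserves and reflects finite presentation. By Proposition~\ref{frolif} the relative Frobenius $F\colon A'\to A$ is finite free, hence faithfully flat and of finite presentation; so $A\otimes_{A'}M'$ is finitely presented over $A$ whenever $M'$ is finitely presented over $A'$, and conversely, by faithfully flat descent of finite presentation, an $A$-module of the form $A\otimes_{A'}M'$ that is finitely presented forces $M'$ to be finitely presented over $A'$. Therefore the equivalence $\widehat{\mathrm{Strat}}_{q}^{(-1)}(A'/R)\simeq\widehat{\mathrm{Strat}}_{q}^{(0)}(A/R)$ of Theorem~\ref{mainthm} restricts to an equivalence between the finitely presented objects on each side, and combining this with the previous step gives exactly the claimed equivalence between finitely presented locally quasi-nilpotent objects of $\mathrm{MIC}_{q}^{(-1)}(A'/R)$ and of $\mathrm{MIC}_{q}^{(0)}(A/R)$.

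I do not expect a genuine obstacle, since the hard analytic input is all concentrated in Theorem~\ref{mainthm}. The one conceptual point worth stressing is why the argument must pass through hyper-stratifications at all: the quasi-nilpotence condition is \emph{not} visible as a condition on a morphism of rings of twisted differential operators, because the level raising functor does not arise from such a morphism; routing through $\widehat{\mathrm{Strat}}$ (where the functor is a genuine pullback along the divided Frobenius) is what lets the quasi-nilpotence be controlled automatically. Accordingly, the only items requiring real care in writing this up are the identification of ``locally'' with ``topologically'' quasi-nilpotent in the finitely presented case, and the two-way preservation of finite presentation under $A\otimes_{A'}(-)$; both are routine given that $F$ is finite free.
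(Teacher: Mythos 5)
Your argument is correct and is exactly the route the paper intends: corollary \ref{eqmod} is stated with no separate proof because it is meant to follow by combining theorem \ref{mainthm} with the proposition identifying hyper-stratifications of level $-m$ on finitely presented modules with topologically quasi-nilpotent twisted connections, the commutative squares relating $\widehat{\mathrm{Strat}}$, $\mathrm{Strat}$ and $\mathrm{MIC}$, and the fact that $F:A'\to A$ is finite free so that finite presentation is preserved and reflected under $A\otimes_{A'}(-)$. Your two flagged points of care (identifying ``locally'' with ``topologically'' quasi-nilpotent here, and faithfully flat descent of finite presentation) are precisely the routine checks the paper leaves implicit.
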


\section{Prisms and twisted divided powers of negative level}

The content of this section is quite similar to sections 3, 4 and 5 of \cite{GrosLeStumQuiros20}, but the proofs are usually easier.

We let $R$ be a $\delta$-ring with fixed rank one element $q$ and we assume that $R$ is actually a $\mathbb Z[q]_{(p,q-1)}$-algebra.
It does not really matter at this point, but we will also assume that $R$ is $(p)_{q}$-torsion free.
All $R$-algebras (or modules) are implicitly endowed with their $(p,q-1)$-adic topology and completion will always be meant with respect to this topology.
Finally, we will use here our notations from \cite{GrosLeStumQuiros20} and denote by $A^{\delta}$ the $\delta$-envelope of an $R$-algebra $A$ and by $I_{\delta}$ the $\delta$-envelope of an ideal $I$ (so that, for example, when $A = R[x]$ and $I=(x)$, we have $A^\delta = R[x_{0}, x_{1}, \ldots ]$ and $(x)_{\delta} = (x_{0}, x_{1}, \ldots)$).

We recall that, by definition, a \emph{$\delta$-pair} over $R$ is a couple $(B, J)$ made of a $\delta$-$R$-algebra $B$ and an ideal $J \subset B$.
With the obvious morphisms, $\delta$-pairs form a category.
When $J = (d)$ is a principal ideal, we will simply write $(B, d)$.
We introduce now the algebraic version of a prismatic envelope:

\begin{dfn}
The \emph{$(p)_{q}$-envelope} of a $\delta$-pair $(B, J)$ (if it exists) is a $\delta$-pair which is universal for morphisms from $(B,J)$ to $(p)_{q}$-torsion free $\delta$-pairs of the form $(B' ,(p)_{q})$.
\end{dfn}

This means that there exists a $(p)_{q}$-torsion free $\delta$-ring that we will usually denote by $B[J/(p)_{q}]^\delta$ and a morphism $B \to B[J/(p)_{q}]^\delta$ of $\delta$-rings such that $JB[J/(p)_{q}]^\delta \subset (p)_{q}B[J/(p)_{q}]^\delta$ satisfying the following universal property: any morphism of $\delta$-rings $B \to B'$ where $B'$ is $(p)_{q}$-torsion free and $JB' \subset (p)_{q}B'$ factors uniquely through $B[J/(p)_{q}]^\delta$.

\begin{rmks}
\begin{enumerate}
\item
Let $(B, J)$ be a $\delta$-pair and $\mathfrak b \subset B$ a $\delta$-ideal (meaning that it is stable under $\delta$).
Assume that $(B, J)$ has a $(p)_{q}$-envelope $B[J/(p)_{q}]^\delta$.
Then, if the quotient ring $B[J/(p)_{q}]^\delta/\mathfrak bB[J/(p)_{q}]^\delta$ is $(p)_{q}$-torsion free, this is the $(p)_{q}$-envelope of $(B/\mathfrak b, (J+\mathfrak b)/\mathfrak b)$.
\item
Let $\{(B_{e}, J_{e})\}_{e\in E}$ be a commutative diagram of $\delta$-pairs all having a $(p)_{q}$-envelope $B_{e}[J_{e}/(p)_{q}]^\delta$, $B := \varinjlim B_{e}$ and $J = \sum J_{e}B$.
If $\varinjlim B_{e}[J_{e}/(p)_{q}]^\delta$ is $(p)_{q}$-torsion free, then this is the $(q)_{p}$-envelope of $(B,J)$.
\end{enumerate}
\end{rmks}

\begin{xmps}
\begin{enumerate}
\item If $B$ is $(p)_{q}$-torsion free and $J \subset (p)_{q}B$, then $B[J/(p)_{q}]^\delta=B$.
\item If $J = B$, then $B[J/(p)_{q}]^\delta=B\left[\{1/(n)_{q}\}_{n \in \mathbb Z_{\geq 0}}\right]$.
\item Let $B := R[x]^\delta$ and $J := (x)_{\delta}$.
Then the $(p)_{q}$-envelope of $(B, J)$ is $R[w]^\delta$ with respect to the blowing up $R[x] \to R[w], x \mapsto (p)_{q}w$.
One can derive many other examples from this one.
\item Assume $B=R[x]$, $\delta(x) = 0$ and $p=2$.
Then,
\[
\delta\left(\frac x {(2)_{q}}\right) = \frac q{(2)_{q^2}} \left(\frac x {(2)_{q}}\right)^2 \notin R\left[\frac x {(2)_{q}}\right]
\]
(the $(p)_{q}$-envelope is not noetherian in general).
\end{enumerate}
\end{xmps}

\begin{thm} \label{pqenv}
Let $A$ be $p$-torsion free $\delta$-$R$-algebra with fixed rank one element $x$.
If we endow $A[\xi]$ with the \emph{symmetric} $\delta$-structure (with respect to $x$), then the $(p)_{q}$-envelope of $(A[\xi], \xi)$ is $A\langle \omega \rangle_{q(-1)}$.
\end{thm}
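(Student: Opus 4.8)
The plan is to verify the universal property of the $(p)_q$-envelope directly for $A\langle\omega\rangle_{q(-1)}$, equipped with the ideal $(\omega)$ (or rather $I_{A/R}^{\{1\}_q}$, the ideal generated by all $\omega^{\{n\}_q}$ with $n\geq 1$). There are three things to check: (i) $A\langle\omega\rangle_{q(-1)}$ carries a $\delta$-structure extending that of $A[\xi]$ under the blowing-up $\xi\mapsto (p)_q\omega$; (ii) it is $(p)_q$-torsion free; (iii) the map $A[\xi]\to A\langle\omega\rangle_{q(-1)}$ sends $\xi$ into $(p)_q A\langle\omega\rangle_{q(-1)}$, and it is initial among all such maps to $(p)_q$-torsion free $\delta$-pairs of the form $(B',(p)_q)$.

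**Assembling the pieces.** Point (i) is exactly Proposition \ref{del1}: the symmetric $\delta$-structure on $A[\xi]$ extends, naturally and uniquely, through blowing up to $A\langle\omega\rangle_{q(-1)}$. Point (ii) follows because $A\langle\omega\rangle_{q(-1)}$ is by construction a \emph{free} $A$-module on the basis $\{\omega^{\{n\}_q}\}_{n\geq 0}$, and $A$ is $p$-torsion free; one checks $(p)_q=(p)_q\cdot 1$ is a nonzerodivisor on a free module over the $(p)_q$-torsion free ring $A$ (here one uses that $R$, hence $A$, is $(p)_q$-torsion free — which holds since $A$ is $p$-torsion free and we may reduce to $R=\mathbb Z[q]_{(p,q-1)}$ by naturality). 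Point (iii): the blowing-up map $A[\xi]\to A\langle\omega\rangle_{q(-1)}$ sends $\xi=\xi^{(1)_{q^p}}$ to $(p)_q\omega^{\{1\}_q}$ (using $\xi^{[1]_{q^p}}=\xi^{(1)_{q^p}}$ and Proposition \ref{propbl} with $m=1$), so $(\xi)$ indeed lands in $(p)_q A\langle\omega\rangle_{q(-1)}$.

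**The universal property.** For the universality, let $B\to B'$ be a morphism of $\delta$-rings with $B=A[\xi]$ (symmetric $\delta$-structure), $B'$ a $(p)_q$-torsion free $\delta$-ring, and $\xi B'\subset (p)_q B'$. Write $\xi\mapsto (p)_q w$ for a necessarily unique $w\in B'$ (uniqueness by $(p)_q$-torsion freeness). I would then construct the factorization $A\langle\omega\rangle_{q(-1)}\to B'$, $\omega^{\{n\}_q}\mapsto$ (the appropriate element), by exploiting the universal property of $A\langle\omega\rangle_{q(-1)}$ \emph{as a $\delta$-ring} already recorded in the previous work — namely, that over a $\mathbb Q(q)$-point the map $A[\xi]\to A\langle\omega\rangle_{q(-1)}$ becomes an isomorphism, so that the candidate factorization is forced, and then descending from the generic fiber: the element $\xi^{(n)_{q^p}}/((n)_{q^p}!(p)_q^n)\in B'\otimes\mathbb Q$ actually lies in $B'$ because $w=\xi/(p)_q\in B'$ and the divided-power structure is manufactured precisely so that $\delta$ preserves $B'$ — this is where one invokes that $B'$ is a $\delta$-ring and re-runs the computation of Proposition \ref{del1}. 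Equivalently, and perhaps more cleanly, I would cite the analog of Theorem 3.6 of \cite{GrosLeStumQuiros20} (the universal property of $A\langle\xi\rangle_q$ for $\delta$-rings) transported to level $-1$ via the divided frobenius $[F]$ of Theorem \ref{propdivf}, together with Remarks 1.20–1.21.

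**Main obstacle.** The delicate point is the universal property (iii): one must show that whenever $\xi\mapsto (p)_q w$ into a $(p)_q$-torsion free $\delta$-ring $B'$, \emph{all} the twisted divided powers $\omega^{\{n\}_q}$ have well-defined images in $B'$ — that is, the formal expressions $\xi^{(n)_{q^p}}/((n)_{q^p}!(p)_q^n)$ really do lie in $B'$, not merely in $B'\otimes\mathbb Q$. This is not formal: it is precisely the content that the symmetric $\delta$-structure forces enough integrality, and the cleanest route is to reduce (by naturality) to the universal case $R=\mathbb Z[q]_{(p,q-1)}$, $A=R[x]$, $B'=A[\xi]^\delta[\xi/(p)_q]$, and there quote the explicit formulas for $b_{n,i}$ from \cite{GrosLeStumQuiros19} (as in Theorem \ref{propdivf} and the computation in Proposition \ref{del1}) which exhibit the divided frobenius with $R$-coefficients. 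Once integrality is in hand, uniqueness of the factorization is automatic from $(p)_q$-torsion freeness and the fact that the $\omega^{\{n\}_q}$ generate $A\langle\omega\rangle_{q(-1)}$ as an $A$-algebra.
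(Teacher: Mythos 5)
Your reduction of the problem is reasonable and the easy parts are correctly assembled (the $\delta$-structure on $A\langle\omega\rangle_{q(-1)}$ from proposition \ref{del1}, freeness as an $A$-module, and $\xi\mapsto (p)_q\omega$), but the heart of the theorem --- the universal property --- is not actually proved by what you propose, and the mechanisms you invoke do not deliver it. The test object $B'$ is only assumed to be a $(p)_q$-torsion free $\delta$-ring over $R$: it may have $p$-torsion or $(q-1)$-torsion, and the $q$-factorials $(n)_{q^p}!$ may be zero divisors in it, so the expressions $\xi^{(n)_{q^p}}/\bigl((n)_{q^p}!(p)_q^n\bigr)$ have no meaning ``in $B'\otimes\mathbb Q$'' (the map $B'\to B'\otimes\mathbb Q$ need not be injective), and ``descending from the generic fiber'' does not get off the ground. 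Saying that ``the divided-power structure is manufactured precisely so that $\delta$ preserves $B'$'' is circular: proposition \ref{del1} and the coefficients $b_{n,i}$ of theorem \ref{propdivf} only show that the $A$-span of the $\omega^{\{n\}_q}$ is stable under $\phi$ and $\delta$ (i.e.\ the $\delta$-subring generated by $\omega$ is \emph{contained} in $A\langle\omega\rangle_{q(-1)}$); what the universal property needs is the reverse inclusion, namely that every $\omega^{\{n\}_q}$ is an $A$-linear combination of $\delta$-monomials in $\omega$, so that its image in an arbitrary $B'$ can be written down without any division. Nor can one ``transport'' theorem 3.6 of \cite{GrosLeStumQuiros20} through the divided Frobenius $[F]$: it is finite free of degree $p^2$, not an isomorphism. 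Even your uniqueness argument leans on inverting $(n)_{q^p}!$ in $B'$ (from $(n)_{q^p}!(p)_q^n\,\omega^{\{n\}_q}=$ image of $\xi^{(n)_{q^p}}$), which is not available.

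The missing step is exactly where the paper's proof has its content: one shows that $A\langle\omega\rangle_{q(-1)}$ is generated as a $\delta$-ring over $A$ by $\omega$, in the precise form that the monomials $v_n=\prod_r(\delta^r(\omega))^{a_r}$, where $n=\sum a_rp^r$ is the $p$-adic expansion, form an $A$-basis. This reduces to the congruence $\delta^r(\omega)\equiv c_r\,\omega^{\{p^r\}_q}$ modulo the span of lower divided powers with $c_r\in R^\times$, which is proved by computing the leading coefficients of $\phi\bigl(\omega^{\{p^r\}_q}\bigr)$ and $\bigl(\omega^{\{p^r\}_q}\bigr)^p$ modulo the maximal ideal $(p,q-1)$ (i.e.\ at $q=1$ over $\mathbb Z_p$, a $p$-adic valuation count showing one coefficient is a non-unit and the relevant combination a unit), followed by an induction. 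Once this basis statement is in hand, both existence and uniqueness of the factorization to any $(p)_q$-torsion free $B'$ with $\xi\mapsto(p)_qw$ become formal: $\omega\mapsto w$ is forced by torsion-freeness, hence $\delta^r(\omega)\mapsto\delta^r(w)$ and $v_n\mapsto\prod_r\delta^r(w)^{a_r}$, and since the $v_n$ form a basis this determines a well-defined map with no integrality issue. Your proposal contains nothing playing the role of this generation/unit computation, so as written it has a genuine gap precisely at the theorem's main point.
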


\begin{proof}
This is analog to theorem 3.6 of \cite{GrosLeStumQuiros20} and we will only give a sketch of the proof.
We may assume if we wish that $R = \mathbb Z[q]_{(p,q-1)}$ and $A = R[x]$.
The point is to show that, if we let $v_{n} = \prod(\delta^{r}(\omega))^{a_{r}}$ where $n = \sum a_{r}p^r$ denotes the $p$-adic expansion of a non-negative integer, then $\{v_{n}\}_{n \in \mathbb Z_{\geq 0}}$ is an alternative basis for $A\langle \omega \rangle_{q(-1)}$.
The universal property will then be automatic.
It is clearly sufficient to prove that
\begin{equation} \label{cong}
\delta^{r}(\omega) \equiv c_{r}\omega^{\{p^{r}\}_q} \mod F_{p^r-1} \quad \mathrm{with}\quad c_{r} \in R^\times
\end{equation}
where $F_{n}$ denotes the $A$-submodule generated by $\omega^{\{k\}_q}$ for $k \leq n$.

Let us first show that
\[
\delta\left(\omega^{\{p^r\}_q}\right) \equiv d_{r} \omega^{\{p^{r+1}\}_q} \quad \mathrm{and} \quad \phi\left(\omega^{\{p^r\}_q}\right) \equiv e_{r} \omega^{\{p^{r+1}\}_q}\ \mod F_{p^{r+1}-1}
\]
with $d_{r} \in R^\times$ and $e_{r} \notin R^\times$.
It is not difficult to see that there exists such congruences with $d_{r}, e_{r} \in R$.
Since $R$ is a local ring with maximal ideal $(p,q-1)$, it is then sufficient to prove the conditions $d_{r} \in R^\times$ and $e_{r} \notin R^\times$ modulo $(p,q-1)$.
Since $(q-1)$ is a $\delta$-ideal, we may therefore assume that $q= 1$ (so that $R = \mathbb Z_{p}$).
We can do the following computations
\[
\phi\left(\omega^{\{p^r\}}\right) \equiv \frac {p^{p^r(p-1)}p^{r+1}!}{p^r!} \omega^{\{p^{r+1}\}}, \quad \left(\omega^{\{p^r\}}\right)^p \equiv \frac {p^{r+1}!}{(p^r!)^p} \omega^{\{p^{r+1}\}} \mod F_{p^{r+1}-1}
\]
and check that the valuations of the coefficients are $p^{r+1} > 1$ (which shows that $v_{p}(e_{r}) > 0$) and $1$ respectively.
Finally, since
\[
\delta\left(\omega^{\{p^r\}}\right) =\frac {\phi\left(\omega^{\{p^r\}}\right) - \left(\omega^{\{p^r\}}\right)^p}p,
\]
we see that $v_{p}(d_{r}) = 0$.

We can now prove congruence \eqref{cong} by induction as follows:
\begin{align*}
\delta^{r+1}(\omega) & \equiv \delta\left(c_{r}\omega^{\{p^{r}\}_q}\right) \mod F_{p^{r+1}-1} \\
& \equiv c_{r}^p\delta\left(\omega^{\{p^r\}_q}\right) + \delta(c_{r})\phi\left(\omega^{\{p^r\}_q}\right) \mod F_{p^{r+1}-1}\\
& \equiv (c_{r}^pd_{r} + \delta(c_{r})e_{r})\omega^{\{p^{r+1}\}_q} \mod F_{p^{r+1}-1}\\
& \equiv c_{r+1} \omega^{\{p^{r+1}\}_q} \mod F_{p^{r+1}-1}. \qedhere
\end{align*}
\end{proof}

\begin{dfn} \label{bnd}
An $R$-algebra $B$ is said to be \emph{bounded} (with respect to $(p)_{q}$) if $B$ is $(p)_{q}$-torsion free and $B/(p)_{q}B$ has bounded $p^\infty$-torsion.
We call a $\delta$-pair $(B,J)$ \emph{bounded} when $B$ is bounded.
\end{dfn}

We assume from now on that $R$ itself is \emph{bounded}.
We may then recall the following fundamental definition:

\begin{dfn}[Bhatt-Scholze]
A \emph{bounded prism over $\left(\widehat R, (p)_{q}\right)$} (we will say a \emph{$(p)_{q}$-prism over $R$}) is a $\delta$-pair over $R$ of the form $(B, (p)_{q})$ where $B$ is complete and bounded.
\end{dfn}

\begin{rmks}
\begin{enumerate}
\item
There exists a more general notion of prism or bounded prism that we will not consider here.
\item
In \cite{BhattScholze19}, it is only required that $B$ is derived complete. Actually, when $B$ is bounded, derived completeness is equivalent to completeness, and we always assume here that the prisms are bounded.
\item 
A $(p)_{q}$-prism over $R$ is completely determined by the $\delta$-$R$-algebra $B$ and we may simply say that the ring $B$ is a $(p)_{q}$-prism over $R$.
However, it is fundamental to understand that $B$ is actually a \emph{$\delta$-thickening} with respect to $(p)_{q}$ of $\overline B := B/(p)_{q}$, which is somehow the important object.
\item If $B$ is a $(p)_{q}$-prism, then the ideal $(p)_{q}B$ is closed in $B$, or equivalently $\overline B$ is $p$-adically complete (proposition 4.3 of \cite{GrosLeStumQuiros20}).
\item If $B$ is a bounded $\delta$-$R$-algebra, then $\left(\widehat B, (p)_{q}\right)$ is a $(p)_{q}$-prism over $R$ (proposition 4.3 of \cite{GrosLeStumQuiros20} again).
\end{enumerate}
\end{rmks}

\begin{dfn}
The \emph{(bounded) prismatic envelope} of a $\delta$-pair $(B, J)$ over $R$ is a $(p)_{q}$-prism which is universal for morphisms from $(B,J)$ to $(p)_{q}$-prisms over $R$.
\end{dfn}

\begin{rmks}
\begin{enumerate}
\item There exists a more general notion of prismatic envelope for non bounded prisms but it is then necessary to use derived completions.
\item If $(B,J)$ has a $(p)_{q}$-envelope which is bounded, then the completion of this $(p)_{q}$-envelope is the prismatic envelope of $(B,J)$.
Note hat $B$ is automatically bounded when $B$ is flat (over $R$).
\item Prismatic envelopes are usually quite hard to compute (but see corollary \ref{prismenv} just below).
\end{enumerate}
\end{rmks}

Proposition \ref{pqenv} has the following consequence:

\begin{cor} \label{prismenv}
Let $A$ be bounded $\delta$-$R$-algebra with fixed rank one element $x$.
If we endow $A[\xi]$ with the \emph{symmetric} $\delta$-structure (with respect to $x$), then the prismatic envelope of $(A[\xi], (\xi))$ is $\widehat {A\langle \omega \rangle}_{q(-1)}$. \qed
\end{cor}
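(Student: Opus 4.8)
The plan is to obtain this as a formal consequence of Theorem~\ref{pqenv} combined with the second of the remarks following the definition of the bounded prismatic envelope. Since $A$ is bounded it is in particular $(p)_{q}$-torsion free, so Theorem~\ref{pqenv} applies (its proof reduces to the universal case and uses of $A$ only that it be $(p)_{q}$-torsion free): equipping $A[\xi]$ with the symmetric $\delta$-structure relative to $x$, the $(p)_{q}$-envelope of the $\delta$-pair $(A[\xi],(\xi))$ is $A\langle \omega \rangle_{q(-1)}$. According to that remark, once we know this $(p)_{q}$-envelope is \emph{bounded} in the sense of Definition~\ref{bnd}, its $(p,q-1)$-adic completion is automatically the bounded prismatic envelope of $(A[\xi],(\xi))$; and that completion is, by the notation fixed in this section, $\widehat{A\langle \omega \rangle}_{q(-1)}$. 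So the whole argument reduces to a single verification: that $A\langle \omega \rangle_{q(-1)}$ is bounded.

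For that verification I would use that $A\langle \omega \rangle_{q(-1)}$ is, by construction (Definition~\ref{pardef} together with the proposition preceding Definition~\ref{zerol}), a \emph{free} $A$-module on the twisted divided powers $\{\omega^{\{n\}_{q}}\}_{n\geq 0}$. Freeness over $A$ at once gives that $A\langle \omega \rangle_{q(-1)}$ is $(p)_{q}$-torsion free, since $A$ is. For the remaining condition, reducing modulo $(p)_{q}$ and using that $-\otimes_{A}A/(p)_{q}A$ commutes with direct sums,
\[
A\langle \omega \rangle_{q(-1)}/(p)_{q}A\langle \omega \rangle_{q(-1)} \;\cong\; \bigoplus_{n\geq 0}\bigl(A/(p)_{q}A\bigr)\,\omega^{\{n\}_{q}}
\]
is a free $A/(p)_{q}A$-module, whence its $p^{\infty}$-torsion is the direct sum of copies of the $p^{\infty}$-torsion of $A/(p)_{q}A$; in particular it is annihilated by any power of $p$ annihilating the latter. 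Since $A$ is bounded such a power exists, so $A\langle \omega \rangle_{q(-1)}/(p)_{q}$ has bounded $p^{\infty}$-torsion, and therefore $A\langle \omega \rangle_{q(-1)}$ is bounded. The corollary then follows.

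I do not expect any genuine obstacle here: the substantive input — in particular the triangularity congruence \eqref{cong} turning the $\delta$-iterates of $\omega$ into a basis — has already been supplied in the proof of Theorem~\ref{pqenv}. The only points calling for a little care are bookkeeping ones: checking that boundedness is inherited along the (infinite-rank) free extension $A\to A\langle \omega \rangle_{q(-1)}$, and correctly invoking the passage from a bounded $(p)_{q}$-envelope to its completion. One should also note in passing that $(A[\xi],(\xi))$ is indeed a $\delta$-pair over $R$, i.e.\ that the symmetric $\delta$-structure on $A[\xi]$ restricts to the given one on $A$ — but this is part of the very definition of the symmetric $\delta$-structure.
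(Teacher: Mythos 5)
Your proposal is correct and follows the same route the paper intends: the corollary is stated as an immediate consequence of Theorem~\ref{pqenv} together with the remark that the completion of a bounded $(p)_{q}$-envelope is the prismatic envelope, and your verification that $A\langle \omega \rangle_{q(-1)}$ is bounded (via its freeness as an $A$-module) is exactly the bookkeeping the paper leaves implicit. Your aside about the hypothesis ($p$-torsion free in the theorem versus bounded in the corollary) is a sensible precaution, but it does not change the argument.
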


Finally, we also have the following:

\begin{prop} \label{univ}
Let $A$ be a complete bounded $R$-algebra with topologically \'etale coordinate $x$.
Then $\widehat {A\langle\omega\rangle}_{q(-1)}$ is the prismatic envelope of $(P_{A/R}, I_{A/R})$.
\end{prop}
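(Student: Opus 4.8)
The plan is to reduce to Corollary~\ref{prismenv} by comparing the $\delta$-pair $(P_{A/R}, I_{A/R})$ with the $\delta$-pair $(A[\xi], (\xi))$, where $A[\xi]$ carries the \emph{symmetric} $\delta$-structure relative to $x$. (Proposition~\ref{frolifp} endows $A$ with its rank-one $\delta$-structure $\delta(x)=0$, hence $P_{A/R}=A\otimes_R A$ with the tensor-product $\delta$-structure over $R$.) The bridge is the $A$-algebra map $\iota\colon A[\xi]\to P_{A/R}$, $\xi\mapsto\tau:=1\otimes x-x\otimes 1$, which sends $(\xi)$ into $I_{A/R}$. First I would check that $\iota$ is a $\delta$-morphism: the map $A\to P_{A/R}$, $f\mapsto f\otimes 1$, is a $\delta$-morphism sending $x$ to the rank-one element $x\otimes 1$, and $x\otimes 1+\tau=1\otimes x$ is again of rank one, so the characterization of the symmetric $\delta$-structure as the unique one extending $\delta_A$ for which $x+\xi$ has rank one forces $\iota$ to respect $\delta$. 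The same computation shows, more usefully, that for any $\delta$-ring $S$ equipped with a $\delta$-morphism $A\to S$, a ring map $A[\xi]\to S$ over it with $\xi\mapsto s$ is a $\delta$-morphism if and only if $x_S+s$ has rank one, where $x_S$ denotes the image of $x$; this follows from $\delta(a+b)=\delta(a)+\delta(b)-\sum_{i=1}^{p-1}\tfrac1p\binom pi a^i b^{p-i}$.

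Next I would carry out the following steps. \textbf{(a)} Since $x$ is a $q$-coordinate by Proposition~\ref{liftsig}, the case $n=0$ of the isomorphism $A[\xi]/(\xi^{(n+1)_q})\simeq P_{A/R,(n)_q}$ shows that $I_{A/R}$ is topologically generated by $\tau$; as $(p)_qB$ is closed in any $(p)_q$-prism $B$ and every $R$-algebra map $P_{A/R}\to B$ is continuous, a $\delta$-morphism $v\colon P_{A/R}\to B$ has $v(I_{A/R})\subseteq(p)_qB$ as soon as $v(\tau)\in(p)_qB$. \textbf{(b)} Because $A$ is topologically étale over $R[x]$ with $x$ of rank one while $B$ is $(p,q-1)$-complete with $p$ and $q-1$ topologically nilpotent, evaluation at $x$ gives a bijection $\mathrm{Hom}_{\delta\text{-}R}(A,B)\xrightarrow{\sim}\{y\in B:\delta(y)=0\}$, by the same $\mathrm W_1$-lifting argument as in the proof of Proposition~\ref{frolifp}. \textbf{(c)} Combining (a), (b) and the $(p)_q$-torsion-freeness of $B$, I would identify — functorially in the $(p)_q$-prism $B$ — the set of $\delta$-morphisms of pairs $(P_{A/R},I_{A/R})\to(B,(p)_q)$ with the set of pairs $(y_1,b)\in B^2$ such that $\delta(y_1)=0$ and $\delta(y_1+(p)_qb)=0$, by sending $v$ to $(y_1,b)$ with $v\circ p_1\leftrightarrow y_1$ and $v(\tau)=(p)_qb$; and likewise identify the set of $\delta$-morphisms $(A[\xi],(\xi))\to(B,(p)_q)$ with \emph{the same} set of pairs $(y_1,b)$ (now $\xi\mapsto(p)_qb$), using the first paragraph together with (b). These two identifications are compatible with precomposition by $\iota$, so $-\circ\iota$ is a natural isomorphism between the two Hom-functors. \textbf{(d)} By Corollary~\ref{prismenv} the functor on $(p)_q$-prisms represented by $\widehat{A\langle\omega\rangle}_{q(-1)}$ is $\mathrm{Hom}\big((A[\xi],(\xi)),-\big)$; hence it also represents $\mathrm{Hom}\big((P_{A/R},I_{A/R}),-\big)$, and the universal morphism $P_{A/R}\to\widehat{A\langle\omega\rangle}_{q(-1)}$ is the one with $f\otimes 1\mapsto f$ and $\tau\mapsto(p)_q\omega$, i.e.\ it extends $\iota$ followed by the structure map $A[\xi]\to\widehat{A\langle\omega\rangle}_{q(-1)}$. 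Boundedness and completeness of $\widehat{A\langle\omega\rangle}_{q(-1)}$ are already part of Corollary~\ref{prismenv}.

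The step I expect to be most delicate is (c): one must check carefully that the dictionary with the pairs $(y_1,b)$ is genuinely natural in $B$ and that restriction along $\iota$ really transports it identically, which simultaneously invokes the $\delta$-compatibility of $\iota$, the formal étaleness underlying (b), and the closedness of $(p)_qB$ from (a). As an independent sanity check one can first treat the universal case $R=\mathbb Z[q]_{(p,q-1)}$, $A=R[x]$, where $P_{A/R}=R[x_1,x_2]$ is an honest polynomial ring and $\iota$ is an isomorphism of $\delta$-pairs onto $(R[x_1,x_2],(x_2-x_1))$, so the statement is immediate from Corollary~\ref{prismenv}; but the functorial argument above has the advantage of handling the topologically étale case directly, with no descent step.
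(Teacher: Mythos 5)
Your overall plan---reduce to Corollary \ref{prismenv} by comparing the Hom-functors of the $\delta$-pairs $(A[\xi],(\xi))$ and $(P_{A/R},I_{A/R})$ along $\iota\colon \xi\mapsto\tau:=1\otimes x-x\otimes 1$---is the right one, and is in outline what the paper does (it defers to the analogous argument of theorem 5.2 of \cite{GrosLeStumQuiros20}). But steps (a) and (b), hence the dictionary in (c), are false for a general topologically \'etale coordinate, which is precisely the generality the proposition is about. For (b): by formal \'etaleness a morphism $A\to B$ is determined by its restriction to $R[X]$ \emph{together with} its reduction modulo an ideal of definition of $B$, not by the image of $x$ alone, and not every rank-one element is hit. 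Concretely, take $p$ odd, $A=R\{t,t^{-1}\}$ with topologically \'etale coordinate $x:=t^2-1$, and $B=A$: the $R[x]$-automorphism $\sigma\colon t\mapsto -t$ is a $\delta$-automorphism (by the uniqueness in Proposition \ref{frolifp}) fixing $x$, so evaluation at $x$ is not injective on $\mathrm{Hom}_{\delta\text{-}R}(A,B)$. For (a): with $v:=\mathrm{id}\otimes\sigma\colon P_{A/R}\to A$ one has $v(\tau)=\sigma(x)-x=0\in(p)_qA$, yet $v(1\otimes t-t\otimes 1)=-2t$ is a unit, so $v(I_{A/R})\not\subseteq(p)_qA$. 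In particular $I_{A/R}$ is \emph{not} topologically generated by $\tau$ (the $n=0$ case of the coordinate isomorphism only identifies $\widehat P_{A/R}/\overline{I_{A/R}}$ with $A$; the higher cases only give generation modulo $\overline{I_{A/R}^{(n+1)_q}}$). Consequently neither Hom-set is the set of pairs $(y_1,b)$ of step (c); your identifications hold only in the universal case $A=R[x]$, and the general case does not reduce to it by this argument.

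The repair is to keep the first component as a \emph{morphism}: a morphism of $\delta$-pairs $(P_{A/R},I_{A/R})\to(B,(p)_q)$ is a pair of $\delta$-morphisms $v_1,v_2\colon A\to B$ with $v_2\equiv v_1 \bmod (p)_qB$. Since $(p)_qB\subseteq(p,q-1)B$ and $A$ is formally \'etale over $R[X]$ while $B$ is complete, $v_2$ is uniquely determined by $v_1$ and $v_2(x)=v_1(x)+(p)_qb$, so restriction along $\iota$ is injective. For surjectivity, starting from a $\delta$-morphism $(A[\xi],(\xi))\to(B,(p)_q)$ with restriction $v_1$ and $\xi\mapsto(p)_qb$, one constructs $v_2$ as the unique lift of $v_1 \bmod (p,q-1)$ sending $x$ to $v_1(x)+(p)_qb$; it is a $\delta$-map because $v_1(x)+(p)_qb$ has rank one (the $\mathrm W_1$-argument of Proposition \ref{frolifp}). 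The remaining point, which your proof never establishes, is the congruence $v_2\equiv v_1\bmod(p)_qB$, i.e.\ $v(I_{A/R})\subseteq(p)_qB$: for this one notes that in $\overline B:=B/(p)_qB$ one has $(q-1)^{p-1}\in p\overline B$, so the $(p,q-1)$-adic and $p$-adic topologies on $\overline B$ coincide and $\overline B$ is complete for them; formal \'etaleness then forces two maps $A\to\overline B$ that agree on $R[X]$ and modulo $(p,q-1)$ to be equal. This \'etale-lifting bookkeeping is exactly what the argument borrowed from \cite{GrosLeStumQuiros20} supplies, and it is what your element-wise dictionary in (a)--(c) skips.
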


\begin{proof}
Using corollary \ref{prismenv}, this is completely analog to the proof of theorem 5.2 in \cite{GrosLeStumQuiros20}.
\end{proof}

\section{Cartier descent and prismatic crystals} \label{despr}

We keep the same notations as before: $R$ is a $\delta$-ring, $q \in R$ satisfies $\delta(q)=0$ and we assume that $R$ is a $\mathbb Z[q]_{(p,q-1)}$-algebra.
We also assume now that $R$ is bounded (definition \ref{bnd}) so that $(\widehat R, (p)_{q})$ is a (bounded) prism. 

\begin{dfn}
If $\mathcal X$ is a $p$-adic formal scheme\footnote{We will only consider topologically smooth formal schemes - otherwise, the definition has to be modified.} over $R/(p)_{q}$, then the \emph{$(p)_{q}$-prismatic site of $\mathcal X/R$} is the category opposite\footnote{We'd rather stay close to the geometric feeling.} to the category of all $(p)_{q}$-prisms $B$ over $R$ endowed with a morphism $\mathrm{Spf}(B/(p)_{q}) \to \mathcal X$ over $R/(p)_{q}$.
\end{dfn}

We will denote this category by $(p)_{q}\mathrm{-PRIS}(\mathcal X/R)$ and endow it with the coarse topology.
Also, when $\mathcal X = \mathrm{Spf}(S)$ is affine, we will write $S/R$ instead of $\mathcal X/R$.

\begin{rmks}
\begin{enumerate}
\item Assume $S$ is a complete $p$-adic $R/(p)_{q}$-algebra.
Then an object of $(p)_{q}\mathrm{-PRIS}(S/R)$ is essentially a commutative diagram
\[
\xymatrix{R \ar@{->>}[d] \ar[rr] && B \ar@{->>}[dd] \\ R/(p)_{q} \ar[d] && \\ S \ar[rr] &&B/(p)_{q}}
\]
where $B$ is a complete bounded $\delta$-$R$-algebra.
Since $R$ is fixed, we can even simply use the one-line diagram $S \to B/(p)_{q} \twoheadleftarrow B$.
\item
One may define in general the \emph{prismatic site} $\mathrm{PRIS}$ which is the category opposite to the category of all bounded prisms (that we did not define here).
\item 
There is no final object in $\mathrm{PRIS}$ but we may consider the slice category $(p)_{q}\mathrm{-PRIS}_{/R}$ of all bounded prisms over $(\widehat R, (p)_{q})$.
This category is a lot simpler and actually (anti) equivalent to the category of all complete bounded $\delta$-$R$-algebras $B$.
\item
We may fiber the category $(p)_{q}\mathrm{-PRIS}_{/R}$ over the category of $p$-adic $R/(p)_{q}$-formal schemes, and then the category $(p)_{q}\mathrm{-PRIS}(\mathcal X/R)$ is exactly the fiber over $\mathcal X$ (but there is no final objet anymore).
\item
Our category $(p)_{q}\mathrm{-PRIS}(\mathcal X/R)$ is the same as (or more precisely the opposite to) the category $(\mathcal X/R)_{\mathbb {\Delta}}$ in definition 4.1 of \cite{BhattScholze19}.
We choose to stick to the classical notations.
\end{enumerate}
\end{rmks}

\begin{prop} \label{last}
Let $A$ be a complete $R$-algebra with rank one topologically \'etale coordinate $x$ and $\overline A := A/(p)_{q}$.
We denote by $S:= R[w]^\delta$ and we endow
\[
U := S \widehat \otimes_{R} A\ (\simeq \widehat {A[w]}^{\delta})
\]
with the unique $\delta$-structure such that $\delta(x) = (p)_{q}w$.
Then $U$ is a covering of (the final object of the topos associated to) $(p)_{q}\mathrm{-PRIS}(\overline A/R)$.
\end{prop}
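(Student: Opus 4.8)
The plan is to check that $U$ is an object of $(p)_{q}\text{-PRIS}(\overline A/R)$, to read off the universal property of $U$ as a lifting problem for $\delta$-structures, and — since for the coarse topology we only need a \emph{covering} of the final object — to solve that lifting problem after a faithfully flat base change obtained by adjoining one $\delta$-variable subject to a single relation.

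First I would verify that $U$ lies in the site. As $R[X]\to A$ is topologically \'etale, $A$ is flat over $R$, and $S=R[w]^{\delta}=R[w_{0},w_{1},\dots]$ is free over $R$; hence $U=S\widehat\otimes_{R}A$ is flat over $R$, so $(p)_{q}$-torsion free and bounded, and (being a completed tensor product) complete, so $(U,(p)_{q})$ is a $(p)_{q}$-prism over $R$. Moreover $x$ is a topologically \'etale coordinate of $A[w_{0},w_{1},\dots]$ over $S$, and running the argument of proposition \ref{frolifp} while \emph{prescribing} $\phi(x)=x^{p}+p(p)_{q}w$ (which is $\equiv x^{p}$ modulo $p$, hence admissible) instead of $\phi(x)=x^{p}$ produces the unique $\delta$-$S$-algebra structure on $U$ with $\delta(x)=(p)_{q}w$. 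Reducing $A\to U$ modulo $(p)_{q}$ gives an $R/(p)_{q}$-algebra map $\overline A\to U/(p)_{q}$, so $U$ with this structure map is an object of the site.

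Next I would record the universal property: for a $(p)_{q}$-torsion free complete $\delta$-$R$-algebra $B$, a $\delta$-$R$-morphism $U\to B$ is the same as a continuous $R$-algebra map $f\colon A\to B$ with $\delta_{B}(f(x))\in(p)_{q}B$ (the image of $x$ must be such an $f(x)$, and then $w\mapsto\delta_{B}(f(x))/(p)_{q}$ and all $\delta^{i}(w)$ are forced). By the definition of coverings in the coarse topology it then suffices to show that every object $(B,g)$ of the site is refined by an object admitting a morphism to $U$; concretely, I would produce a faithfully flat $\delta$-$R$-algebra $B'$ over $B$, with structure map $\overline A\xrightarrow{g}\overline B\to\overline{B'}$, together with a $\delta$-$R$-morphism $U\to B'$ over the structure maps. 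To this end, choose $\tilde x_{0}\in B$ lifting $g(\overline x)$; since $R[X]\to A$ is formally \'etale and $B$ is complete, $X\mapsto\tilde x_{0}$ extends to a continuous $R$-algebra map $A\to B$ lifting $g$, and we set $d_{0}:=\delta_{B}(\tilde x_{0})$. Let $B'$ be the (completed) prismatic envelope of the $\delta$-pair $\bigl(B[z]^{\delta},(\phi(z)+d_{0})\bigr)$: it is a $(p)_{q}$-prism, faithfully flat over $B$, in which $\phi_{B'}(z)+d_{0}\in(p)_{q}B'$. Put $\tilde x:=\tilde x_{0}+(p)_{q}z\in B'$, still a lift of $g(\overline x)$. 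The decisive computation is the congruence, modulo $(p)_{q}B'$,
\[
\delta_{B'}(\tilde x)=\delta_{B'}(\tilde x_{0})+\delta_{B'}\bigl((p)_{q}z\bigr)-C_{p}\bigl(\tilde x_{0},(p)_{q}z\bigr)\equiv d_{0}+\phi_{B'}(z)\equiv 0,
\]
where $C_{p}(a,b)=\tfrac1p\sum_{i=1}^{p-1}\binom pi a^{p-i}b^{i}\in(p)_{q}B'$, and $\delta_{B'}\bigl((p)_{q}z\bigr)=\phi((p)_{q})\,\delta_{B'}(z)+\delta((p)_{q})\,z^{p}\equiv p\,\delta_{B'}(z)+z^{p}=\phi_{B'}(z)$ because $\phi((p)_{q})=(p)_{q^{p}}\equiv p$ (since $q^{p}-1=(q-1)(p)_{q}$) and $\delta((p)_{q})\equiv 1$ modulo $(p)_{q}$ (an elementary $q$-analogue computation, itself responsible for $(\widehat R,(p)_{q})$ being a prism). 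Hence $\delta_{B'}(\tilde x)\in(p)_{q}B'$, and the universal property of $U$ applied to $A\to B'$, $x\mapsto\tilde x$, yields the sought morphism $U\to B'$.

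The hard part is this last step. On one hand there is the congruence above, which is exactly what lets one collapse the covering problem to imposing the \emph{single} $\delta$-relation $\phi(z)=-d_{0}$, and where the particular shape of $(p)_{q}$ is used. On the other hand — the point demanding the most care, in the spirit of theorem \ref{pqenv} and corollary \ref{prismenv} — one must check that the prismatic envelope $B'$ is genuinely faithfully flat over $B$ (equivalently, do the bookkeeping that makes $B'\to B$ a covering for the coarse topology and the resulting descent effective); everything else is the routine verification that the constructions are compatible with the structure maps.
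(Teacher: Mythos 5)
Your preparatory steps are fine (that $U$ is a bounded prism over $R$, the identification of $\delta$-$R$-maps $U\to B$ with lifts $\tilde x\in B$ of $g(\bar x)$ satisfying $\delta_{B}(\tilde x)\in(p)_{q}B$, and the congruence $\delta(\tilde x_{0}+(p)_{q}z)\equiv\delta(\tilde x_{0})+\phi(z)\bmod (p)_{q}$, using $\phi((p)_{q})\equiv p$ and $\delta((p)_{q})\equiv 1\bmod (p)_{q}$). The gap is in the reduction you build the whole proof on: ``it suffices to show that every object is refined by an object admitting a morphism to $U$''. For the \emph{coarse} topology, which is the topology the proposition uses, the only covering sieves are the maximal ones, so $h_{U}\to e$ is an epimorphism if and only if \emph{every} object $(B,g)$ of the site itself admits a morphism to $U$, i.e.\ if and only if a lift $\tilde x\in B$ with $\delta_{B}(\tilde x)\in(p)_{q}B$ exists \emph{in $B$}. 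A faithfully flat map of prisms $B\to B'$ is not a covering for the coarse topology; it is a covering for the flat topology, which is exactly the refinement the paper explicitly postpones to a forthcoming article. So what your construction (adjoining a $\delta$-variable $z$ and forming the prismatic envelope forcing $\phi(z)+d_{0}\in(p)_{q}B'$) establishes is the flat-topology covering statement, in the style of the \v Cech--Alexander verification of Bhatt--Scholze, not the statement of the proposition. Your own computation makes the difference visible: inside $B$ the condition amounts to solving $\phi(t)\equiv-\delta(\tilde x_{0})/\delta((p)_{q})$ modulo $(p)_{q}$, i.e.\ to a Frobenius-surjectivity condition modulo $(p,(p)_{q})$ which has no reason to hold in a general object of the site, and which $B'$ is introduced precisely to enforce. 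The paper's own proof is of a different nature: it is the direct Berthelot--Ogus-type argument (lift $\overline A\to B/(p)_{q}$ to $A\to B$ by formal \'etaleness of the coordinate and conclude by the universal property of $U$), with no flat localization at all; an argument for the proposition as stated must produce the morphism to $U$ for the given $B$ itself, which your route does not do.

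A secondary gap, which you flag yourself but do not close, is the faithful flatness of the envelope $B'$ over $B$: this is a genuine step (it needs a flatness criterion for prismatic envelopes, e.g.\ that $\phi(z)+d_{0}$ is, modulo $p$, a monic hence regular element of $B\{z\}^{\wedge}$), and without it even the flat-topology version of your argument is incomplete. If you want to keep your strategy, you should either prove directly that a morphism $U\to B$ exists for every object $B$ of the coarse site (no extension of $B$ allowed), or state clearly that what you prove is the covering property for the flat topology and compare it with the paper's coarse-topology claim.
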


\begin{proof}
This follows from various universal properties as in theorem 6.6 of \cite{BerthelotOgus78} for example (see also proposition 7.4 of \cite{GrosLeStumQuiros19}).
\end{proof}

\begin{rmk}
We will show in a forthcoming article that $A$ itself is a covering of the final object if we use the flat topology instead of the coarse topology.
\end{rmk}

A sheaf on $(p)_{q}\mathrm{-PRIS}(\mathcal X/R)$ is given by a family of sets $E_{B}$ for each $B \in (p)_{q}\mathrm{-PRIS}(\mathcal X/R)$ and a compatible family of transition maps $E_{B} \to E_{B'}$ for each morphism $B \to B'$.
The structural sheaf $\mathcal O_{\mathcal X/R}$ corresponds to the case $E_{B} = B$ and an $\mathcal O_{\mathcal X/R}$-module is given by a family of $B$-modules $E_{B}$ together with (linear) transition maps $C \otimes_{B} E_{B} \to E_{C}$.

\begin{dfn}
A \emph{$(p)_{q}$-prismatic crystal} on $\mathcal X/R$ is a sheaf of $\mathcal O_{\mathcal X/R}$-modules on $(p)_{q}\mathrm{-PRIS}(\mathcal X/R)$ with bijective transition maps $C \otimes_{B} E_{B} \simeq E_{C}$.
\end{dfn}

\begin{rmk}
Alternatively, this means that
\[
\mathcal O_{\mathcal X/R} \otimes_{\Gamma(\mathcal X/R, O_{\mathcal X/R})} \Gamma(\mathcal X/R, E) \simeq E.
\]
\end{rmk}

Definition \ref{hyperq} can be generalized (it follows from proposition \ref{univ} that this is indeed a generalization):

\begin{dfn}
Let $B$ be a complete bounded $\delta$-$R$-algebra and $C$ the prismatic envelope of the diagonal in $B \otimes_{R} B$.
A \emph{twisted hyper-stratification of level $-1$} on a $B$-module $M$ is a $C$-linear isomorphism
\[
\epsilon : C \otimes'_{B} M \simeq M \otimes_{B} C
\]
satisfying the usual normalization and cocycle conditions.
\end{dfn}

Note that, as we have already remarked, it is usually very hard to compute $C$.
We will denote by $\widehat{\mathrm{Strat}}_{q}^{(-1)}(B/R)$ the category of $B$-modules endowed with a twisted hyper-stratification of level $-1$.

\begin{prop} \label{pris1}
Let $\mathcal X$ be a formal $R/(p)_{q}$-scheme, $B \in \mathrm{PRIS}(\mathcal X/R)$ and $C$ the prismatic envelope of the diagonal in $B \otimes_{R} B$.
If $E$ is a $(p)_{q}$-prismatic crystal on $\mathcal X/R$, then there exists a unique twisted hyper-stratification of level $-1$ on $E_{B}$ given by
\[
\epsilon : C \otimes'_{B} E_{B} \simeq E_{C} \simeq E_{B} \otimes_{B} C
\]
\end{prop}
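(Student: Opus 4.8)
The plan is to run the classical crystal-to-stratification argument in the prismatic site: the isomorphism $\epsilon$ is assembled from the two transition isomorphisms of $E$ along the two canonical maps $B\rightrightarrows C$, and the normalization and cocycle conditions come from the corresponding identities in the site.

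First I would record the relevant morphisms. Being a prismatic envelope, $C$ is a complete bounded $\delta$-$R$-algebra, so $(C,(p)_{q})$ is a $(p)_{q}$-prism over $R$. Composing the two coprojections $p_{1},p_{2}\colon B\to B\otimes_{R}B$ with the structural map $B\otimes_{R}B\to C$ gives two homomorphisms of $\delta$-rings $B\to C$; since the diagonal ideal $I\subset B\otimes_{R}B$ is carried into $(p)_{q}C$, these two maps agree modulo $(p)_{q}$, so the composite $\mathrm{Spf}(C/(p)_{q})\to\mathrm{Spf}(B/(p)_{q})\to\mathcal X$ does not depend on the choice. This exhibits $C$ as an object of $(p)_{q}\mathrm{-PRIS}(\mathcal X/R)$ and makes $p_{1},p_{2}$ into two morphisms $B\to C$ in this category. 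Dually, the multiplication $B\otimes_{R}B\to B$ is a $\delta$-ring map killing $I$; as $B$ is $(p)_{q}$-torsion free (being a prism), the universal property of $C$ produces a morphism $\Delta\colon C\to B$ in the site with $\Delta\circ p_{1}=\Delta\circ p_{2}=\mathrm{Id}_{B}$.

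Next I would build $\epsilon$. Because $E$ is a crystal, the transition map along $p_{2}$ is an isomorphism $C\otimes'_{B}E_{B}\simeq E_{C}$ (the prime recording that $C$ is a $B$-algebra via $p_{2}$) and the one along $p_{1}$ is an isomorphism $E_{B}\otimes_{B}C\simeq E_{C}$; I define $\epsilon$ as the composite of the first with the inverse of the second. It is $C$-linear because both transition maps are, and it is forced by compatibility with the two identifications with $E_{C}$, so uniqueness is automatic. Pulling $\epsilon$ back along $\Delta$ and using $\Delta\circ p_{i}=\mathrm{Id}_{B}$ together with the transitivity of the crystal transition maps gives $\Delta^{*}(\epsilon)=\mathrm{Id}_{E_{B}}$, the normalization condition. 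For the cocycle condition I would introduce $D$, the prismatic envelope of the diagonal in $B\otimes_{R}B\otimes_{R}B$; exactly as for $C$ one checks that $D$ is an object of $(p)_{q}\mathrm{-PRIS}(\mathcal X/R)$, with three morphisms $q_{1},q_{2},q_{3}\colon B\to D$ and three morphisms $p_{12},p_{13},p_{23}\colon C\to D$, the map $p_{ij}$ having $q_{i}$ and $q_{j}$ as its left and right structures. The crystal yields $D$-linear isomorphisms $\rho_{i}\colon D\otimes_{B,q_{i}}E_{B}\simeq E_{D}$, and transitivity of the transition maps identifies $p_{ij}^{*}(\epsilon)$ with $\rho_{i}^{-1}\circ\rho_{j}$; the tautology $\rho_{1}^{-1}\circ\rho_{3}=(\rho_{1}^{-1}\circ\rho_{2})\circ(\rho_{2}^{-1}\circ\rho_{3})$ then gives $p_{13}^{*}(\epsilon)=p_{12}^{*}(\epsilon)\circ p_{23}^{*}(\epsilon)$, i.e.\ the cocycle condition. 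The main obstacle is not conceptual but organisational: one must confirm that the threefold envelope $D$ exists as an object of the site (reducing, if needed, to a local situation with a topologically \'etale coordinate and the threefold analogue of Proposition \ref{univ}) and keep careful track of the left versus right ($\otimes$ versus $\otimes'$) module structures so that composition of transition maps matches the composition law built into the definition of a hyper-stratification.
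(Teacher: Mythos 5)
Your proposal is correct and is essentially the paper's argument: the paper simply observes that the stratification comes from the crystal's transition isomorphisms along the two maps $B \rightrightarrows C$ and that the normalization and cocycle conditions are automatic by functoriality, which is exactly what you spell out (including the triple envelope for the cocycle condition).
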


\begin{proof}
The cocycle and normalization conditions are automatic by functoriality.
\end{proof}

This defines a functor
\[
\{(p)_{q}\mathrm{-prismatic}\ \mathrm{crystals}\ \mathrm{on}\ \mathcal X/R\} \to \widehat{\mathrm{Strat}}_{q}^{(-1)}(B/R).
\]

As a consequence, we see that the notion of a prismatic crystal is closely related to the constructions made in section \ref{neg}:

\begin{cor} \label{pris1}
Let $A$ be a complete $R$-algebra with rank one topologically \'etale coordinate $x$ and $\overline A = A/(p)_{q}$.
Then, there exists a functor
\[
\{(p)_{q}\mathrm{-prismatic}\ \mathrm{crystals}\ \mathrm{on}\ \overline A /R\} \to \widehat{\mathrm{Strat}}_{q}^{(-1)}(A/R) \to \mathrm{MIC}_{q}^{(-1)}(A/R)
\]
sending $E$ to $E_{A}$. \qed
\end{cor}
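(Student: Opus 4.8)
\begin{pf}
The plan is to realize the desired functor as a composite of maps already available in the paper.

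First I would check that $A$, endowed with the $\delta$-structure of proposition \ref{frolifp} (for which $x$ has rank one), is an object of $(p)_{q}\mathrm{-PRIS}(\overline A/R)$. Since $x$ is a topologically \'etale coordinate, $R[X] \to A$ is flat, and $R[X]$ is flat over the bounded ring $R$; hence $A$ is flat over $R$, and therefore bounded in the sense of definition \ref{bnd}. As $A$ is also complete, $(A, (p)_{q})$ is a $(p)_{q}$-prism over $R$, and the identity $\overline A = A/(p)_{q}$ provides the structure morphism $\mathrm{Spf}(A/(p)_{q}) \to \mathrm{Spf}(\overline A)$, so that $A$ lies in $(p)_{q}\mathrm{-PRIS}(\overline A/R)$.

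Next, by the preceding proposition applied with $B = A$, every $(p)_{q}$-prismatic crystal $E$ on $\overline A/R$ produces a twisted hyper-stratification of level $-1$ on $E_{A}$ relative to the prismatic envelope $C$ of the diagonal in $A \otimes_{R} A$, functorially in $E$ because the transition maps of $E$ are bijective. The only thing to verify here is that this is a twisted hyper-stratification of level $-1$ in the sense of definition \ref{hyperq}, i.e.\ with respect to $\widehat {A\langle \omega \rangle}_{q(-1)}$; but this is exactly proposition \ref{univ}, which identifies the prismatic envelope of $(P_{A/R}, I_{A/R})$ with $\widehat {A\langle \omega \rangle}_{q(-1)}$ as soon as $x$ is a topologically \'etale coordinate. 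Thus $E \mapsto E_{A}$ is a functor into $\widehat{\mathrm{Strat}}_{q}^{(-1)}(A/R)$.

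Finally I would compose with the fully faithful functor $\widehat{\mathrm{Strat}}_{q}^{(-1)}(A/R) \to \mathrm{Strat}_{q}^{(-1)}(A/R) \simeq \mathrm{MIC}_{q}^{(-1)}(A/R)$ of section \ref{neg}, under which a twisted hyper-stratification of level $-1$ induces a twisted stratification of level $-1$, equivalently a twisted connection of level $-1$. The resulting composite sends $E$ to the $A$-module $E_{A}$ with its induced twisted connection of level $-1$. The single genuine point in the argument is the reconciliation of the two a priori distinct meanings of ``twisted hyper-stratification of level $-1$'', namely the one of section \ref{neg} built on $\widehat {A\langle \omega \rangle}_{q(-1)}$ and the one of section \ref{despr} built on an abstract prismatic envelope, and this is supplied by proposition \ref{univ}; everything else is functoriality.
\end{pf}
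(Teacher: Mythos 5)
Your proposal is correct and follows exactly the route the paper intends: the corollary is obtained by taking $B=A$ in the preceding proposition, using proposition \ref{univ} to identify the prismatic envelope of the diagonal with $\widehat{A\langle\omega\rangle}_{q(-1)}$ (which is precisely why the paper says definition \ref{hyperq} is generalized), and composing with the functor $\widehat{\mathrm{Strat}}_{q}^{(-1)}(A/R) \to \mathrm{MIC}_{q}^{(-1)}(A/R)$ from section \ref{neg}. Your additional check that $A$ is a bounded (hence prismatic) object of the site is a reasonable elaboration of what the paper leaves implicit.
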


\begin{rmk}
We will show in a forthcoming article that the first functor is an equivalence, and consequently, that the composite map is also an equivalence if we restrict to locally finitely presented crystals and finitely presented modules with a topologically quasi-nilpotent connection.
\end{rmk}

\begin{cor}
Let $A$ be a complete $R$-algebra with rank one topologically \'etale coordinate $x$.
We denote by $S:= R[w]^\delta$ and we endow $U := S \widehat \otimes_{R} A\ (\simeq \widehat {A[w]}^{\delta})$ with the unique $\delta$-structure such that $\delta(x) = (p)_{q}w$.
Then there exists an equivalence of categories
\[
\{(p)_{q}\mathrm{-prismatic}\ \mathrm{crystals}\ \mathrm{on}\ \overline A/R\} \simeq \widehat{\mathrm{Strat}}_{q}^{(-1)}(U/R)
\]
\end{cor}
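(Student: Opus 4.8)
The plan is to deduce the statement from proposition \ref{last}, which asserts that $U$ is a covering of the final object of the topos attached to $(p)_{q}\mathrm{-PRIS}(\overline A/R)$, together with the standard descent formalism for crystals along a covering (compare theorem 6.6 of \cite{BerthelotOgus78} and \S 5 of \cite{GrosLeStumQuiros20}). One direction of the equivalence is already available: $U$ is a complete bounded $\delta$-$R$-algebra which is an object of the site, so the functor $\{(p)_{q}\text{-prismatic crystals}\}\to\widehat{\mathrm{Strat}}_{q}^{(-1)}(B/R)$ constructed just above, applied with $B=U$, sends a $(p)_{q}$-prismatic crystal $E$ to $E_{U}$ equipped with the canonical isomorphism $\epsilon\colon C\otimes'_{U}E_{U}\simeq E_{C}\simeq E_{U}\otimes_{U}C$, where $C$ is the prismatic envelope of the diagonal in $U\otimes_{R}U$; this $\epsilon$ is a twisted hyper-stratification of level $-1$. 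It remains to see that $E\mapsto(E_{U},\epsilon)$ is an equivalence.

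Since $U$ covers the final object, a sheaf of $\mathcal O_{\overline A/R}$-modules is determined by its restriction to the \v Cech nerve of $U$ together with the resulting descent datum, and for a crystal $E$ this restriction amounts to the single $U$-module $E_{U}$ equipped with an isomorphism over the self-product $U(1)$ subject to the normalization condition over $U$ and to the cocycle condition over the triple self-product $U(2)$; conversely, every such datum is effective. Thus the category of $(p)_{q}$-prismatic crystals is equivalent to that of $U$-modules endowed with a descent datum relative to $U$, and the whole problem reduces to identifying the latter with $\widehat{\mathrm{Strat}}_{q}^{(-1)}(U/R)$. Here one uses the universal-property argument of proposition \ref{univ}: the self-product $U(1)$ is precisely the prismatic envelope $C$ of the diagonal in $U\otimes_{R}U$, so a descent isomorphism over $U(1)$ is exactly a $C$-linear isomorphism $C\otimes'_{U}M\simeq M\otimes_{U}C$; likewise $U(2)\simeq C\,\widehat\otimes_{U}C$ (with the correct left and right $U$-structures), using that $C$ is flat over $U$ because it is built from the topologically free algebras of twisted divided polynomials; and under these identifications the simplicial identities become exactly the normalization and cocycle conditions in the definition preceding the corollary. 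Full faithfulness then follows because a morphism of crystals is a morphism of the underlying $U$-modules compatible with the descent data, and we conclude.

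The main obstacle is precisely this matching of the low terms of the \v Cech nerve of $U$ with the data defining $\widehat{\mathrm{Strat}}_{q}^{(-1)}(U/R)$: showing $U(1)=C$ and $U(2)=C\,\widehat\otimes_{U}C$ and translating normalization and cocycle across these isomorphisms. This is the same computation as in \S 5 of \cite{GrosLeStumQuiros20} and I would simply transcribe it, the one genuinely new feature being that here $C$ admits no explicit presentation --- the coordinate $x$ does not have rank one in $U$, since $\delta(x)=(p)_{q}w\neq 0$ --- so one must argue throughout by universal properties rather than through a twisted-divided-power model of $C$. Effectivity of descent along the covering $U$ and flatness of the envelopes involved are then purely formal.
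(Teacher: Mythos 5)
Your argument is correct and is essentially the paper's own: the paper's proof is the single line ``follows from proposition \ref{last}'', i.e.\ exactly the descent-along-the-covering-$U$ formalism you spell out, with the \v Cech self-products of $U$ in the site identified (as in the remark following the corollary and in proposition \ref{univ}) with the prismatic envelope $C$ of the diagonal and its iterate, and the descent datum translated into the hyper-stratification of level $-1$. The only cosmetic point is that no flatness or fppf-effectivity is really needed here: reconstructing $E_B$ for an arbitrary object $B$ from $(E_U,\epsilon)$ is the formal crystal-gluing argument of Berthelot--Ogus, using only that every $B$ receives a morphism from $U$ and the universal property of $C$.
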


\begin{proof}
Follows from proposition \ref{last}.
\end{proof}

\begin{rmk}
Although this will follow from further investigation, we cannot prove directly that the functor
\[
\widehat{\mathrm{Strat}}_{q}^{(-1)}(U/R) \to \widehat{\mathrm{Strat}}_{q}^{(-1)}(A/R)
\]
induced by the morphism of $\delta$-$R$-algebras $\pi : U \to A, w \mapsto 0$ is an equivalence.
There exists an obvious section $s$ of the morphism $\pi$ but it is not a $\delta$-morphism.
Even more: if we denote by
\[
V := \widehat{(U \otimes_{R} U)[\omega]^\delta/((p)_{q}\omega - \xi)_{\delta}}
\]
the prismatic envelope of the diagonal in $U \otimes_{R} U$, then the section $s$ does not extend naturally to a section of the morphism $V \to A\langle \omega \rangle_{q(-1)}, w \mapsto 0$.
\end{rmk}

Recall now that we may also consider the $q$-crystalline site $q\mathrm{-CRIS}$ as we did in section 7 of \cite{GrosLeStumQuiros20} as well as its slice category $q\mathrm{-CRIS}_{/R}$ when $R$ is endowed with a $q$-PD-ideal $\mathfrak r$.
More generally, when $\mathcal X$ is a formal $R/\mathfrak r$-scheme, there exists the $q$-crystalline site $q\mathrm{-CRIS}(\mathcal X/R)$.
This is the category opposite to the category of complete bounded $q$-PD-pairs (see definition 3.1 of \cite{GrosLeStumQuiros20}) $(B, J)$ over $R$ together with a morphism of formal $R/\mathfrak r$-schemes $\mathrm{Spf}(B/J) \to \mathcal X$.
There exists a corresponding notion of a $q$-crystal and, with $A$ as in lemma \ref{pris1} and $\mathfrak a$ a closed $q$-PD-ideal containing $\mathfrak rA$, if $E$ is a $q$-crystal on $(A/\mathfrak a)/R$, then we may endow $E_{A}$ with a twisted hyper-stratification of level $0$.

Let us mention (as this will be needed in the next definition) that if $(B, J)$ is a $q$-PD-pair, then $\phi(J) \subset (p)_{q}B$ and the frobenius therefore induces a morphism $\overline \phi : B/J \to B/(p)_{q}$.
We may now recall from the proof of theorem 16.17 of \cite{BhattScholze19} the following construction (we use the notation $\widetilde {\ }$ to denote the associated topos):

\begin{dfn} \label{carmor}
Let $(R, \mathfrak r)$ be a $q$-PD-pair, $\mathcal X$ a formal $R/\mathfrak r$-scheme and $\mathcal X' := \mathcal X \widehat \otimes_{{}_{R/\mathfrak r} \nearrow\overline \phi} R/(p)_{q}$.
Then the morphism of topos
\[
C_{\mathcal X/R} : \widetilde{q\mathrm{-CRIS}}(\mathcal X/R) \to \widetilde{(p)_{q}-\mathrm{PRIS}}(\mathcal X'/R).
\]
induced by the functor that sends
\begin{enumerate}
\item a $q$-PD-pair $(B, J)$ to the prism $(B, (p)_{q})$,
\item a morphism $\mathrm{Spf}(B/J) \to \mathcal X$ to the linearization $\mathrm{Spf}(B/(p)_{q}) \to \mathcal X'$ of the composite map 
\[
\mathrm{Spf}(B/(p)_{q}) \to \mathrm{Spf}(B/J) \to \mathcal X,
\]
\end{enumerate}
is the \emph{prismatic Cartier morphism} of $\mathcal X/R$.
\end{dfn}

When $\mathcal X = \mathrm{Spf}(S)$, we may also write $C_{S/R}$ for this morphism.

\begin{rmks}
\begin{enumerate}
\item
If $E$ is a $(p)_{q}$-prismatic sheaf on $\mathcal X'/R$ and $(B, J)$ is a (complete bounded) $q$-PD-pair, then we simply have $(C_{\mathcal X/R}^{-1}E)_{B} = E_{B}$.
\item
As a particular case, we see that $C_{\mathcal X/R}^{-1}\mathcal O_{\mathcal X'/R}= \mathcal O_{\mathcal X/R}$ so that $C_{\mathcal X/S}$ is trivially a (flat) morphism of ringed topos.
It is also clear that $C_{\mathcal X/R}^{-1}$ preserves crystals.
\end{enumerate}
\end{rmks}

\begin{xmp}
Let $(R, \mathfrak r) \to (A,\mathfrak a)$ be a morphism of $q$-PD-pairs with $A$ complete and bounded and $\mathfrak a$ closed in $A$.
If $\mathcal X := \mathrm{Spf}(A/\mathfrak a)$ and we let $A' := R {}_{{}_{\phi}\nwarrow}\!\!\widehat\otimes_{R} A$, then $\mathcal X' = \mathrm{Spf}(A'/(p)_{q})$
and we have
\[
C_{\mathcal X/R}\left(\vcenter{\xymatrix{R \ar[d] \ar[r] & A \ar@{->>}[d] \\ A/\mathfrak a \ar@{=}[r] & A/\mathfrak a}}\right) = \left(\vcenter{\xymatrix{R \ar[d] \ar[r] & A \ar@{->>}[d] \\ A'/(p)_{q} \ar[r]^{\overline F} &A/(p)_{q}}}\right)
\]
in which $F$ denotes as usual the relative frobenius of $A$.
\end{xmp}

\begin{prop} \label{comdia}
Let $A$ be a complete $R$-algebra with rank one topologically \'etale coordinate $x$ and $\mathfrak a$ a closed $q$-PD-ideal in $A$.
If $\overline A := A/\mathfrak a$ and $\overline A' := A'/(p)_{q}$ with $A' := R {}_{{}_{\phi}\nwarrow}\!\!\widehat\otimes_{R} A$, then the diagram
\begin{equation} \label{diagram}
\xymatrix{
\{(p)_{q}\mathrm{-prismatic}\ \mathrm{crystals}\ \mathrm{on}\ \overline A'/R\} \ar[r]^-{C_{\overline A/R}^{-1}} \ar[d] & \{q\mathrm{-crystals}\ \mathrm{on}\ \overline A/R\} \ar[d] \\
\widehat{\mathrm{Strat}}_{q}^{(-1)}(A'/R) \ar[r]^{F^*} & \widehat{\mathrm{Strat}}_{q}^{(0)}(A/R)
}
\end{equation}
is commutative.
\end{prop}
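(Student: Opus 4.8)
The plan is to unwind both composites in \eqref{diagram} into evaluations of the \emph{same} prismatic crystal $E$ on $\overline A'/R$ and then to match the resulting level $0$ hyper-stratifications. Recall that the right vertical functor sends a $q$-crystal to its value on the $q$-PD-pair $(A,\mathfrak a)$ together with the hyper-stratification of level $0$ attached to the self-product of $(A,\mathfrak a)$ in the $q$-crystalline site, which is the $q$-PD-envelope of the diagonal in $A\otimes_R A$, namely $\widehat{A\langle\xi\rangle}_q$ (the analogue of theorem 5.2 of \cite{GrosLeStumQuiros20}); the left vertical functor sends $E$ to $E_{A'}$ together with the hyper-stratification of level $-1$ attached to the self-product of $(A',(p)_{q})$ in the prismatic site, which is the prismatic envelope of the diagonal in $A'\otimes_R A'$, i.e.\ $\widehat{A'\langle\omega\rangle}_{q(-1)}$ by proposition \ref{univ}. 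Since $(C_{\overline A/R}^{-1}E)_B = E_{C_{\overline A/R}(B)}$ for every complete bounded $q$-PD-pair $B$, the right-then-down composite amounts to evaluating $E$ on the prisms $C_{\overline A/R}(A,\mathfrak a)$ and $C_{\overline A/R}\big(\widehat{A\langle\xi\rangle}_q\big)$ and reading off the stratification, while the down-then-right composite evaluates $E$ on $(A',(p)_{q})$ and on $\widehat{A'\langle\omega\rangle}_{q(-1)}$ and then pulls back along $F$ and $\widehat{[F]}$. So it suffices to produce a compatible morphism of the two relevant diagrams of prisms over $\overline A'$.

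First I would use the example preceding this proposition: $C_{\overline A/R}(A,\mathfrak a) = (A,(p)_{q})$, regarded as a prism over $\overline A'$ through $\overline F$, and $F\colon (A',(p)_{q})\to (A,(p)_{q})$ is a morphism of prisms over $\overline A'$ (it is an $R$-linear $\delta$-morphism reducing to the structural map $\overline F$). The crystal property of $E$ then yields a canonical identification $(C_{\overline A/R}^{-1}E)_A \simeq A\,{}_{{}_{F}\nwarrow}\!\!\otimes_{A'} E_{A'}$, which is precisely the underlying module of $F^*(E_{A'})$ by proposition \ref{pullb} and the square relating $\mathrm{Strat}$ with $\mathrm{MIC}$; so the two composites already agree on underlying modules. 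The heart of the matter is the analogous statement one level up: that applying $C_{\overline A/R}$ to the self-product $\widehat{A\langle\xi\rangle}_q$ of $(A,\mathfrak a)$ produces the prism $\big(\widehat{A\langle\xi\rangle}_q,(p)_{q}\big)$ over $\overline A'$, and that the universal property of $\widehat{A'\langle\omega\rangle}_{q(-1)}$ as the prismatic envelope of the diagonal in $A'\otimes_R A'$ (proposition \ref{univ}) provides a morphism $\widehat{A'\langle\omega\rangle}_{q(-1)}\to\widehat{A\langle\xi\rangle}_q$ of prisms over $\overline A'$ which coincides with the completed divided frobenius $\widehat{[F]}$ and is compatible with the two ``projection'' morphisms from $(A',(p)_{q})$ and $(A,(p)_{q})$ respectively. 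Compatibility with the left structures holds because $[F]$ is $F$-linear for the left structure, and compatibility with the right structures is exactly the commutativity of the Taylor-map square preceding theorem \ref{mainthm}, i.e.\ $\widehat{[F]}\circ\theta = \theta\circ F$. Granting this, the crystal property of $E$ identifies $E_{(\widehat{A\langle\xi\rangle}_q,(p)_{q})}$ with $\widehat{A\langle\xi\rangle}_q\otimes_{\widehat{A'\langle\omega\rangle}_{q(-1)}} E_{A'}$ (along $\widehat{[F]}$) compatibly with both projections, hence transports the isomorphism defining the level $-1$ hyper-stratification of $E_{A'}$ to the one defining the level $0$ hyper-stratification of $(C_{\overline A/R}^{-1}E)_A$; but that transport is, by definition, $F^*$. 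The normalization and cocycle conditions are inherited automatically, and naturality in $E$ is clear, so the diagram commutes.

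The step I expect to be the main obstacle is this key identification: showing that the comparison map furnished by the universal property of the prismatic envelope of the diagonal really \emph{is} $\widehat{[F]}$, rather than merely some morphism of prisms over $\overline A'$. This comes down to matching the construction of the prismatic Cartier morphism ---  built from the relative Frobenius, i.e.\ linearization along $\overline\phi$ ---  with the fact, proved in theorem \ref{propdivf} (section 7 of \cite{GrosLeStumQuiros19}), that $[F]$ is the \emph{unique natural} extension through blowing up of the relative Frobenius $F\colon A'[\xi]\to A[\xi]$. Concretely one checks that, before completing and blowing up, on the image of $A'[\xi]$ the $q$-PD-envelope comparison map visibly restricts to $F\colon A'[\xi]\to A[\xi]$, so the uniqueness in theorem \ref{propdivf} (together with corollary \ref{prismenv}) forces it to be $[F]$. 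The remaining verifications ---  that all structure maps over $\overline A'$ match, and that the $q$-crystalline self-product of $(A,\mathfrak a)$ is indeed computed by $\widehat{A\langle\xi\rangle}_q$ ---  are routine diagram chases using functoriality of $C_{\overline A/R}$ and the lemmas of section 4.
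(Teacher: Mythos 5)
Your proposal is correct and follows essentially the same strategy as the paper: both reduce commutativity to the facts that $F\colon (A',(p)_q)\to(A,(p)_q)$ and the completed divided frobenius $\widehat{[F]}\colon\widehat{A'\langle\omega\rangle}_{q(-1)}\to\widehat{A\langle\xi\rangle}_q$ are morphisms of $(p)_q$-prisms over $\overline A'$ compatible with the projections, and then invoke the crystal property of $E$ to transport the level $-1$ hyper-stratification to the level $0$ one, which is by definition $F^*$. The only (inessential) difference is that the paper simply asserts that $[F]$ induces such a prism morphism, while you recover the comparison map from the universal property of the prismatic envelope of the diagonal and then identify it with $\widehat{[F]}$ via the uniqueness in theorem \ref{propdivf}.
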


\begin{proof}
There exists a (vertical) morphism of $(p)_{q}$-prisms on $\overline A'/R$:
\[
\xymatrix{
A'/(p)_{q} \ar@{=}[r] \ar@{=}[d] & A'/(p)_{q} \ar[d]^{\overline F}& \ar@{->>}[l] A' \ar[d]^F\\ A'/(p)_{q} \ar[r]^{\overline F} & A/(p)_{q} & \ar@{->>}[l] A
}
\]
It follows that if $E$ is a $(p)_{q}$-prismatic crystal on $\overline A'/R$, then we have
\[
(C_{\overline A/R}^{-1}E)_{A} = E_{A} \simeq A {}_{{}_{F}\nwarrow}\!\!\otimes_{A'} E_{A'}.
\]
In the same way, the divided frobenius $[F] : A\langle \omega \rangle_{q(-1)} \to A\langle \xi \rangle_{p}$ induces a morphism of $(p)_{q}$-prisms on $\overline A'/R$.
It follows that the stratification on $E_{A'}$ comes from the $q$-crystalline structure of $C_{\overline A/R}^{-1}E$.
\end{proof}

\begin{rmks}
\begin{enumerate}
\item We proved in theorem \ref{mainthm} that the level rising map $F^*$ in the bottom of diagram \eqref{diagram} is an equivalence.
\item We will show in a forthcoming article that both vertical maps are an equivalence.
As a consequence, we will obtain for free that the upper map is also an equivalence.
\item  One may hope to prove more generally that the Cartier morphism of definition \ref{carmor} is always an equivalence of topos much as in \cite{Oyama17} or \cite{Xu19}.
\end{enumerate}
\end{rmks}

\addcontentsline{toc}{section}{References}
\printbibliography

\end{document}